\newcommand{\coloneqq}{\coloneq}
\definecolor{myteal}{RGB}{0 123 137}
\numberwithin{equation}{section}
\newtheorem{satz}{Satz}[section]
\newtheorem{theorem}[satz]{Theorem}
\newtheorem{proposition}[satz]{Proposition}
\newtheorem{corollary}[satz]{Corollary}
\newtheorem{lemma}[satz]{Lemma}
\DeclareMathOperator{\E}{{\mathbb E}}
\DeclareMathOperator{\R}{{\mathbb R}}
\DeclareMathOperator{\N}{{\mathbb N}}
\DeclareMathOperator{\PP}{{\mathbb P}}
\DeclareMathOperator{\supp}{supp}
 \DeclareMathOperator{\Id}{Id}
\DeclareMathOperator*{\argmin}{arg\,min}\DeclareMathOperator*{\argmax}{arg\,max}
\DeclareMathOperator*{\argminalt}{\overbar{arg}\,min}\DeclareMathOperator*{\argmaxalt}{\overbar{arg}\,max}
\DeclareMathOperator{\Var}{Var} \DeclareMathOperator{\Cov}{Cov}
\providecommand{\eps}{\varepsilon}
\renewcommand{\phi}{\varphi}
\renewcommand{\theta}{\vartheta}
\newcommand{\eqqcolon}{\eqcolon}
\providecommand{\abs}[1]{\lvert #1 \rvert}
\providecommand{\norm}[1]{\lVert #1 \rVert}
\providecommand{\babs}[1]{{\Bigl\lvert #1 \Bigr\rvert}}
\providecommand{\scapro}[2]{\langle #1,#2 \rangle}
\providecommand{\bscapro}[2]{\Bigl\langle #1,#2 \Bigr\rangle}
\providecommand{\ceil}[1]{\lceil #1 \rceil}
\renewcommand{\Re}{\operatorname{Re}}
\newcommand{\modelspace}{(0,1)}
\newcommand{\ko}{k_{\bullet}}
\newcommand{\koo}{k_{\bullet}^0}
\newcommand{\intv}{\mathcal J_{\tau,\delta,\eta}}
\newcommand{\e}{\mathrm{e}}
\newcommand{\one}{\bm{1}}
\newcommand{\lowell}{\underbars{\theta}}
\newcommand{\upell}{\overbar{\theta}}
\newcommand{\difflim}{\theta^\ast}
\renewcommand{\hat}{\widehat}
\renewcommand{\tilde}{\widetilde}
\newcommand{\overbar}[1]{\mkern 1.5mu\overline{\mkern-1.5mu#1\mkern-1.5mu}\mkern 1.5mu}
\renewcommand{\bar}{\overbar}
\newcommand{\underbars}[1]{\mkern 1.5mu\underline{\mkern-1.5mu#1\mkern-1.5mu}\mkern 1.5mu}
\newcommand*\diff{\mathop{}\!\mathrm{d}}
\newcommand*{\sumc}{%
  \DOTSB
  \mathop{
    \mathchoice
      {\rlap{\kern.25em{$\circleddash\scriptstyle\delta$}}{\sum}}
      {\vcenter{\rlap{\kern.2em{$\scriptstyle{\circleddash}\scriptscriptstyle{\delta}$}}}{\sum}}
      {\sum}{\sum}
  }\slimits@
}
\newcommand{\specificthanks}[1]{\@fnsymbol{#1}}
\definecolor{Purple}{RGB}{103 58 183}
\definecolor{Green}{rgb}{.13 .55 .13}
\title{\fontsize{16}{19} \selectfont Change point estimation for a stochastic heat equation}
\author{Markus Reiß\thanks{Humboldt-Universität zu Berlin, Institut für Mathematik, Unter den Linden 6, 10099 Berlin, Germany. \newline Email: \href{mailto:mreiss@math.hu-berlin.de}{mreiss@math.hu-berlin.de}} \and Claudia Strauch\thanks{Heidelberg University, Institute for Mathematics, Im Neuenheimer Feld 205, 69120 Heidelberg, Germany. \newline Email: \href{mailto:strauch@math.uni-heidelberg.de}{strauch@math.uni-heidelberg.de}} \and Lukas Trottner\thanks{University of Birmingham, School of Mathematics, Birmingham B15 2TT, UK. \newline Email: \href{mailto:l.trottner@bham.ac.uk}{l.trottner@bham.ac.uk}}}
\begin{document}
\maketitle
	
\begin{abstract}
We study a change point model based on a stochastic partial differential equation (SPDE) corresponding to the heat equation governed by the weighted Laplacian $\Delta_\vartheta = \nabla\vartheta\nabla$, where $\theta=\theta(x)$ is a space dependent diffusivity. As a basic problem, the domain $(0,1)$ is considered with a piecewise constant diffusivity with a jump at an unknown point $\tau$.
Based on local measurements of the solution in space with resolution $\delta$ over a finite time horizon, we construct a simultaneous M-estimator for the diffusivity values and the change point.
The change point estimator converges with rate $\delta$, while the diffusivity constants can be recovered with convergence rate $\delta^{3/2}$.
Furthermore, when the diffusivity parameters are known and the jump height vanishes as the spatial resolution tends to zero, we derive a limit theorem for the change point estimator and identify the limiting distribution.
For the mathematical analysis, a precise understanding of the SPDE with discontinuous $\theta$, tight concentration bounds for quadratic functionals in the solution, and a generalisation of classical M-estimators are developed.
\end{abstract}

\noindent\textit{MSC subject classifications.} Primary 60H15, 62F12; secondary 60F05. \\
\textit{Key words.} Change point detection, stochastic heat equation, local measurements.

\section{Introduction}
{\color{black} For many mathematical models developed for the quantitative analysis of real-world problems, it has proven beneficial to incorporate randomness, thereby accounting for -- to some extent hidden -- random dynamics, measurement errors and other external influences. For a whole class of highly relevant models, namely those based on partial differential equations (PDEs), the inclusion of noise (randomness) in the dynamics naturally results in considering SPDEs. While the corresponding models often describe phenomena such as diffusion or transport more adequately than their deterministic counterparts, the mathematical analysis frequently turns out to be very challenging.
Recently, statistics for SPDEs has received a lot of attention, see \cite{cial18} and the website \cite{stats4spdes} for an overview. Nonparametric estimation for SPDEs based on local measurements was introduced in \cite{altmeyer21} and successfully applied to estimate the diffusivity of actin concentration in a cell-repolarisation model \cite{altmeyer22b}. Generalisations to semilinear equations and multiplicative noise are available \cite{altmeyer23,janak23}. 
To the best of our knowledge, the problem of estimating an interface or change point in diffusivity has not yet been  treated, although this issue is critical in practical applications involving heat transfer through heterogeneous media, where abrupt changes in thermal properties often occur at material interfaces or due to phase transitions. Accurate identification of these points is essential for predicting heat distribution and optimising material design. Since real-world systems often involve random fluctuations, introducing stochastic elements into the heat equation allows for more realistic modelling of uncertainties such as thermal noise or material irregularities, making it natural to study a stochastic heat equation with discontinuous diﬀusion coeﬃcients.

The problem of identifying change points from independent observations has a long history dating back to \cite{wald45} and \cite{page54}. An exhaustive account of classical change point problems is given in \cite{csorgo97}. For an up-to-date exposition of the problem of finding univariate mean change points based on independent observations with piecewise constant mean, the reader may consult \cite{wayuri22}. In recent years, there has been a particular interest in change point analysis in more complex settings, such as high-dimensional data and data in general metric spaces. We refer to \cite{enikeeva19, li24, liu21, wang21, wang18, xu24, wang22} and \cite{dubey20} for some state-of-the-art contributions in these areas. 

We extend this line of research by studying a change point model in an infinite-dimensional setting, where we consider a parabolic SPDE, which has a piecewise constant diffusivity coefficient and is driven by space-time white noise. The methodology developed for this problem gives fundamental insights, useful for various applications, e.g., heat conduction in a medium, consisting of two different materials with unknown interface. 
Assuming that our data are given by observing the solution to the SPDE locally in space at resolution $\delta$ and continuously in time, we introduce a novel M-estimator. Our methodology employs a modified likelihood-based approach that incorporates a nuisance parameter to improve the estimation of the diffusivity parameter and the change point. 
We establish optimal convergence rates for our estimators, showing that the estimates satisfy error bounds of order $\mathcal{O}_{\PP}(\delta^{3/2})$ for the diffusivity and $\mathcal{O}_{\PP}(\delta)$ for the change point. In addition, we derive a limit theorem for the change point estimator, illustrating its behaviour under a faint signal condition given by a vanishing jump height in the diffusivity. These contributions not only provide a better understanding of change point detection in SPDEs but also could serve as a starting point for developing robust tools for practical applications in the analysis of heat conduction through materials with unknown interfaces.

The paper is organised as follows.
The rigorous statement of the basic SPDE model and the change point problem are given in Section \ref{sec:back}. In particular, we detail the observation scheme and discuss both the existence of a weak solution to the SPDE and key probabilistic properties required for the statistical analysis. Section \ref{sec:est} motivates the estimation approach with a discussion of a related problem in a Gaussian signal plus white noise model, which serves as a bridge to the subsequent statistical analysis.
Section \ref{sec:main} derives our estimators and provides a basic insight into their concentration analysis.
Section \ref{sec:anaconsconv} addresses the non-vanishing jump height regime and contains our main result on convergence rates and consistency statements, while Section \ref{sec:clt} provides the change point limit theorem in the vanishing jump height regime, along with the technical tools and its proof. 
Section \ref{sec:disc} contains the discussion and conclusion, summarising our contributions, comparing our results with existing literature, and giving an outline of issues for future research.
While the main steps in the proofs are presented together with the results, all more technical arguments are delegated to the Appendix.
More specifically, Appendix \ref{app:proof_ana} and Appendix \ref{app:proof_conc} collect analytical results and proofs related to the concentration analysis, respectively.
Appendix \ref{app:proof_rem} contains the remaining proofs for the results given in Section \ref{sec:est}, while Appendix \ref{app:proof_clt} collects the proofs for Section \ref{sec:clt}.}

\section{Background}\label{sec:back}
As a basic model,  we consider the weighted Laplace operator $\Delta_\vartheta=\nabla\theta\nabla$ on $(0,1)$ (i.e., $\partial_x\theta\partial_x$) with Dirichlet boundary conditions  and diffusivity
\begin{equation} \label{eq:theta}
	\vartheta(x)  =\vartheta_-{\mathbf 1}_{(0,\tau)}(x)+\vartheta_+{\mathbf 1}_{[\tau,1)}(x),\quad x\in (0,1),
\end{equation}
where $\tau\in (0,1)$,
\begin{equation}\label{eq:unif_ell}
	\theta_- , \theta_+ \in [\lowell, \upell] \text{ for some } \lowell, \upell\in (0,\infty),
\end{equation}
and the operator is understood in the distributional sense at $x = \tau$, see Section \ref{sec:general} below. 
Our interest is in estimating the change point $\tau$ as well as the diffusivity constants $\theta_+,\theta_-$ from observing the corresponding SPDE
\begin{equation} \label{eq:spde}
	\begin{cases} \diff X(t)=\Delta_\vartheta X(t) \diff t+ \diff W(t), & t\in (0,T]\\ X(0)=X_0, &\\ X(t)\vert_{\{0,1\}} = 0,& t \in (0,T]\end{cases}
\end{equation}
with space-time white noise $\dot W$, i.e., $W = (W_t)_{t \in [0,T]}$ is a cylindrical Brownian motion on $L^2(\modelspace)$, and deterministic initial condition $X_0\in C([0,1])$.

\subsection{Observation scheme}
Our data are given by observing the solution to \eqref{eq:spde} locally in space and continuously in time. More precisely, we adopt the local observation scheme introduced in \cite{altmeyer21} and observe at $n$ equidistant points $x_i=(i-1/2)/n$, $i=1,\ldots,n$, the local averages
\begin{equation} \label{eq:locpro}
	X_{\delta,i}(t) \coloneqq \scapro{X(t)}{K_{\delta,i}}\quad \text{ and }\quad X_{\delta,i}^\Delta(t) \coloneqq \scapro{X(t)}{\Delta K_{\delta,i}}\quad \text{ for } t\in[0,T],
\end{equation}
for some localised kernel functions $K_{\delta,i}(x)=\delta^{-1/2}K(\delta^{-1}(x-x_i))$, $i=1,\ldots,n$, with $\delta>0$ and a smooth function $K$, satisfying $\supp(K)\subset[-1/2,1/2]$, $\norm{K}_{L^2}=1$.
An illustration of the observation scheme is given in Figure \ref{fig:obs}. The kernels $K_{\delta,i}$ model for instance the \textit{point-spread function} in microscopy, see  \cite{altmeyer22b} for a concrete application to SPDE activator-inhibitor models in cell motility. For the resolution level, $\delta=n^{-1}$ is assumed so that the observations $X_{\delta,i}(t)$ form non-overlapping local averages of the solution.
We already emphasise that $X_{\delta,i}(t)$ will nevertheless be correlated due to the global SPDE dynamics, which will require a precise analysis in the sequel.
Throughout the paper, we allow the diffusivity parameters to vary with $\delta$, i.e., we consider \eqref{eq:theta} with $\vartheta_\pm=\vartheta_\pm(\delta)$ obeying the bounds \eqref{eq:unif_ell} uniformly in $\delta$.
We shall study convergence rates and asymptotic distributions of our estimators in the asymptotic regime $\delta\to 0$, that is, $n\to\infty$, keeping the observation time $T$ fixed. Quantities possibly depending on $\delta$ will then often be denoted by an index $\delta$.
\begin{figure}[h]
\centering
\includegraphics[scale = 0.9]{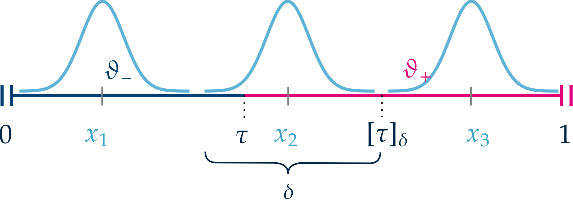}
\caption{Illustration of the observation scheme for three local observations represented by kernel graphs in light blue. $[\tau]_\delta$ denotes the best upper approximation of $\tau$ on the $\delta$-grid.}
\label{fig:obs}
\end{figure}\vspace*{-1cm}

\subsection{General setting}\label{sec:general}
Given observations of the SPDE \eqref{eq:spde}, our interest is in estimating the parameters characterising the diffusivity \eqref{eq:theta}.
This section provides the basis for our subsequent investigations by proving the existence of a weak solution of \eqref{eq:spde} and discussing some basic probabilistic properties that will be needed for the estimation approach.

Recall that an $L^2(\modelspace)$-valued and predictable process $(X_t)_{t \in [0,T]}$ is called \emph{weak solution} of \eqref{eq:spde} if $t \mapsto X(t)$ is $\PP$-a.s.\ Bochner integrable and if, for all $z \in D(\Delta_\theta^\ast)$ and all $t \in [0,T]$,
\begin{equation}\label{eq:weaksol}
	\scapro{X(t)}{z} = \scapro{X_0}{z} + \int_0^t \scapro{X(s)}{\Delta_\theta^\ast z} \diff{s} + \scapro{W(t)}{z}, \quad \PP\text{-a.s.},
\end{equation}
where $\Delta^\ast_\theta$ denotes the adjoint of $\Delta_{\theta}$ and $D(\Delta^\ast_\theta)$ is its domain. Let us start by recalling some essential facts on the divergence-form operator $\Delta_\vartheta$ from the literature that are rooted in the theory of Dirichlet forms (cf.\ \cite{fukushima11}).
Denote by $\lambda$ the Lebesgue measure on $(\modelspace, \mathcal{B}(\modelspace))$, and let $L^2(\modelspace)$ be the Hilbert space of square-integrable functions on $\modelspace$, equipped with the scalar product $\langle u,v \rangle = \int_{\modelspace} uv \diff{\lambda}$.
Denote by $H^k(\modelspace)$ the $L^2$-Sobolev spaces for $k \in \N$, and let $H^1_0(\modelspace)$ be the closure of $C_c^\infty(\modelspace)$ in $H^1(\modelspace)$.
The elliptic divergence-form operator
\[\begin{cases} D(\Delta_\vartheta) = \{u \in H^1_0(\modelspace): \Delta_\vartheta u \in L^2(\modelspace)\},\\
	\Delta_\vartheta u = \nabla \vartheta \nabla u,
\end{cases} \]
is induced by the Dirichlet form
\[\begin{cases} D(\mathcal{E}_\vartheta) =  H^1_0(\modelspace),\\
	\mathcal{E}_\vartheta(u,v) = \int_{\modelspace} \vartheta \nabla u \nabla v \diff{\lambda},
\end{cases} \]
via the relation
\[\mathcal{E}_\vartheta(u,v) = -\langle \Delta_\vartheta u,v \rangle, \quad (u,v) \in D(\Delta_\vartheta) \times D(\mathcal{E}_\vartheta).\]
Because of $\theta(x) \geq \lowell > 0$, $(\Delta_\vartheta, D(\Delta_\vartheta))$ is a negative definite self-adjoint operator on $L^2(\modelspace)$ that generates a strongly continuous symmetric semigroup $(S_\vartheta(t))_{t \in [0,T]}$  on $L^2(\modelspace)$ via $S_\vartheta(t) = \exp(t \Delta_\vartheta )$, cf.\ \cite[Theorem 2.1]{etore06}.
For each $t> 0$, $S_\vartheta(t)$ has an $L^2(\modelspace \times \modelspace)$ density kernel or Green function $p^\vartheta_t$ satisfying the classical off-diagonal Aronson estimate \cite{aronson68},
\[
p^\vartheta_t(x,y) \leq \frac{c_{\lowell,\kappa}}{\sqrt{t}}\exp\Big(-\frac{\lvert x - y \rvert^2}{4(1+\kappa)\upell t} \Big), \quad t > 0,\ x,y \in \modelspace,
\]
where $\kappa > 0$ can be  chosen arbitrarily, and $c_{\lowell,\kappa}$ is a constant that depends only on $\kappa$ and $\lowell$, cf.\ \cite[Corollary 3.2.8]{davies90} for the near optimal constants above.
For any $t > 0$, the kernel $p^\vartheta_t$ is square-integrable and thus $S_\vartheta(t)$ is a Hilbert--Schmidt operator, \cite[Section A.6]{bakry14}.
This implies that $S_{\vartheta}(t)$ has a discrete spectrum $\sigma(S_\vartheta(t))$ and, since $\exp(-t\sigma(-\Delta_\vartheta)) \subset \sigma(S_\vartheta(t))$, see, e.g., \cite[Theorem IV.3.6]{engel00}, it follows that the spectrum of $-\Delta_\vartheta$ is discrete as well.
Therefore, we may choose an orthonormal basis $(e_k)_{k \in \N}$ of $L^2(\modelspace)$ consisting of eigenvectors of $-\Delta_{\theta}$ with corresponding non-negative sequence of eigenvalues $(\lambda_k)_{k \in \N}$.
In fact, $(\lambda_k)_{k \in \N}$ is bounded from below by some $\underbars{\lambda} > 0$, depending only on the parameter $\lowell$.
To see this, note that, for any $u \in D(\Delta_\theta)$, by Poincaré's inequality on the bounded domain $(0,1)$,
\[\langle -\Delta_\theta u, u \rangle \geq \lowell \lVert \nabla u \rVert^2 \geq c\lowell \lVert u \rVert^2,\]
for some constant $c > 0$. We may therefore take $\underbars{\lambda} = c\lowell$ as a universal constant.
In particular, $\Delta_\theta^{-1}$ exists as a bounded linear operator from $L^2((0,1))$ to $D(\Delta_\theta)$, and it holds $\lVert \Delta_\theta^{-1} \rVert \leq \underbars{\lambda}^{-1}$.
We apply functional calculus to $\Delta_\theta$. To this end, we note that, for any measurable function $\psi$ on $\R_+$, the operator $\psi(-\Delta_\vartheta)$ is given by
\begin{equation}
	\begin{split}\label{eq:transform}
		\psi(-\Delta_\theta)z&= \sum_{k\in \N} \psi(\lambda_k)\langle e_k,z \rangle e_k, \\
		& \quad z \in D(\psi(-\Delta_\vartheta)) = \big\{z \in L^2(\modelspace): \sum_{k \in \N} \psi(\lambda_k)^2 \langle e_k,z \rangle^2 < \infty \big\}.
	\end{split}
\end{equation}
The following basic result clarifies the existence of a solution and provides a representation of the measurement process $X_{\delta,i}$ introduced in \eqref{eq:locpro}.
Note that the functions $K_{\delta,i}$ are part of the observation scheme, and observe that $\supp (K_{\delta,i})\cap\supp(K_{\delta,j})=\varnothing$ holds for all $i\not=j$, i.e., the local observation windows do not overlap. 
Let us also define $\ko =\ko(\delta) \coloneqq \lceil \tau/\delta \rceil$, implying that the change point $\tau\in {\supp (K_{\delta,\ko})}$.

\begin{proposition} \label{prop:weaksol}
	The unique weak solution of \eqref{eq:spde} is given by the mild solution process
	\begin{equation}\label{eq:mild}
		X(t) = S_\theta(t)X_0 + \int_0^t S_{\theta}(t-s) \diff{W_s}, \quad t \in [0,T].
	\end{equation}
	Moreover, for any $i =1,\ldots,n$ and $t \in [0,T]$, we have the $\PP$-a.s.\ representations
	\begin{equation}\label{eq:mild2}
		X_{\delta,i}(t)=
		\begin{cases}
			\langle X_0, K_{\delta,i} \rangle + \int_0^t \vartheta_-(\delta) X_{\delta,i}^\Delta(s) \diff{s}+ B_{\delta,i}(t),& \text{ if } x_i+\delta/2\le\tau,\\
			\langle X_0, K_{\delta,i} \rangle + \int_0^t \vartheta_+(\delta) X_{\delta,i}^\Delta(s)\diff{s}+  B_{\delta,i}(t),& \text{ if } x_i-\delta/2\ge\tau,\\
			\langle  S_\theta(t) X_0, K_{\delta,i} \rangle + \int_0^t \int_0^s \scapro{\Delta_\theta S_\vartheta(s-u) K_{\delta,i}}{\diff W(u)} \diff{s} + B_{\delta,\ko}(t),& \text{ if }\abs{x_i-\tau}<\delta/2,\end{cases}
	\end{equation}
	where $(B_{\delta,i})_{i=1,\ldots,n}$ with $B_{\delta,i}(\cdot) = \langle W(\cdot), K_{\delta,i} \rangle$ is a vector of independent scalar Brownian motions. 
\end{proposition}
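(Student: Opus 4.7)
The plan is first to establish that the mild process in \eqref{eq:mild} defines a weak solution of \eqref{eq:spde}, and then to derive the three representation formulas \eqref{eq:mild2} by exploiting the support properties of the kernels $K_{\delta,i}$. Well-definedness of the stochastic convolution $\int_0^t S_\theta(t-s)\diff W(s)$ in $L^2(\modelspace)$ follows from the Hilbert--Schmidt bounds on $S_\theta(r)$ provided by the Aronson estimate, which yield $\int_0^t \lVert S_\theta(r)\rVert_{HS}^2 \diff r < \infty$. To verify \eqref{eq:weaksol}, I would test the mild expression against an arbitrary $z\in D(\Delta_\theta^\ast) = D(\Delta_\theta)$ (self-adjointness), use the generator identity $\frac{\mathrm d}{\mathrm d s} S_\theta(s) z = \Delta_\theta S_\theta(s) z$ in $L^2(\modelspace)$, and invoke a (stochastic) Fubini argument to interchange the time integrals with the inner product against $\diff W(s)$. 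Uniqueness is standard: the difference of two weak solutions satisfies a deterministic linear evolution equation with zero data, hence vanishes.

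For the first two cases in \eqref{eq:mild2}, observe that whenever $\tau\notin\supp(K_{\delta,i})$, the diffusivity $\theta$ is constant on an open neighbourhood of $\supp(K_{\delta,i})$. Hence $\nabla(\theta\nabla K_{\delta,i}) = \theta_\pm \Delta K_{\delta,i}$ pointwise, and since $K_{\delta,i}\in C_c^\infty(\modelspace)$ for small $\delta$ one has $K_{\delta,i}\in D(\Delta_\theta)$. Substituting $z=K_{\delta,i}$ in \eqref{eq:weaksol} and recalling the definition of $X_{\delta,i}^\Delta$ in \eqref{eq:locpro} yields the claimed identity with $B_{\delta,i}(t) = \langle W(t), K_{\delta,i}\rangle$.

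The boundary case is the delicate one: $\theta\nabla K_{\delta,\ko}$ has a jump at $\tau$, so $K_{\delta,\ko}\notin D(\Delta_\theta)$ and testing \eqref{eq:weaksol} against $K_{\delta,\ko}$ is not permitted. Instead, I would work directly with the mild solution and exploit the identity
\begin{equation*}
S_\theta(t-s)K_{\delta,\ko} = K_{\delta,\ko} + \int_s^t \Delta_\theta S_\theta(v-s) K_{\delta,\ko} \diff v,
\end{equation*}
valid because $S_\theta(r)K_{\delta,\ko}\in D(\Delta_\theta)$ for every $r>0$ by the smoothing property of the analytic semigroup. Pairing with $\diff W(s)$, applying stochastic Fubini to exchange the order of integration on $\{0\le s \le v \le t\}$, and relabelling $(v,s)$ as $(s,u)$ yields the stated double-integral expression. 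Independence of $(B_{\delta,i})_{i=1,\ldots,n}$ is then immediate from the cylindrical Brownian motion structure of $W$ together with the orthogonality $\langle K_{\delta,i}, K_{\delta,j}\rangle = 0$ induced by disjoint supports; the normalisation $\lVert K \rVert_{L^2} = 1$ ensures that each $B_{\delta,i}$ is a standard Brownian motion.

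The main technical obstacle I anticipate is the rigorous application of stochastic Fubini in the boundary case, which needs $L^2$-integrability of $r\mapsto \lVert \Delta_\theta S_\theta(r) K_{\delta,\ko}\rVert^2$ on $[0,t]$. I would extract this from the functional calculus \eqref{eq:transform} for $\Delta_\theta$, combined with the analytic-semigroup bound $\lVert \Delta_\theta S_\theta(r)\rVert \lesssim r^{-1}$ and a spectral-tail estimate on the expansion coefficients of $K_{\delta,\ko}$ in the eigenbasis $(e_k)_{k\in\N}$.
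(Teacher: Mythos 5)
Your plan follows essentially the same route as the paper's: mild $=$ weak is established via Aronson/Hilbert--Schmidt bounds and a citation to the standard SPDE existence result; the first two cases follow by testing against $K_{\delta,i}\in D(\Delta_\theta)$; and the boundary case is handled through the semigroup identity $S_\theta(t-s)z - z = \int_s^t \Delta_\theta S_\theta(v-s)z\,\mathrm{d}v$ together with stochastic Fubini (this is exactly what the paper's auxiliary Lemma~\ref{lem:mildweak} does for general $z\in H^1_0$).

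One cautionary note on the final technical step. The operator-norm bound $\lVert\Delta_\theta S_\theta(r)\rVert\lesssim r^{-1}$, which you cite, would only yield $\lVert\Delta_\theta S_\theta(r)K_{\delta,\ko}\rVert^2\lesssim r^{-2}$, which is \emph{not} integrable near $r=0$, so that bound alone cannot close the Fubini argument. The estimate the paper actually uses is of $H^{1/2}$-type: by spectral calculus,
\[
\int_0^t\lVert\Delta_\theta S_\theta(s)z\rVert^2\,\mathrm{d}s=\sum_{k}\lambda_k^2\langle e_k,z\rangle^2\int_0^t \mathrm{e}^{-2\lambda_k s}\,\mathrm{d}s\le\tfrac12\lVert(-\Delta_\theta)^{1/2}z\rVert^2,
\]
which is finite because $K_{\delta,\ko}\in H^1_0(\modelspace)=D((-\Delta_\theta)^{1/2})=D(\mathcal E_\theta)$. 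Equivalently, the sharp smoothing rate you want is $\lVert\Delta_\theta S_\theta(r)z\rVert\lesssim r^{-1/2}\lVert(-\Delta_\theta)^{1/2}z\rVert$, not the generic $r^{-1}\lVert z\rVert$. Your phrase ``spectral-tail estimate on the expansion coefficients'' points in the right direction, but you should be explicit that the crucial input is $K_{\delta,\ko}\in H^1_0$, since the naive analytic-semigroup bound by itself fails at the origin.
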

\begin{proof}
	See Appendix \ref{app:proof_ana}.
\end{proof}
In the following, we turn to our main question of change point estimation from observations \eqref{eq:locpro}.
The analysis in \cite{altmeyer21,altmeyer22} shows that the contribution of the initial condition to the statistics is asymptotically negligible. To avoid lengthy additional calculations without new structural insights, we consider a zero initial condition  $X_0 \equiv 0$ in the sequel.
Furthermore, to shorten notation, we will frequently use the convention $[n]\coloneqq \{1,\ldots,n\}$, $n\in\N$.

\subsection{Motivation: A related model problem}\label{sec:est}
To motivate our statistical approach and as a benchmark for our later results, we briefly discuss the situation in a simpler Gaussian signal plus white noise model that is treated in \cite[Chapter VII, Section 2]{ibra81}. For a related discussion of spatial change point estimation in time-homogeneous SDE models, we refer to \cite[Chapter 3, Section 4]{kuto04}.
Assume that we observe
\[ \diff Y(x)=\vartheta(x)\diff x+\sigma(x)\diff B(x),\quad x\in[0,1],\]
with unknown $\vartheta$ of the form \eqref{eq:theta}, a known space-dependent noise level $\sigma\in L^2((0,1))$, and a scalar Brownian motion $B$.
The log-likelihood with respect to Brownian motion is given by
\begin{align*}
		\ell(\vartheta_-,\vartheta_+,\tau)&=\vartheta_-\int_0^\tau \sigma^{-2}(x)\diff Y(x)-\frac{\vartheta_-^2}2 \int_0^\tau \sigma^{-2}(x)\diff x \\
		&\quad +\vartheta_+\int_\tau^1 \sigma^{-2}(x)\diff Y(x) -\frac{\vartheta_+^2}2 \int_\tau^1 \sigma^{-2}(x)\diff x.
\end{align*}
The MLE $(\hat\vartheta_-,\hat\vartheta_+,\hat\tau)$ exists and yields the change point estimator
\[
	\hat\tau=\argmax_\tau \bigg\{\frac{(\int_0^{\tau} \sigma^{-2}(x)\diff Y(x))^2}{\int_0^{\tau} \sigma^{-2}(x)\diff x} +\frac{(\int_{\tau}^1 \sigma^{-2}(x)\diff Y(x))^2}{\int_{\tau}^1 \sigma^{-2}(x)\diff x}\bigg\}.
\]
In the case where $\vartheta_-,\vartheta_+$ are known and assuming $\eta\coloneqq \vartheta_+-\vartheta_->0$, we can subtract $\ell(\cdot,\cdot,\tau^0)$ from $\ell$, resulting in
\[
\hat\tau=\argmax_\tau \bigg\{(\vartheta_+-\vartheta_-)\int_\tau^{\tau^0}
	\sigma^{-2}(x)\diff Y(x)-\frac{\vartheta_+^2-\vartheta_-^2}{2}\int_\tau^{\tau^0}\sigma^{-2}(x)\diff x\bigg\}.
\]
Analysing this estimator under the true $\tau^0$, we insert the specification of $\diff Y$ and obtain
\[
\hat\tau =
	\argmax_\tau \bigg\{\int_{\tau\wedge\tau^0}^{\tau\vee \tau^0}\frac{\eta}{\sigma(x)}\diff B(x)-
	\frac{1}{2}\int_{\tau\wedge\tau^0}^{\tau\vee\tau^0}\frac{\eta^2}{\sigma(x)^2}\diff x\bigg\}.
	\]
	For the homoskedastic case $\sigma(x)=\delta n^{-1/2}$, scaling properties of Brownian motion show the identity in law
	\[\eta^2\delta^{-2}n(\hat\tau-\tau^0)\stackrel{\mathrm{d}}{=} \argmax_{h\in [-\eta^2\delta^{-2} n \tau^0 , \eta^2\delta^{-2} n(1-\tau^0)]}\bigg(B^\leftrightarrow(h)-\frac{\abs{h}}{2}\bigg) \overset{\mathrm{a.s.}}{\longrightarrow} \argmax_{h\in\R} \bigg(B^\leftrightarrow(h)-\frac{\abs{h}}{2}\bigg),\]
	provided $\delta n^{-1/2} = o(\eta)$, where $B^\leftrightarrow$ is a two-sided Brownian motion. 
	In particular, for $n=\delta^{-1}$, we obtain  convergence with rate $v_{\delta,\eta}=\eta^{-2}\delta^3$ for $\hat\tau$ in $\delta$ and $\eta$, provided $\delta^{3/2}=o(\eta)$.

\section{Proposed method and its convergence analysis}\label{sec:main}
We now turn to the simultaneous estimation of the true model parameters $(\theta_-^0(\delta),\theta_+^0(\delta), \tau^0)$ characterising \eqref{eq:theta} and \eqref{eq:spde}, marked with a superscript $0$ in this section.
In particular, the definition of the index $\ko$ implies that $\koo=\lceil \tau^0/\delta\rceil$.
For $i,k \in \{1,\ldots, \delta^{-1}\}$ and for given $(\theta_-,\theta_+,\theta_\circ) \in \Theta \coloneqq[\lowell,\upell]^3$, define
\begin{equation}\label{eq:deftdi}\theta_{\delta,i}(k) \coloneqq
\begin{cases}\theta_-, &\text{if } i < k, \\ \theta_{\circ}, &\text{if } i =k, \\ \theta_+, &\text{if } i > k,\end{cases} \quad\text{ and } \quad \theta^0_{\delta,i}  \coloneqq  \begin{cases}\theta_-^0(\delta), &\text{if } i < \koo, \\ \theta^0_\circ(\delta), &\text{if } i = \koo,\\ \theta_+^0(\delta), &\text{if } i > \koo, \end{cases}
\end{equation}
where $\theta^0_\circ(\delta)$, specified in Proposition \ref{lem:remainder} below, minimises the error induced by the constant approximation of the discontinuous diffusivity in the modified log-likelihood on the change point interval. As a consequence, the expectation of a remainder term appearing in a convenient representation of our estimator and that originates from this discontinuity is of a smaller order than its $L^1$-norm. This will prove to be decisive for obtaining sharp diffusivity estimation rates.
{\color{black}
The simultaneous estimator for the model parameters is introduced in Section \ref{sec:defbasic}, followed by the analysis of its convergence properties, in which we consider two different regimes: In Section \ref{sec:anaconsconv}, we prove consistency and establish convergence rates for the case of non-vanishing jump height, while Section \ref{sec:clt} investigates weak convergence properties of the change point estimator (adapted accordingly to the simplifying assumption that $\theta^0_\pm$ are known) in the vanishing jump height regime.}

\subsection{Definition of the estimator}\label{sec:defbasic}
Consider the modified $\log$-likelihood $\ell_{\delta,i}(\theta_-,\theta_+,\theta_\circ,k)$, based on the local observation process  $(X_{\delta,i}(t), X^\Delta_{\delta,i}(t))_{t \geq 0}$ introduced in \eqref{eq:locpro} and given by
\begin{equation}\label{eq:loglike}
	\ell_{\delta,i}(\theta_-,\theta_+,\theta_\circ,k) \coloneqq \theta_{\delta,i}(k) \int_0^T X^{\Delta}_{\delta,i}(t) \diff{X_{\delta,i}(t)} - \frac{\theta_{\delta,i}(k)^2}{2} \int_0^T X^\Delta_{\delta,i}(t)^2 \diff{t}.
\end{equation}
A detailed discussion of the motivation behind this modified local log-likelihood is contained in \cite[Section 4.1]{altmeyer21}.
Note here that the stochastic integrals are well-defined by the semimartingale nature of $X_{\delta,i}$, even at the change point, cf.\ Proposition \ref{prop:weaksol}.
Based on these functionals, we follow a modified likelihood approach which yields an M-estimator.
For the vector $(B_{\delta,i})_{i=1,\ldots,n}$ of independent Brownian motions as introduced in Proposition \ref{prop:weaksol}, define
\begin{equation}\label{eq:MI}
	M_{\delta,i} \coloneqq \int_0^T X^{\Delta}_{\delta,i}(t) \diff{B_{\delta,i}(t)}, \qquad I_{\delta,i} \coloneqq \int_0^T X^{\Delta}_{\delta,i}(t)^2 \diff{t}, \quad i = 1,\ldots,\delta^{-1},
\end{equation}
and note that $M_{\delta,i}$ is a continuous martingale in $T$ with quadratic variation $I_{\delta,i}$.
In our subsequent investigation, we want to exploit these structures; hence, as in the model considered in Section \ref{sec:est}, it will be convenient to rewrite \eqref{eq:loglike}.
Using Proposition \ref{prop:weaksol}, we obtain that
\begin{align*}
	\ell_{\delta,i}(\theta_-,\theta_+,\theta_\circ,k) &= (\theta_{\delta,i}(k)\theta_{\delta,i}^0 - \theta_{\delta,i}(k)^2/2) I_{\delta,i} + \theta_{\delta,i}(k) M_{\delta,i}\\
	&\quad + \one_{\{i = \koo\}}  \theta_{\delta,\koo}(k)R_{\delta,\koo}(\theta^0_\circ(\delta)),
\end{align*}
where, for $\theta' \in [\lowell,\upell]$, we define
\begin{equation}\label{eq:remain}
	\begin{split}
		R_{\delta,\koo}(\theta^\prime)
		&\coloneqq \int_0^T X^{\Delta}_{\delta,\koo}(t) \Big(\int_0^t \langle \Delta_{\vartheta^0} S_{\vartheta^0}(t-s)K_{\delta,\koo} - \theta^\prime S_{\theta^0}(t-s)\Delta K_{\delta,\koo},\diff{W_s} \rangle \Big)  \diff{t}.
	\end{split}
\end{equation}
For a continuous function $f\colon \Theta \times [n]\to\R$, let $
\argmaxalt_{\chi \in \Theta \times [n]} f(\chi)$ be a measurable version of a maximiser of $f$, and also define $\argminalt_{\chi \in \Theta \times [n]} f(\chi) \coloneqq \argmaxalt_{\chi \in \Theta \times [n]}( -f(\chi))$. 
We introduce the estimator
\begin{align*}
	(\hat{\theta}_-^\delta,\hat{\theta}_+^\delta, \hat{\theta}^\delta_\circ, \hat{k}^\delta)
	&\coloneqq \argmaxalt_{(\theta_-,\theta_+,\theta_\circ,k) \in \Theta \times [n]} \sum_{i=1}^n \ell_{\delta,i}(\theta_-,\theta_+,\theta_\circ,k) \\
	&=\argmaxalt_{(\theta_-,\theta_+,\theta_\circ,k) \in \Theta \times [n]} \Big\{ \sum_{i=1}^n \Big((\theta_{\delta,i}(k) - \theta_{\delta,i}^0)M_{\delta,i} - \frac{1}{2}(\theta_{\delta,i}(k) - \theta_{\delta,i}^0)^2 I_{\delta,i} \Big)\\
	& \qquad\qquad\qquad\quad + \theta_{\delta,\koo}(k) R_{\delta,\koo}(\theta_\circ^0(\delta))
	+ \sum_{i=1}^n \theta_{\delta,i}^0M_{\delta,i} +  \sum_{i=1}^n \frac{(\theta_{\delta,i}^0)^2}{2} I_{\delta,i}\Big\}\\
	&= \argmaxalt_{(\theta_-,\theta_+,\theta_\circ,k) \in \Theta \times [n]} \Big\{ \sum_{i=1}^n \Big((\theta_{\delta,i}(k) - \theta_{\delta,i}^0)M_{\delta,i} - \frac{1}{2}(\theta_{\delta,i}(k) - \theta_{\delta,i}^0)^2 I_{\delta,i} \Big) \\
	& \qquad\qquad\qquad\quad + \theta_{\delta,\koo}(k) R_{\delta,\koo}(\theta_\circ^0(\delta))\Big\}\\
	&=\argminalt_{(\theta_-,\theta_+,\theta_\circ,k) \in \Theta \times [n]} \big\{Z_\delta(\theta_-,\theta_+,\theta_\circ,k) -\theta_{\delta,\koo}(k) R_{\delta,\koo}(\theta_\circ^0(\delta))\big\},
\end{align*}
where
\[Z_\delta(\theta_-,\theta_+,\theta_\circ,k) \coloneqq \frac{1}{2}\sum_{i=1}^{n} (\theta_{\delta,i}(k) - \theta_{\delta,i}^0)^2 I_{\delta,i} - \sum_{i=1}^n (\theta_{\delta,i}(k) - \theta_{\delta,i}^0)M_{\delta,i}.
\]
The following result summarises important estimates on the remainder term $R_{\delta,\koo}$.

\begin{proposition} \label{lem:remainder}
For any $\theta^\prime \in [\lowell,\upell]$, $R_{\delta,\koo}(\theta^\prime)$ given by \eqref{eq:remain} satisfies
\[\E[\lvert R_{\delta,\koo}(\theta^\prime) \rvert] \lesssim \delta^{-2}  \quad  \text{and} \quad  \mathrm{Var}(R_{\delta,\koo}(\theta^\prime)) \lesssim \delta^{-2}.\]
In case $\theta' = \theta^0_-(\delta)$, we have an explicit bound in terms of the jump size $\eta$, namely
\[\E[\lvert R_{\delta,\koo}(\theta^0_-(\delta)) \rvert] \leq \frac{T}{\sqrt{2}\lowell}\lVert K^\prime \rVert^2_{L^2} \lvert \eta \rvert \delta^{-2}.\]
Moreover, for any $\delta \in 1/\N$, there exists $\theta^0_\circ(\delta) \in [\lowell,\upell]$ such that
\[\lvert \E[R_{\delta,\koo}(\theta^0_\circ(\delta))]\rvert \lesssim \delta^{-1}.\]
\end{proposition}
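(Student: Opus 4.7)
The approach hinges on an algebraic identity that removes the apparent distributional Dirac in $\Delta_{\theta^0}K_{\delta,\koo}$. Indeed, interpreting the Leibniz rule distributionally, $\Delta_{\theta^0}K_{\delta,\koo} = \theta^0\Delta K_{\delta,\koo} + \eta\,\partial_x K_{\delta,\koo}(\tau^0)\delta_{\tau^0}$, and this Dirac exactly cancels once the integrand in \eqref{eq:remain} is recast via $\Delta_{\theta^0}S_{\theta^0} = S_{\theta^0}\Delta_{\theta^0}$ as $S_{\theta^0}(t-s)\partial_x\psi$ with
\[
\psi \coloneqq (\theta^0-\theta')\partial_xK_{\delta,\koo}\in L^2(\modelspace),\qquad \|\psi\|_{L^2}\le(\upell-\lowell)\delta^{-1}\|K'\|_{L^2},
\]
improving to $\|\psi\|_{L^2}\le|\eta|\delta^{-1}\|K'\|_{L^2}$ for $\theta'=\theta_-^0(\delta)$, where $\theta^0-\theta_-^0 = \eta\one_{[\tau^0,1)}$. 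Writing $R_{\delta,\koo}(\theta') = \int_0^T X^\Delta_{\delta,\koo}(t)Y(t)\diff t$ with $Y(t):=\int_0^t\scapro{S_{\theta^0}(t-s)\partial_x\psi}{\diff W_s}$, the first-moment bound follows from Cauchy--Schwarz combined with Itô's isometry. Two Hilbert-space inequalities close the estimate: the Dirichlet-form coercivity $(-\Delta_{\theta^0})^{-1}\le\lowell^{-1}(-\Delta)^{-1}$ together with the identity $(-\Delta)^{-1}\Delta K_{\delta,\koo}=-K_{\delta,\koo}$ (valid since $K_{\delta,\koo}$ is compactly supported in $\modelspace$) gives $\E[X^\Delta_{\delta,\koo}(t)^2]\le\tfrac{1}{2\lowell}\delta^{-2}\|K'\|_{L^2}^2$, while the contractivity of $(-\Delta)^{-1/2}\partial_x$ on $L^2(\modelspace)$, visible in the sine ONB diagonalising the Dirichlet Laplacian, yields $\E[Y(t)^2]\le\tfrac{1}{2\lowell}\|\psi\|_{L^2}^2$. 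Substituting the two bounds on $\|\psi\|$ produces both $\E[|R|]\lesssim\delta^{-2}$ and the explicit $\eta$-dependent constant.

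For the variance, I would use that $R$ is a second-chaos Wiener functional. The Isserlis--Wick formula applied to $\E[X^\Delta(t)Y(t)X^\Delta(t')Y(t')]$ yields three pairings, one of which reproduces $\E[R]^2$ after integration, so that $\Var(R) = \trace(C_{XX}C_{YY})+\trace(C_{XY}^2)$ in terms of the covariance operators on $L^2([0,T])$. A Young-type argument in the spectral representation produces the crucial operator-norm bound $\|C_{XX}\|_{\mathrm{op}}\le\|(-\Delta_{\theta^0})^{-1}\Delta K_{\delta,\koo}\|^2\le\lowell^{-2}$, which is \emph{independent of $\delta$}. Combined with the trace estimates $\trace(C_{XX}),\trace(C_{YY})\lesssim\delta^{-2}$ obtained by integrating the preceding pointwise bounds in time, the standard operator-trace inequalities $\trace(AB)\le\|A\|_{\mathrm{op}}\trace(B)$ and (via the decomposition $C_{XY}=AB^*$ with $C_{XX}=AA^*$, $C_{YY}=BB^*$) $\|C_{XY}\|_{\mathrm{HS}}^2\le\|C_{XX}\|_{\mathrm{op}}\trace(C_{YY})$ deliver $\Var(R)\lesssim\delta^{-2}$.

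Finally, since $\psi$ depends affinely on $\theta_\circ$, Itô's isometry produces the affine representation $\E[R_{\delta,\koo}(\theta_\circ)] = a(\delta) - \theta_\circ b(\delta)$ with $b(\delta) = \tfrac{1}{2}\int_0^{2T}\|S_{\theta^0}(r/2)\Delta K_{\delta,\koo}\|^2(T-r/2)_+\diff r\gtrsim T\delta^{-2}$. The natural candidate for $\theta^0_\circ(\delta)$ is the $[\lowell,\upell]$-projection of the zero $a(\delta)/b(\delta)$, so that $|\E[R(\theta^0_\circ(\delta))]| = b(\delta)\cdot\dist(a/b,[\lowell,\upell])$. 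Decomposing the $\theta_\circ$-independent part $\psi_0 = \theta^0_-\one_{(0,\tau^0)}\partial_xK_{\delta,\koo}+\theta^0_+\one_{[\tau^0,1)}\partial_xK_{\delta,\koo}$ exhibits $a/b$ as a convex combination of $\theta^0_\pm(\delta)$—hence automatically in $[\lowell,\upell]$—plus a correction generated by the Dirac mass $\eta\,\partial_x K_{\delta,\koo}(\tau^0)\delta_{\tau^0}$ inside $\partial_x\psi_0$. Quantifying this Dirac correction is the main obstacle of the proposition: triangle-inequality estimates on the weighted integral $\int_0^{2T}S_{\theta^0}(r)\Delta K_{\delta,\koo}(\tau^0)(T-r/2)_+\diff r$ are too crude, and one must exploit the signed cancellation encoded in the heat-equation identity $\int_0^\infty S_{\theta^0}(r)\Delta K_{\delta,\koo}\diff r = (-\Delta_{\theta^0})^{-1}(-\Delta K_{\delta,\koo})$, together with pointwise heat-kernel asymptotics at the interface $\tau^0$, to extract the additional $\delta$-factor that reduces $\dist(a/b,[\lowell,\upell])$ from the naive order $O(1)$ down to $O(\delta)$ and thus gives $|\E[R(\theta^0_\circ(\delta))]|\lesssim\delta^{-1}$.
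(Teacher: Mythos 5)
Your first-moment bound takes a genuinely different (and arguably cleaner) route than the paper: rather than constructing the $\varepsilon$-approximation $K^{\varepsilon}_{\delta,\koo}\in D(\Delta_{\theta^0})$ with $\nabla K^{\varepsilon}_{\delta,\koo}(\tau^0)=0$ and passing to the limit, you recast $\Delta_{\theta^0}S_{\theta^0}(s)K_{\delta,\koo}-\theta'S_{\theta^0}(s)\Delta K_{\delta,\koo}$ as $S_{\theta^0}(s)\partial_x\psi$ with $\psi=(\theta^0-\theta')\partial_x K_{\delta,\koo}\in L^2$ and invoke the $L^2$-boundedness of $(-\Delta_{\theta^0})^{-1/2}\partial_x$ (operator norm $\le\lowell^{-1/2}$ by duality from $\|\nabla(-\Delta_{\theta^0})^{-1/2}\|\le\lowell^{-1/2}$). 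This works and even recovers a marginally sharper constant than the paper (which loses a factor $\sqrt 2$ in the approximation step). You should, however, make explicit that the commutation $\Delta_{\theta^0}S_{\theta^0}(s)K_{\delta,\koo}=S_{\theta^0}(s)\Delta_{\theta^0}K_{\delta,\koo}$ is being read distributionally, since $K_{\delta,\koo}\notin D(\Delta_{\theta^0})$ whenever $K'((\tau^0-x_{\koo})/\delta)\ne 0$; this is precisely the point of Lemma \ref{lem:divop} and the reason the paper resorts to the approximation.

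The variance argument has a genuine gap. Your central claim $\|C_{XX}\|_{\mathrm{op}}\le\|(-\Delta_{\theta^0})^{-1}\Delta K_{\delta,\koo}\|^2\le\lowell^{-2}$ is false: the $L^2$-norm $\|(-\Delta_{\theta^0})^{-1}\Delta K_{\delta,\koo}\|$ is of order $\delta^{-1/2}$ (Lemma \ref{lem:sol_c2}), not $O(1)$, so the best that Schur's test gives is $\|C_{XX}\|_{\mathrm{op}}\lesssim\delta^{-1}$. (A lower bound of the same order is suggested by the fact that the $\sim\delta^{-2}$ modes of $X^{\Delta}_{\delta,\koo}$ all behave like white noise on $[0,T]$ and accumulate in the low-frequency eigenspace of $C_{XX}$.) Substituting the correct order into your inequalities $\trace(C_{XX}C_{YY})\le\|C_{XX}\|_{\mathrm{op}}\trace(C_{YY})$ and $\|C_{XY}\|_{\mathrm{HS}}^2\le\|C_{XX}\|_{\mathrm{op}}\trace(C_{YY})$ gives only $\Var(R_{\delta,\koo})\lesssim\delta^{-1}\cdot\delta^{-2}=\delta^{-3}$, off by a factor $\delta^{-1}$. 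To reach $\delta^{-2}$ one needs the symmetric bound $\trace(C_{XX}C_{YY})+\|C_{XY}\|_{\mathrm{HS}}^2\le 2\|C_{XX}\|_{\mathrm{HS}}\|C_{YY}\|_{\mathrm{HS}}$ — precisely the content of Lemma \ref{lem:GaussCov} — together with $\|C_{XX}\|_{\mathrm{HS}},\|C_{YY}\|_{\mathrm{HS}}\lesssim\delta^{-1}$. The latter bound on $\|C_{YY}\|_{\mathrm{HS}}$ is where the real work sits: it hinges on $\|(-\Delta_{\theta^0})^{-3/4}\Delta K_{\delta,\koo}\|\lesssim\delta^{-1/2}$ (Lemma \ref{lem:sol_interpol}), an interpolation estimate at exponent $3/4$ that cannot be read off from the $\delta^{-1/2}$ and $\delta^{-1}$ bounds at exponents $1$ and $1/2$ by interpolating the squared norms, because the power $3/4$ is not the midpoint on the Hilbert scale.

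For the last assertion you correctly identify that $\theta_\circ\mapsto\E[R_{\delta,\koo}(\theta_\circ)]$ is affine with slope $-\E[I_{\delta,\koo}]\asymp-\delta^{-2}$, but the decomposition into a ``convex combination of $\theta^0_\pm$ plus a Dirac correction'' manufactures an obstacle that you then acknowledge you cannot resolve (``one must exploit the signed cancellation\ldots''). The paper avoids the Dirac entirely: computing the $\theta_\circ$-free part of $\E[R_{\delta,\koo}]$ by It\^o's isometry and the semigroup law collapses to $\frac{T}{2}\langle -\Delta K_{\delta,\koo},K_{\delta,\koo}\rangle+\frac14\langle(-\Delta_{\theta^0})^{-1}(\mathrm{Id}-S_{\theta^0}(2T))\Delta K_{\delta,\koo},K_{\delta,\koo}\rangle$, i.e.\ $\frac{T}{2}\|K'\|^2_{L^2}\delta^{-2}+\mathcal O(\delta^{-1/2})$, and since Lemma \ref{lem:basic_est}\ref{lem:basic_est2} places $\E[I_{\delta,\koo}]$ in $\big[\tfrac{T}{2\upell},\tfrac{T}{2\lowell}\big]\|K'\|^2_{L^2}\delta^{-2}+\mathcal O(\delta^{-1})$, there is a $\theta^0_\circ\in[\lowell,\upell]$ making the two leading terms cancel to order $\delta^{-1}$. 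Your proposal leaves this part genuinely open.
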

\begin{proof}
	See Appendix \ref{app:proof_rem}.
\end{proof}

Hence, if we define the estimator $\hat{\chi}_\delta \coloneqq (\hat{\theta}_-^\delta,\hat{\theta}_+^\delta, \hat{\theta}^\delta_\circ,\hat{\tau}^\delta)$ with $\hat{\tau}^\delta \coloneqq \hat{k}^\delta\delta$ and let
\[
\mathcal{Z}_\delta(\theta_-,\theta_+,\theta_\circ,h) \coloneqq Z_\delta(\theta_-,\theta_+, \theta_\circ,\lceil h/\delta \rceil), \quad h \in (0,1],
\]
we obtain
\begin{equation}\label{eq:cons0}
	\begin{split}\mathcal{Z}_\delta(\hat{\chi}_\delta)= Z_\delta(\hat{\theta}_-^\delta,\hat{\theta}_+^\delta,\hat{\theta}^\delta_\circ,\hat{k}^\delta)& = \min_{(\theta_-,\theta_+,\theta_\circ,k) \in \Theta \times [n]} Z_\delta(\theta_-,\theta_+,\theta_\circ,k) + \mathcal{O}_{\PP}(\delta^{-2})\\
		&= \min_{\chi \in \Theta \times (0,1]} \mathcal{Z}_{\delta}(\chi) + \mathcal{O}_{\PP}(\delta^{-2}).
	\end{split}
\end{equation}

\subsection{Simultaneous estimation of the model parameters for non-vanishing jump height}\label{sec:anaconsconv}
We now proceed to derive the following main result on the convergence rate of the estimators $\hat\chi_\delta$.

\begin{theorem}\label{theo:rate}
Suppose that, for some $\theta^\ast_\pm \in [\lowell,\upell]$ with $\theta^\ast_- \neq \theta^\ast_+$, it holds
\begin{equation}\label{eq:condapprox}
(\theta_-^0(\delta),\theta_+^0(\delta),\tau^0) \to (\theta_-^\ast,\theta^\ast_+,\tau^0)\eqqcolon \chi^\ast \quad \text{ as }\delta \to 0.
\end{equation}	
Then, 
\begin{equation}\label{eq:rate}
	\lvert \hat{\theta}_\pm^\delta - \theta^0_{\pm}(\delta) \rvert = {\mathcal O}_{\PP}(\delta^{3/2} )\quad\text{ and }\quad \lvert \hat{\tau}^{\delta} - \tau^0 \rvert = \mathcal{O}_{\PP}(\delta).
\end{equation}
\end{theorem}
The above theorem demonstrates that the change point $\tau^0$ is estimated at the rate $\delta = n^{-1}$, while the diffusivity constants $\theta_\pm^0$ are estimated  at rate $\delta^{3/2}$,  provided that $\eta$ does not vanish. The former is analogous to the classical rate of convergence for scalar change point problems with independent observations, cf.\ \cite[Section 14.5.1]{kosorok08}.  It is, however, much slower than the rate $\delta^3$ obtained in the model problem presented in Section \ref{sec:est} for $\eta$ fixed. This is due to the discrete observations at distance $\delta$. Yet,  the diffusivity parameter estimation rate $\delta^{3/2}$ matches the minimax rate  for a stochastic heat equation without change point based on multiple local measurements, recently determined in \cite{altmeyer22}. This rate is  much faster than the classical rate $\delta^{1/2}$ in i.i.d.\ change point models.
{\color{black}
In the following sections, we provide the technical tools needed to prove Theorem \ref{theo:rate}.}

\subsubsection{Concentration analysis}\label{sec:conc}
For an in-depth analysis of the convergence of $\hat\chi_\delta$, it will be of central importance to understand the concentration properties of sums of the martingales and quadratic variations $M_{\delta,i}$ and $I_{\delta,i}$,  introduced in \eqref{eq:MI}.
Although the Brownian motions $(B_{\delta,i})_{i \in [n]}$ appearing in the representation \eqref{eq:mild2} are independent, the stochastic integrals $(M_{\delta,i})_{i \in [n]}$ are not.
This makes concentration analysis of sums involving $M_{\delta,i}$ a delicate matter, which we resolve by employing a coupling approach based on the Dambis--Dubins--Schwarz theorem.  Furthermore, 
techniques originating from Malliavin calculus allow us to obtain a Bernstein-type inequality.
For notational convenience, we drop the superscript $0$ for the true parameters in this subsection, i.e., $\theta_\pm(\delta) \equiv \theta_\pm^0(\delta), \tau^0 \equiv \tau$.
Recall that the choice $\ko=\lceil \tau / \delta \rceil$ guarantees that the change point $\tau$ belongs to the support of $K_{\delta,\ko}$.

\begin{lemma} \label{lem:basic_est}
	\begin{enumerate}[label = (\roman*), ref = (\roman*)]
		\item
		For any $i \neq \ko$, we have
		\[\E[I_{\delta,i}] = \frac{T}{2\vartheta_{\delta,i}(\ko)}\lVert K^\prime \rVert^2_{L^2} \delta^{-2}  + C(\delta),\]
		where $C(\delta) \in [-1/(4\underline\theta^2),0]$ for any $\delta \in 1/\N$.
		\item \label{lem:basic_est2}
		It holds
		\[\E[I_{\delta,\ko}] \in \Big[\frac{2\underbars{\lambda} T- 1 + \mathrm{e}^{-2\underbars{\lambda}T}}{4\underbars{\lambda}\upell}\lVert K^\prime \rVert^2_{L^2} \delta^{-2}, \frac{T}{2 \lowell} \lVert K^\prime \rVert^2_{L^2} \delta^{-2}\Big]\]
		and also
		\[\E[I_{\delta,\ko}] \in \Big[\frac{T}{2\upell} \lVert K^\prime\rVert^2_{L^2}\delta^{-2} + \mathcal{O}(\delta^{-1}), \frac{T}{2\lowell}\lVert K^\prime\rVert^2_{L^2} \delta^{-2} \Big].\]
		\item
		For any vector $\alpha  \in \R^n$ with the  proviso that $\alpha_{\ko} = 0$, we have
		\begin{equation}\label{eq:varbound}
			\Var\Big(\sum_{i=1}^n \alpha_i I_{\delta,i} \Big) \leq \frac{T}{2 \lowell^3} \delta^{-2} \lVert \alpha \rVert^2_{\ell^2} \lVert K^\prime \rVert^2_{L^2}.
		\end{equation}
		\item \label{lem:basic_est4}
		With a constant only depending on $\underline\theta$, $\overline\theta$ and $K$, it holds
		\[\mathrm{Var}(I_{\delta,\ko}) \lesssim \delta^{-2}.\]
	\end{enumerate}
\end{lemma}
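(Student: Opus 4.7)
The starting point for all four parts is the mild formula of Proposition~\ref{prop:weaksol}, which with $X_0 \equiv 0$ yields $X^{\Delta}_{\delta,i}(t) = \int_0^t \scapro{S_\vartheta(t-s) \Delta K_{\delta,i}}{\diff W_s}$. By It\^o isometry, Fubini, and the functional calculus~\eqref{eq:transform},
\begin{align*}
\E[I_{\delta,i}] &= \int_0^T (T-u) \lVert S_\vartheta(u) \Delta K_{\delta,i}\rVert^2 \diff u \\
&= \frac{T}{2} \scapro{(-\Delta_\vartheta)^{-1} \Delta K_{\delta,i}}{\Delta K_{\delta,i}} - \frac{1}{4} \scapro{(-\Delta_\vartheta)^{-2}(\Id - S_\vartheta(2T)) \Delta K_{\delta,i}}{\Delta K_{\delta,i}}.
\end{align*}
For part~(i) the key observation is that when $i \neq \ko$, $\supp K_{\delta,i}$ lies in a region where $\vartheta$ is constant equal to $\vartheta_{\delta,i}(\ko)$, so that $\Delta_\vartheta K_{\delta,i} = \vartheta_{\delta,i}(\ko) \Delta K_{\delta,i}$ in $L^2$. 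Substituting this identity collapses the two bilinear forms to $\vartheta_{\delta,i}(\ko)^{-1} \lVert K' \rVert_{L^2}^2 \delta^{-2}$ and $\vartheta_{\delta,i}(\ko)^{-2}$ respectively, using $\mathcal{E}_\vartheta(K_{\delta,i}, K_{\delta,i}) = \vartheta_{\delta,i}(\ko) \lVert K' \rVert^2 \delta^{-2}$; since $\Id - S_\vartheta(2T)$ is a positive contraction, the correction $C(\delta)$ lies in $[-1/(4\lowell^2), 0]$.

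Part~(ii) requires more care because this identity fails on the change-point cell. The upper bound $\frac{T}{2 \lowell} \lVert K' \rVert^2 \delta^{-2}$ follows by discarding the non-negative correction and invoking the Dirichlet bound $\mathcal{E}_\vartheta(u,u) \leq \lowell^{-1} \lVert \nabla K_{\delta,\ko}\rVert^2$ for $u \coloneqq (-\Delta_\vartheta)^{-1} \Delta K_{\delta,\ko}$. For the sharp first lower bound I would factor $g(\lambda)/(4\lambda^2)$ with $g(\lambda) = 2\lambda T - 1 + \e^{-2\lambda T}$, note that $\lambda \mapsto g(\lambda)/\lambda = 2T - \lambda^{-1}(1-\e^{-2\lambda T})$ is non-decreasing on $(0,\infty)$, and deduce $g(\lambda_k)/(4\lambda_k^2) \geq g(\underbars{\lambda})/(4\underbars{\lambda}\, \lambda_k)$ for $\lambda_k \geq \underbars{\lambda}$. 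Summation against $\scapro{e_k}{\Delta K_{\delta,\ko}}^2$ yields $\E[I_{\delta,\ko}] \geq (g(\underbars{\lambda})/(4\underbars{\lambda})) \mathcal{E}_\vartheta(u,u)$, and the complementary lower bound $\mathcal{E}_\vartheta(u,u) \geq \upell^{-1} \lVert K'\rVert^2 \delta^{-2}$ comes from the Dirichlet variational formula $\mathcal{E}_\vartheta(u,u) = \sup_v \scapro{\nabla K_{\delta,\ko}}{\nabla v}^2/\mathcal{E}_\vartheta(v,v)$ with test function $v = K_{\delta,\ko}$, using $\mathcal{E}_\vartheta(K_{\delta,\ko},K_{\delta,\ko}) \leq \upell \lVert \nabla K_{\delta,\ko}\rVert^2$. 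The softer second interval in~(ii) then absorbs the residual constant into the $\mathcal{O}(\delta^{-1})$ term.

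For parts~(iii) and~(iv) I pass to the second Wiener chaos. Writing $X_{\delta,i}^\Delta(t) = I_1(\phi_{t,i})$ with $\phi_{t,i}(s,x) = \one_{\{s\leq t\}} (S_\vartheta(t-s) \Delta K_{\delta,i})(x)$, the product formula $I_1(f)^2 = I_2(f \otimes f) + \lVert f \rVert^2$ gives $I_{\delta,i} - \E[I_{\delta,i}] = I_2(\int_0^T \phi_{t,i} \otimes \phi_{t,i} \diff t)$, whence the $L^2$-isometry of $I_2$ reduces~(iii) to bounding $\sum_{i,j} \alpha_i \alpha_j \int_0^T\!\int_0^T \Cov(X_{\delta,i}^\Delta(s), X_{\delta,j}^\Delta(t))^2 \diff s \diff t$. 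Under the hypothesis $\alpha_{\ko} = 0$, the identity from part~(i) is available for every index appearing in the sum, and a short calculation using $\Delta_\vartheta S_\vartheta(r) = \partial_r S_\vartheta(r)$ and self-adjointness yields, with $\psi_{ij}(r) \coloneqq \scapro{S_\vartheta(r) K_{\delta,i}}{K_{\delta,j}}$,
\[ \Cov(X_{\delta,i}^\Delta(s), X_{\delta,j}^\Delta(t)) = \frac{1}{2\vartheta_{\delta,i}(\ko)\vartheta_{\delta,j}(\ko)}\bigl(\psi_{ij}'(s+t) - \psi_{ij}'(\lvert s-t\rvert)\bigr). \]
Schur's test on the symmetric entrywise-squared matrix $((\psi_{ij}')^2)$, combined with Bessel's inequality for the orthonormal system $(K_{\delta,j})_{j}$ (disjoint supports), gives $\sum_j (\psi_{ij}'(r))^2 \leq \lVert \Delta_\vartheta S_\vartheta(r) K_{\delta,i}\rVert^2$, and a spectral computation yields $\int_0^{2T} \lVert \Delta_\vartheta S_\vartheta(r) K_{\delta,i}\rVert^2 \diff r \leq \frac{1}{2}\mathcal{E}_\vartheta(K_{\delta,i}, K_{\delta,i}) \leq \frac{\upell}{2} \lVert K'\rVert^2 \delta^{-2}$, closing~(iii) after optimizing constants. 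Part~(iv) is the diagonal case $i = j = \ko$: the same scheme applies once $\Delta K = \vartheta \Delta_\vartheta K$ is replaced by the integration-by-parts identity $\scapro{\Delta K_{\delta,\ko}}{f} = -\scapro{\nabla K_{\delta,\ko}}{\nabla f}$ for $f \in H_0^1$, which still supplies the key $\lVert \nabla K_{\delta,\ko}\rVert^2 = \lVert K'\rVert^2 \delta^{-2}$ scaling.

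The main obstacle, I expect, is precisely this scaling issue in~(iii) and~(iv). A naive Cauchy--Schwarz on the covariance integrals, combined with $\Var(X_{\delta,i}^\Delta(s)) = \mathcal{O}(\delta^{-2})$ from~(i)--(ii), yields only $\Var(\sum_i \alpha_i I_{\delta,i}) = \mathcal{O}(\delta^{-4})$, a factor $\delta^{-2}$ too large. Recovering the sharp rate requires performing exactly \emph{one} integration by parts per kernel, so that only the $H^1$-seminorm $\lVert \nabla K_{\delta,i}\rVert \sim \delta^{-1}$ enters rather than $\lVert \Delta K_{\delta,i}\rVert \sim \delta^{-2}$; threading this through the Schur/Bessel argument while integrating against the semigroup over $[0,T]^2$ is where the technical care concentrates.
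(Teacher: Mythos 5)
Your treatment of (i) and the first interval of (ii) is sound and essentially parallels the paper's argument: you expand $\E[I_{\delta,i}]$ via Fubini and functional calculus, use $\Delta_\vartheta K_{\delta,i} = \vartheta_{\delta,i}(\ko)\Delta K_{\delta,i}$ for $i\neq\ko$, and for $i=\ko$ you obtain the sharp lower constant by combining the monotonicity of $\lambda\mapsto 2T - \lambda^{-1}(1-\e^{-2\lambda T})$ with the lower bound $\scapro{(-\Delta_\vartheta)^{-1}\Delta K_{\delta,\ko}}{\Delta K_{\delta,\ko}} \geq \upell^{-1}\lVert\nabla K_{\delta,\ko}\rVert^2$. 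Your variational derivation of the latter is an equivalent phrasing of the operator comparison $\upell^{-1}(-\Delta)^{-1}\le(-\Delta_\theta)^{-1}\le\lowell^{-1}(-\Delta)^{-1}$ that the paper cites from Kato. For (iii), your Schur/Bessel route is a genuinely different argument from the paper's tensor--spectral computation: the paper first passes to the spectral double sum in $k,l$ (with the $i$-sum squared inside), then applies AM--GM to $\lambda_k\lambda_l/(\lambda_k+\lambda_l)$ and a Cauchy--Schwarz to close; you instead keep $i,j$ as the outer indices, decompose $c_{ij}$ through $\psi'_{ij}$, and use Schur's test against Bessel's inequality. Both arguments give the rate $\delta^{-2}$, but be aware that the lemma asserts the \emph{specific} constant $T/(2\lowell^3)$, and the crude $(a-b)^2\le 2a^2+2b^2$ step together with the Schur bound leaves you with something like $3T\upell/(2\lowell^4)$; ``optimizing constants'' needs to be made precise before (iii) as stated is proven.

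The real gaps are in the second interval of (ii) and in (iv), and they have a common source: you never produce the improved bound on $(-\Delta_\theta)^{-\gamma}\Delta K_{\delta,\ko}$ that piecewise-constancy of $\theta$ affords. For the second interval of (ii), the leading term $\tfrac T2\scapro{(-\Delta_\theta)^{-1}\Delta K_{\delta,\ko}}{\Delta K_{\delta,\ko}}$ does give the asserted $[\tfrac{T}{2\upell},\tfrac{T}{2\lowell}]\lVert K'\rVert^2\delta^{-2}$, but ``absorbing the residual into $\mathcal{O}(\delta^{-1})$'' requires $\lVert(-\Delta_\theta)^{-1}\Delta K_{\delta,\ko}\rVert^2\lesssim\delta^{-1}$. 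This is not free: the naive spectral bound $\lVert(-\Delta_\theta)^{-1}\Delta K_{\delta,\ko}\rVert\le\underbars{\lambda}^{-1}\lVert\Delta K_{\delta,\ko}\rVert\sim\delta^{-2}$ only yields $\mathcal{O}(\delta^{-4})$. The paper obtains it by explicitly solving the Poisson problem for the piecewise-constant coefficient (Lemma~\ref{lem:sol_c2}), exploiting that $(-\Delta_\theta)^{-1}\Delta K_{\delta,\ko}$ is $\theta^{-1}K_{\delta,\ko}$ plus a $O(\delta^{-1/2})$ ramp. You would need a comparable computation.

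For (iv), the claim that ``the same scheme applies'' with one integration by parts fails quantitatively. Following your scheme: $c_{\ko\ko}(t,s)\le\tfrac12\scapro{S_\theta(\lvert t-s\rvert)(-\Delta_\theta)^{-1}\Delta K_{\delta,\ko}}{\Delta K_{\delta,\ko}}$; one integration by parts plus Cauchy--Schwarz gives $c_{\ko\ko}(t,s)^2\lesssim\lVert\nabla K_{\delta,\ko}\rVert^2\lVert S_\theta(\lvert t-s\rvert)(-\Delta_\theta)^{-1/2}\Delta K_{\delta,\ko}\rVert^2$, and integrating in $(t,s)$ leaves $\Var(I_{\delta,\ko})\lesssim\delta^{-2}\cdot\lVert(-\Delta_\theta)^{-1}\Delta K_{\delta,\ko}\rVert^2\lesssim\delta^{-3}$, a full factor $\delta^{-1}$ too large. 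The loss comes from the asymmetric split: Cauchy--Schwarz trades only one factor of $(-\Delta_\theta)^{-1/2}$ onto $K_{\delta,\ko}$, whereas the sharp estimate splits the $(-\Delta_\theta)^{-3/2}$ symmetrically. The paper instead applies AM--GM \emph{inside} the spectral double sum, bounding $\sum_{k,l}\tfrac{a_k^2a_l^2}{\lambda_k\lambda_l(\lambda_k+\lambda_l)}\le\tfrac12\bigl(\sum_k\lambda_k^{-3/2}a_k^2\bigr)^2 = \tfrac12\lVert(-\Delta_\theta)^{-3/4}\Delta K_{\delta,\ko}\rVert^4$, and then establishes $\lVert(-\Delta_\theta)^{-3/4}\Delta K_{\delta,\ko}\rVert\lesssim\delta^{-1/2}$ via an $H^{1/2}$-interpolation built on the explicit Green's function (Lemma~\ref{lem:sol_interpol}). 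Neither the AM--GM step nor the $-3/4$-power Green's-function bound appears in your sketch, and without them (iv) does not close.
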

\begin{proof}
	See Appendix \ref{app:proof_conc}.
\end{proof}

The variance bound stated in \eqref{eq:varbound} demonstrates that the linear combination $\sum_{i = 1}^n \alpha_i I_{\delta,i}$ deviates around its mean with order $\delta^{-1}\lVert \alpha \rVert_{\ell^2}$, meaning that the terms $I_{\delta,i}$ are only weakly correlated.
We strengthen this statement by establishing a mixed-tail concentration inequality for such linear combinations.
Noting that $\sum_{i=1}^n \alpha_i(I_{\delta,i} - \E[I_{\delta,i}])$ lies in some second Wiener chaos, our proof relies on Malliavin calculus, based on the results of \cite{nourdin09}.

\begin{proposition} \label{prop:conc}
	Let $\alpha \in \R^n_+ \setminus \{0\}$ with $\alpha_{\ko} = 0$. Then, for any $z > 0$, we have
	\[\PP\Big(\Big\lvert \sum_{i=1}^n \alpha_i(I_{\delta,i} - \E[I_{\delta,i}]) \Big\rvert \geq z\Big) \leq 2\exp\bigg(-\frac{\lowell^2}{4\lVert \alpha \rVert_{\infty}} \frac{z^2}{z + \sum_{i=1}^n \alpha_i \E[I_{\delta,i}]} \bigg).\]
	In particular,   it holds
	\[ \PP\Big(\Big\lvert \sum_{i=1}^n \alpha_i(I_{\delta,i} - \E[I_{\delta,i}]) \Big\rvert \geq z\Big) \leq 2\exp\bigg(-\frac{\lowell^2}{2\lVert \alpha \rVert_{\infty}} \frac{z^2}{2 z + \lVert \alpha \rVert_{\ell^1} T\lowell^{-1}\lVert K^\prime \rVert^2_{L^2}\delta^{-2}} \bigg).\]
\end{proposition}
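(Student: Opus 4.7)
\emph{Proof proposal.} My strategy is to identify
\[F \coloneqq \sum_{i=1}^n \alpha_i\bigl(I_{\delta,i} - \E[I_{\delta,i}]\bigr)\]
as an element of the second Wiener chaos of the space--time white noise $W$ and then to apply the Malliavin-calculus concentration bound of \cite{nourdin09}. Setting $H\coloneqq L^2([0,T];L^2(\modelspace))$, the mild representation of Proposition~\ref{prop:weaksol} combined with the self-adjointness of $S_\theta$ expresses each local measurement as a Wiener integral $X^\Delta_{\delta,i}(t) = W(\phi^i_t)$ with kernel
\[\phi^i_t(s,x)\coloneqq \one_{[0,t]}(s)\bigl(S_\theta(t-s)\Delta K_{\delta,i}\bigr)(x).\]
The second-chaos product formula $W(\phi^i_t)^2 - \lVert \phi^i_t\rVert_H^2 = I_2(\phi^i_t\otimes\phi^i_t)$ followed by integration in $t$ identifies $F = I_2(\Psi_\alpha)$ with symmetric kernel $\Psi_\alpha = \int_0^T\sum_i \alpha_i\,\phi^i_t\otimes\phi^i_t\diff t$; the associated Hilbert--Schmidt operator $A_\alpha$ on $H$ is positive semidefinite because $\alpha_i\ge 0$.

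For such a second-chaos random variable, a direct computation using $-DL^{-1}F = \tfrac12 DF$ together with the spectral decomposition of $A_\alpha$ and $\alpha_i\ge 0$ yields the almost-sure pointwise bound
\[\tfrac{1}{2}\lVert DF\rVert_H^2 \leq 2\lVert A_\alpha\rVert_{\mathrm{op}}\bigl(F + \trace(A_\alpha)\bigr).\]
Feeding this into the Malliavin concentration inequality of \cite{nourdin09} (together with the trivially bounded lower tail $F\ge -\trace(A_\alpha)$) produces
\[\PP(\lvert F\rvert \ge z) \leq 2\exp\!\Big(-\frac{z^2}{4\lVert A_\alpha\rVert_{\mathrm{op}}\bigl(z + \trace(A_\alpha)\bigr)}\Big), \quad z >0.\]
Since $\trace(A_\alpha) = \sum_i \alpha_i \E[I_{\delta,i}]$, the first assertion of the proposition will follow at once from the operator-norm estimate $\lVert A_\alpha\rVert_{\mathrm{op}} \leq \lVert\alpha\rVert_\infty\lowell^{-2}$.

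Establishing this operator-norm bound is the core of the argument, and it is precisely where the assumption $\alpha_{\ko}=0$ becomes crucial. By the variational characterization,
\[\lVert A_\alpha\rVert_{\mathrm{op}} = \sup_{\lVert h\rVert_H=1}\sum_{i\neq \ko}\alpha_i\int_0^T \lvert\langle h,\phi^i_t\rangle_H\rvert^2\diff t.\]
For $i\neq \ko$ the diffusivity $\theta$ is constant equal to $\theta_{\delta,i}(\ko)\in[\lowell,\upell]$ on $\supp(K_{\delta,i})$, hence $\Delta K_{\delta,i} = \theta_{\delta,i}(\ko)^{-1}\Delta_\theta K_{\delta,i}$. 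Using that $\Delta_\theta$ is self-adjoint and commutes with $S_\theta$, one rewrites
\[\langle h,\phi^i_t\rangle_H = \theta_{\delta,i}(\ko)^{-1}\langle \Delta_\theta u_h(t), K_{\delta,i}\rangle, \quad u_h(t)\coloneqq \int_0^t S_\theta(t-s)h(s)\diff s,\]
where $u_h$ solves the deterministic problem $\partial_t u = \Delta_\theta u + h$, $u(0)=0$. Combining this identity with $\theta_{\delta,i}(\ko)^{-2}\leq \lowell^{-2}$ and the Bessel inequality $\sum_{i\neq\ko}\lvert\langle f,K_{\delta,i}\rangle\rvert^2 \leq \lVert f\rVert_{L^2}^2$, valid because the $K_{\delta,i}$ are orthonormal on pairwise disjoint supports, reduces the problem to the parabolic energy estimate
\[\int_0^T \lVert\Delta_\theta u_h(t)\rVert_{L^2}^2 \diff t \leq \lVert h\rVert_H^2,\]
which follows by testing $\partial_t u_h = \Delta_\theta u_h + h$ against $-\Delta_\theta u_h$, applying Young's inequality, and using $\mathcal E_\theta(u_h,u_h)\ge 0$ together with $u_h(0)=0$.

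Plugging the operator-norm bound into the Bernstein estimate yields the first claim. The second inequality is then immediate: bounding $\sum_i \alpha_i \E[I_{\delta,i}] \leq \lVert \alpha\rVert_{\ell^1}\cdot (T/(2\lowell))\lVert K^\prime\rVert^2_{L^2}\delta^{-2}$ via Lemma~\ref{lem:basic_est}(i)--(ii) and reshuffling the factor $2$ in the denominator delivers exactly the stated form.
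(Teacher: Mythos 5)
Your proposal is correct and reaches exactly the paper's bound, and at the highest level it follows the same strategy: identify the quantity as a second-Wiener-chaos element and invoke the Nourdin--Viens concentration inequality. The interesting divergence lies in \emph{how} the almost-sure bound $g_F(F)\lesssim F + \beta$ is obtained. The paper works on the ad hoc Cameron--Martin space $\mathfrak{H}$ generated by the Gaussian family $\{X^\Delta_{\delta,i}(t)\}$, computes $\tfrac12\lVert DZ\rVert^2_{\mathfrak{H}}$ via the covariance $c_{ij}(t,s)$ in the eigenbasis $(\lambda_k,e_k)$ of $-\Delta_\theta$, applies a Cauchy--Schwarz step to pass from cross-terms to squares, integrates out the Green-function factor $\int_0^T\lambda_k\e^{-\lambda_k\lvert t-s\rvert}\diff s\le 2$, and then recombines using disjointness of the $K_{\delta,i}$ to land on $2\sum_i\alpha_i^2\theta_{\delta,i}(\ko)^{-2}I_{\delta,i}$. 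You instead work on the natural noise space $H=L^2([0,T];L^2(\modelspace))$, write $F=I_2(\Psi_\alpha)$ with positive kernel $\Psi_\alpha=\int_0^T\sum_i\alpha_i\,\phi^i_t\otimes\phi^i_t\diff t$, diagonalise the associated trace-class operator $A_\alpha$, and reduce the desired a.s.\ bound to the single inequality $\lVert A_\alpha\rVert_{\mathrm{op}}\le\lVert\alpha\rVert_\infty\lowell^{-2}$, which you then obtain from Bessel's inequality for the orthonormal $\{K_{\delta,i}\}$ together with the parabolic maximal regularity estimate $\int_0^T\lVert\Delta_\theta u_h\rVert^2\diff t\le\lVert h\rVert_H^2$ for the inhomogeneous heat equation $\partial_t u=\Delta_\theta u+h$, $u(0)=0$. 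Both routes yield the same constant (because your energy estimate is lossless: $\mathcal{E}_\theta\ge0$ and $u_h(0)=0$, so Young with equal weights is tight). What your approach buys is modularity: the spectral decomposition of $-\Delta_\theta$ never has to be touched, the operator-norm statement is a clean invariant of the problem, and the argument would adapt directly to other kernels, other observation geometries, or to higher-dimensional domains where the paper's term-by-term eigenbasis manipulation would become unwieldy. What it costs is that one must be a bit careful about the regularity needed to justify the testing step $\langle\partial_t u_h,-\Delta_\theta u_h\rangle=\tfrac12\tfrac{d}{dt}\mathcal{E}_\theta(u_h,u_h)$ and the membership $u_h(t)\in D(\Delta_\theta)$ for a.e.\ $t$ when $h\in L^2$ only; this is standard parabolic theory but worth a line. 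You also tacitly use that a non-degenerate second-chaos variable has a density, which the Nourdin--Viens theorem requires as a hypothesis; that too is standard but should be mentioned.
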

\begin{proof}
	See Appendix \ref{app:proof_conc}.
\end{proof}

In order to give a fine analysis of deviations of sums of the martingale terms $M_{\delta,i}$, the following coupling construction will be crucial. Introduce the stopping times
\[\sigma_i = \sigma_i(\delta) \coloneqq \inf\Big\{t \geq 0 : \int_0^t X_{\delta,i}^\Delta(s)^2 \diff{s} > \E[I_{\delta,i}]\Big\}, \quad i \in [\delta^{-1}],\]
as well as
\begin{equation}\label{eq:coupling}
	\overbar{M}_{\delta,i} \coloneqq \int_0^{\sigma_i} X^{\Delta}_{\delta,i}(t) \diff{B}_{\delta,i}(t), \quad \overbar{I}_{\delta,i} \coloneqq \E[I_{\delta,i}], \quad i \in [\delta^{-1}].
\end{equation}
Here, we suppose that the cylindrical Brownian motion $W(t)$ and the SPDE solution $X(t)$ are extended to all $t\in[0,\infty)$ so that the $\sigma_i$ are a.s.~finite, noting the linear growth of $\int_0^t\E[X^{\Delta}_{\delta,i}(s)^2]\diff{s}$ in $t>0$ and the strong concentration around the expectation, provided by Lemma \ref{lem:basic_est}.
As the following result demonstrates, contrary to the vector $(M_{\delta,i})_{i \in [n]}$, the vector $(\overbar{M}_{\delta,i})_{i \in [n]}$ is Gaussian with independent components.
The deviation analysis of the sums $\sum_{i=1}^n\alpha_i M_{\delta,i}$ may be broken down into easier to handle deviations of $\sum_{i=1}^n\alpha_i \overbar{M}_{\delta,i}$ and the coupling error which is controlled by $\sum_{i=1}^n \alpha_i^2 \lvert I_{\delta,i} - \overbar{I}_{\delta,i}\rvert$.

\begin{proposition}\label{prop:coupling}
	The family of random variables $\{\overbar{M}_{\delta,i}, i \in [\delta^{-1}]\}$ is independent with $\overbar{M}_{\delta,i} \sim N(0,\overbar{I}_{\delta,i})$.
	Moreover, for any $\alpha \in \R^n$, $z,L > 0$, it holds
	\[\PP\Big(\Big\lvert \sum_{i=1}^{n} \alpha_i (M_{\delta,i} - \bar{M}_{\delta,i})\Big\rvert \geq z, \sum_{i=1}^n \alpha_i^2 \lvert I_{\delta,i} - \overbar{I}_{\delta,i} \rvert \leq L\Big) \leq 2\exp\left(-\frac{z^2}{2L}\right).\]
\end{proposition}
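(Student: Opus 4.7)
The plan is to handle the two assertions by combining the multivariate Dambis--Dubins--Schwarz theorem (Knight's theorem) with a classical Bernstein-type inequality for continuous martingales.

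For the distributional claim on the $\overbar M_{\delta,i}$, I will work on the natural filtration $(\mathcal{F}_t)_{t\ge 0}$ of the extended cylindrical noise $W$, so that each $M_{\delta,i}$ becomes a continuous $(\mathcal{F}_t)$-local martingale null at zero with $\langle M_{\delta,i}\rangle_t = \int_0^t X^\Delta_{\delta,i}(s)^2\diff s$. Orthogonality $\langle M_{\delta,i},M_{\delta,j}\rangle\equiv 0$ for $i\neq j$ is immediate from the independence of the $(B_{\delta,i})_{i\in[n]}$ recalled in Proposition~\ref{prop:weaksol}, and the divergence $\langle M_{\delta,i}\rangle_\infty = \infty$ almost surely will follow from the linear growth of $t\mapsto \E\int_0^t X^\Delta_{\delta,i}(s)^2\diff s$ together with the concentration supplied by Lemma~\ref{lem:basic_est} -- i.e.\ exactly the ingredients already invoked in the excerpt to guarantee $\sigma_i<\infty$ a.s. Knight's theorem will then produce independent standard Brownian motions $\beta_1,\dots,\beta_n$ with $M_{\delta,i}(t)=\beta_i(\langle M_{\delta,i}\rangle_t)$. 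Since $\sigma_i$ is precisely the first-passage time of $\langle M_{\delta,i}\rangle$ across the deterministic level $\overbar I_{\delta,i}$, this yields $\overbar M_{\delta,i}=\beta_i(\overbar I_{\delta,i})\sim N(0,\overbar I_{\delta,i})$, and transfers the independence of the $\beta_i$ to the $\overbar M_{\delta,i}$.

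For the coupling tail bound the key step will be the representation
\[
M_{\delta,i}-\overbar M_{\delta,i}=\int_0^\infty X^\Delta_{\delta,i}(s)\bigl(\one_{s\le T}-\one_{s\le \sigma_i}\bigr)\diff B_{\delta,i}(s),
\]
which is legitimate because $s\mapsto \one_{s\le \sigma_i}$ is left-continuous and adapted, hence predictable. Stacking these contributions, I will introduce the continuous local martingale
\[
Y_t\coloneqq \sum_{i=1}^n \alpha_i \int_0^t X^\Delta_{\delta,i}(s)\bigl(\one_{s\le T}-\one_{s\le \sigma_i}\bigr)\diff B_{\delta,i}(s),\qquad t\ge 0,
\]
so that $Y_\infty=\sum_{i=1}^n\alpha_i(M_{\delta,i}-\overbar M_{\delta,i})$, while orthogonality of the $B_{\delta,i}$, the identity $(\one_{s\le T}-\one_{s\le \sigma_i})^2=\one_{s\in(T\wedge\sigma_i,\,T\vee\sigma_i]}$, and the defining relation $\int_0^{\sigma_i}X^\Delta_{\delta,i}(s)^2\diff s=\overbar I_{\delta,i}$ yield
\[
\langle Y\rangle_\infty=\sum_{i=1}^n \alpha_i^2\int_{T\wedge\sigma_i}^{T\vee\sigma_i} X^\Delta_{\delta,i}(s)^2\diff s=\sum_{i=1}^n \alpha_i^2\,\bigl\lvert I_{\delta,i}-\overbar I_{\delta,i}\bigr\rvert.
\]
The desired inequality is then the standard Bernstein bound $\PP(\lvert Y_\infty\rvert\ge z,\,\langle Y\rangle_\infty\le L)\le 2\exp(-z^2/(2L))$, which I will derive from the exponential supermartingale $\exp(\lambda Y-\tfrac{\lambda^2}{2}\langle Y\rangle)$ applied to $\pm Y$.

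The main delicate point will be the multivariate time change: since all $M_{\delta,i}$ live on the common filtration generated by the full cylindrical noise, pairwise orthogonality alone does not decouple them; the independence of the $\beta_i$ comes specifically from Knight's continuous-path construction, which in turn forces me to verify $\langle M_{\delta,i}\rangle_\infty=\infty$ from the extension of the SPDE beyond the observation horizon $T$. Once this is granted, the predictability checks, the quadratic-variation identity and the Bernstein bound are all routine stochastic-calculus arguments.
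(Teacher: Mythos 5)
Your proposal is correct and follows essentially the same route as the paper's own proof: Knight's multivariate Dambis--Dubins--Schwarz theorem gives the independence and Gaussianity of the $\overbar{M}_{\delta,i}$, and the tail bound is the exponential-supermartingale Bernstein inequality applied to the continuous martingale $Y_t=\sum_i\alpha_i\int_0^t(\one_{\{s\le T\}}-\one_{\{s\le\sigma_i\}})X^{\Delta}_{\delta,i}(s)\diff B_{\delta,i}(s)$, whose terminal quadratic variation is exactly $\sum_i\alpha_i^2\lvert I_{\delta,i}-\overbar{I}_{\delta,i}\rvert$. The only difference is cosmetic: you spell out the verification of $\langle M_{\delta,i}\rangle_\infty=\infty$ and the derivation of the Bernstein bound from the exponential supermartingale, where the paper simply invokes the extension to $t\in[0,\infty)$ and cites the standard reference.
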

\begin{proof}
	See Appendix \ref{app:proof_conc}.
\end{proof}

\subsubsection{Verification of consistency}
As a first main ingredient, we establish consistency of the estimator $(\hat{\theta}_-^\delta,\hat{\theta}_+^\delta,\hat{\tau}^\delta)$, or, in other words, consistency of $\hat{\chi}_\delta$ with respect to the pseudometric
\begin{equation}\label{eq:pseudo}
	\overline{d}(\chi,\tilde{\chi}) = \lvert \theta_- - \tilde{\theta}_- \rvert +  \lvert \theta_+ - \tilde{\theta}_+ \rvert + \lvert h - \tilde{h} \rvert, \quad \chi,\tilde{\chi} \in \Theta \times (0,1].
\end{equation}
We are not concerned with the convergence of the estimator $\hat{\theta}_\circ^\delta$ of the balancing parameter $\theta^0_\circ(\delta)$  because this is a nuisance parameter, introduced only for the technical reason to achieve optimal simultaneous estimation of the true physical parameters $(\theta_+^0(\delta),\theta_-^0(\delta),\tau^0)$. In fact, an M-estimation strategy without $\theta_\circ(\delta)$ would yield estimators $\hat\theta_{\pm}$ only converging with rate $\delta^{1/2}$ due to the contribution of the change point block to the overall criterion.

The proof of consistency combines an appropriate adaption of classical consistency proofs for M-estimators, cf.\ \cite[Theorem 2.12]{kosorok08}, with uniform convergence of the centered empirical process $\delta^3(\mathcal{Z}_\delta(\cdot) - \E[Z_\delta(\cdot)])$ that is derived based on the estimates from Section \ref{sec:conc}.
Recalling from \eqref{eq:cons0} that
\[
\mathcal{Z}_\delta(\hat{\chi}_\delta)= \min_{\chi \in \Theta \times (0,1]} \mathcal{Z}_{\delta}(\chi) + \mathcal{O}_{\PP}(\delta^{-2}),
\]
the analysis will rely on a convenient decomposition of $\mathcal Z_\delta$.
For $I_{\delta,i}$ and $M_{\delta,i}$ introduced in \eqref{eq:MI}, let
\begin{align*}
	I_{T,\delta}(\theta_-,\theta_+,\theta_\circ,h) &\coloneqq \frac{1}{2}\sum_{i \in [\delta^{-1}] \setminus \{\koo\}} (\theta_{\delta,i}(\lceil h/\delta \rceil) - \theta^0_{\delta,i})^2 I_{\delta,i},\\
	M_{T,\delta}(\theta_-,\theta_+,\theta_\circ,h) &\coloneqq \sum_{i \in [\delta^{-1}] \setminus\{\koo\}} (\theta_{\delta,i}(\lceil h/\delta \rceil) - \theta^0_{\delta,i}) M_{\delta,i}.
\end{align*}
By Lemma \ref{lem:basic_est} and Proposition \ref{lem:remainder}, we can write
\begin{equation}\label{eq:cons1}
	\mathcal{Z}_{\delta}(\theta_-,\theta_+,\theta_\circ,h) = I_{T,\delta}(\theta_-,\theta_+,\theta_\circ,h) - M_{T,\delta}(\theta_-,\theta_+,,\theta_\circ,h) + \mathcal{O}_{\PP}(\delta^{-2}),
\end{equation}
where the $\mathcal{O}_{\PP}(\delta^{-2})$-term is with respect to uniform convergence on $(\Theta \times (0,1], d)$, for $d$ denoting the restriction of the Euclidean metric to $\Theta \times (0,1]$.
For $\chi = (\theta_-,\theta_+,\theta_\circ,h) \in \Theta \times (0,1]$, define the restriction $\chi^\prime \coloneqq (\theta_-,\theta_+,h) \in \Theta^\prime \times (0,1]$, where $\Theta^\prime \coloneqq [\underbars{\theta},\overbar{\theta}]^2$.
Furthermore, let $\chi^0(\delta) = (\theta^0_+(\delta),\theta^0_-(\delta),\theta^0_\circ(\delta),\tau^0)$ and set
\[\theta_{\chi^\prime}(x) \coloneqq \theta_- \one_{(0,h)}(x) + \theta_+\one_{[h,1)}(x), \quad x \in (0,1), \chi^\prime \in \Theta^\prime \times (0,1]. \]
We have the following convergence result in expectation.

\begin{lemma}\label{lem:approxZ}
Suppose  that \eqref{eq:condapprox} holds true.
Then, for
\begin{equation}\label{eq:lim_func}
		\mathcal{Z}(\chi^\prime) \coloneqq \frac{T}{4}\lVert K^\prime \rVert^2_{L^2} \int_0^1 \frac{(\theta_{\chi^\prime}(x) - \theta^\ast(x))^2}{\theta^\ast(x)} \diff{x}, \quad \chi^\prime \in \Theta^\prime \times (0,1],
	\end{equation}
	we have
	\[\lim_{\delta \to 0}\sup_{\chi\in \Theta \times (0,1]} \Big\vert \delta^3 \E[\mathcal{Z}_{\delta}(\chi)] - \mathcal{Z}(\chi^\prime) \Big\vert = 0.\]	
\end{lemma}
\begin{proof}
	See Appendix \ref{app:proof_rem}.
\end{proof}

The key step for proving consistency will consist in verifying that
\[
\sup_{\chi \in \Theta \times (0,1]} \big\lvert\delta^3 \mathcal{Z}_\delta(\chi) - \mathcal{Z}(\chi^\prime)\big\rvert = o_{\PP}(1),
\]
and in view of \eqref{eq:cons1} and \eqref{eq:lim_func},  this task can be broken down into separate investigations of the centered statistics $I_{T,\delta} - \E[I_{T,\delta}]$ and $M_{T,\delta}$.
Exploiting the concentration properties established in Section \ref{sec:conc}, we obtain the following central auxiliary result.

\begin{lemma}\label{lem:aux0}
	It holds
	\begin{align}
		\sup_{\chi \in \Theta \times (0,1]} \big\vert I_{T,\delta}(\chi) - \E[I_{T,\delta}(\chi)] \big\vert &= o_{\PP}\big(\delta^{-3/2} \sqrt{\log(\delta^{-1})}\big),\label{eq:uni_qv} \\
		\sup_{\chi \in \Theta \times (0,1]} \big\vert M_{T,\delta}(\chi) \big\vert &= \mathcal{O}_{\PP}(\delta^{-3/2}).\label{eq:uni_mart}
	\end{align}
\end{lemma}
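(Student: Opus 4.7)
The plan rests on a block decomposition. For $\chi = (\theta_-,\theta_+,\theta_\circ,h) \in \Theta\times(0,1]$ and $k = \lceil h/\delta \rceil$, the coefficient
\[a_i(\chi) \coloneqq \theta_{\delta,i}(k) - \theta_{\delta,i}^0, \qquad i \in [\delta^{-1}]\setminus\{\koo\},\]
is piecewise constant in $i$ with breakpoints at $k$ and $\koo$ only; for $i \notin \{k,\koo\}$ it takes at most three values of the form $\theta_\pm - \theta_\pm^0(\delta)$, each uniformly bounded in $\chi$. Introducing the cumulative processes
\[Q_j \coloneqq \sum_{i\leq j,\, i\neq\koo}\bigl(I_{\delta,i} - \E[I_{\delta,i}]\bigr), \qquad P_j \coloneqq \sum_{i\leq j,\, i\neq \koo} M_{\delta,i}, \qquad j \in [\delta^{-1}],\]
the centred versions of both statistics admit the common representation
\[F_{T,\delta}(\chi) = \sum_{\ell=1}^{3} A_\ell(\chi)\bigl(Y_{j^+_\ell(\chi)} - Y_{j^-_\ell(\chi)}\bigr) + b(\chi)\, Z_k\,\one_{\{k \neq \koo\}},\]
with $Y = Q$ (resp.\ $Y = P$), $|A_\ell(\chi)|, |b(\chi)| \lesssim 1$, and isolated contribution $Z_k = I_{\delta,k} - \E[I_{\delta,k}]$ (resp.\ $Z_k = M_{\delta,k}$). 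Consequently, uniform control in the continuous parameter $\chi$ reduces to controlling $\sup_j|Y_j|$ and $\sup_k|Z_k|$, with no further $\varepsilon$-net over $(\theta_-,\theta_+,\theta_\circ)$ needed.

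For \eqref{eq:uni_qv}, I would apply Proposition~\ref{prop:conc} with $\alpha_i = \one_{\{i\leq j,\, i\neq\koo\}}$, which satisfies $\lVert\alpha\rVert_\infty = 1$, $\lVert\alpha\rVert_{\ell^1}\leq \delta^{-1}$, and $\sum_i\alpha_i\E[I_{\delta,i}] \lesssim \delta^{-3}$ by Lemma~\ref{lem:basic_est}. At the scale $z \asymp \delta^{-3/2}\sqrt{\log\delta^{-1}}$ the linear term of order $\delta^{-3}$ dominates the Bernstein denominator, so the exponent is of order $\log\delta^{-1}$ with a constant as large as desired; a union bound over $j\in[\delta^{-1}]$ then gives $\sup_j|Q_j| = o_{\PP}(\delta^{-3/2}\sqrt{\log\delta^{-1}})$. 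The same inequality with $\alpha_i = \one_{\{i = k\}}$ controls the isolated contribution by $\sup_{k\neq\koo}|I_{\delta,k} - \E[I_{\delta,k}]| = O_{\PP}(\delta^{-1}\sqrt{\log\delta^{-1}})$, which is of strictly smaller order. Combined with the block representation and the uniform boundedness of $A_\ell, b$, this yields \eqref{eq:uni_qv}.

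For \eqref{eq:uni_mart} the absence of the $\sqrt{\log\delta^{-1}}$ factor rules out a direct Bernstein-plus-union argument, and this is the core difficulty. I would exploit the coupling of Proposition~\ref{prop:coupling} and decompose $P_j = \overbar{P}_j + R_j$ with $\overbar{P}_j \coloneqq \sum_{i\leq j,\, i\neq\koo}\overbar{M}_{\delta,i}$. Since the $\overbar{M}_{\delta,i}$ are independent centred Gaussians, $(\overbar{P}_j)_j$ is a square-integrable martingale with terminal variance $\sum_i\overbar{I}_{\delta,i} \lesssim \delta^{-3}$ by Lemma~\ref{lem:basic_est}, and Doob's $L^2$-maximal inequality yields $\sup_j|\overbar{P}_j| = O_{\PP}(\delta^{-3/2})$ \emph{without} any logarithmic loss. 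For the coupling error $R_j$, Cauchy--Schwarz together with Lemma~\ref{lem:basic_est}\ref{lem:basic_est4} shows $L_\ast \coloneqq \sum_i|I_{\delta,i} - \overbar{I}_{\delta,i}| = O_{\PP}(\delta^{-2})$; on $\{L_\ast \leq L\}$, Proposition~\ref{prop:coupling} with $\alpha_i = \one_{\{i\leq j,\,i\neq\koo\}}$ yields
\[\PP\bigl(|R_j| \geq c\delta^{-3/2}\bigr) \leq 2\exp\bigl(-c^2\delta^{-1}/(2L)\bigr),\]
which is \emph{super-exponentially} small in $\delta^{-1}$. A union bound over the $\delta^{-1}$ indices $j$ is then absorbed trivially, and $\sup_j|R_j| = O_{\PP}(\delta^{-3/2})$ follows. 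The isolated martingale term $\sup_{k\neq\koo}|M_{\delta,k}|$ is controlled by the Gaussian maximal inequality applied to $\overbar{M}_{\delta,k} \sim N(0,\overbar{I}_{\delta,k})$ plus a negligible coupling correction, giving $O_{\PP}(\delta^{-1}\sqrt{\log\delta^{-1}}) = o(\delta^{-3/2})$. The decisive step is the uniform control of $R_j$ without the logarithmic factor, which hinges on the Gaussian-type form of the coupling inequality whose variance proxy $L_\ast$ of order $\delta^{-2}$ matches the target rate through $\delta^{-3/2} \asymp \sqrt{\delta^{-1}\cdot L_\ast}$.
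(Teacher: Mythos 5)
Your proposal is correct and mirrors the paper's proof: a block decomposition reducing the supremum over $\chi$ to uniform control of cumulative partial sums, Proposition~\ref{prop:conc} plus a union bound for the quadratic-variation part, and for the martingale part the DDS coupling of Proposition~\ref{prop:coupling} combined with Doob's $L^2$ maximal inequality for the Gaussianized partial sums and a super-exponential bound on the coupling error. One small slip: the variance bound underlying $L_\ast = \mathcal{O}_{\PP}(\delta^{-2})$ needs $\Var(I_{\delta,i})\lesssim\delta^{-2}$ for $i\neq\koo$, which follows from Lemma~\ref{lem:basic_est}(iii) applied with $\alpha=e_i$, not from part (iv), which concerns the excluded index $\koo$.
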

\begin{proof}
	See Appendix \ref{app:proof_conc}.
\end{proof}

We now prove our first main result.

\begin{theorem} \label{theo:cons}
	Suppose that, for some $\theta^\ast_\pm \in [\lowell,\upell]$ with $\theta^\ast_- \neq \theta^\ast_+$,
	we have
	\eqref{eq:condapprox}.
	Then, $(\hat{\chi}_\delta)^\prime - (\chi^0(\delta))^\prime \overset{\PP}{\longrightarrow} 0$.
	Equivalently, $(\hat{\chi}_\delta)^\prime$ is a consistent estimator of $\chi^\ast$.
\end{theorem}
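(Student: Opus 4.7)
The plan is to apply the standard $\argmin$-consistency scheme for M-estimators (cf.~\cite[Theorem 2.12]{kosorok08}) to the rescaled criterion $\delta^3 \mathcal{Z}_\delta$, viewed as a stochastic approximation of the deterministic limit $\mathcal{Z}$. Two inputs are required: (i) the uniform-in-probability approximation $\sup_{\chi \in \Theta \times (0,1]}|\delta^3 \mathcal{Z}_\delta(\chi) - \mathcal{Z}(\chi^\prime)| = o_{\PP}(1)$, and (ii) well-separation of $\chi^{\ast\prime}$ as the unique minimizer of $\mathcal{Z}(\cdot)$ on $\Theta^\prime \times (0,1]$.

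For (i), I would multiply the decomposition \eqref{eq:cons1} by $\delta^3$: the remainder becomes $\mathcal{O}_{\PP}(\delta) = o_{\PP}(1)$; Lemma \ref{lem:aux0} gives $\sup_\chi \delta^3|I_{T,\delta}(\chi) - \E[I_{T,\delta}(\chi)]| = o_{\PP}(\delta^{3/2}\sqrt{\log\delta^{-1}}) = o_{\PP}(1)$ and $\sup_\chi \delta^3|M_{T,\delta}(\chi)| = \mathcal{O}_{\PP}(\delta^{3/2}) = o_{\PP}(1)$; and Lemma \ref{lem:approxZ} delivers the deterministic uniform convergence $\sup_\chi|\delta^3 \E[\mathcal{Z}_\delta(\chi)] - \mathcal{Z}(\chi^\prime)| \to 0$. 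Assembling these three estimates yields (i).

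For (ii), I would compute $\mathcal{Z}(\chi^\prime)$ explicitly; for instance, for $h \leq \tau^0$,
\[\frac{4\mathcal{Z}(\chi^\prime)}{T\lVert K^\prime\rVert_{L^2}^2} = \frac{h(\theta_- - \theta^\ast_-)^2 + (\tau^0 - h)(\theta_+ - \theta^\ast_-)^2}{\theta^\ast_-} + \frac{(1-\tau^0)(\theta_+ - \theta^\ast_+)^2}{\theta^\ast_+},\]
with a symmetric formula for $h > \tau^0$. Since $\tau^0 \in (0,1)$ and $\theta^\ast_- \neq \theta^\ast_+$ by assumption, this quantity vanishes if and only if $\theta_- = \theta^\ast_-$, $\theta_+ = \theta^\ast_+$, and $h = \tau^0$: the last summand forces $\theta_+ = \theta^\ast_+$, the middle one then forces $h = \tau^0$ (using the jump $\theta^\ast_- \neq \theta^\ast_+$), and the first summand forces $\theta_- = \theta^\ast_-$. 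A direct minimization also shows that $\mathcal{Z}(\chi^\prime)$ stays bounded below by a positive constant as $h$ approaches $0$ or $1$, since the middle summand then picks up a contribution of order $\min(\tau^0,1-\tau^0)(\theta^\ast_+ - \theta^\ast_-)^2$. Combining with continuity of $\mathcal{Z}$ and compactness of $\Theta^\prime \times [\epsilon,1-\epsilon]$ for sufficiently small $\epsilon > 0$, this produces the well-separation property $\inf_{\overline{d}(\chi^\prime,\chi^{\ast\prime}) \geq \epsilon} \mathcal{Z}(\chi^\prime) > 0$ for every $\epsilon > 0$.

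To conclude, I would pick the approximating sequence $\chi_\delta \coloneqq (\theta^0_-(\delta),\theta^0_+(\delta),\theta^0_\circ(\delta), \delta\lceil \tau^0/\delta\rceil)$, which satisfies $\chi_\delta^\prime \to \chi^{\ast\prime}$ by \eqref{eq:condapprox}; then (i) and continuity of $\mathcal{Z}$ give $\delta^3 \mathcal{Z}_\delta(\chi_\delta) = \mathcal{Z}(\chi_\delta^\prime) + o_{\PP}(1) = o_{\PP}(1)$, and \eqref{eq:cons0} yields $\delta^3 \mathcal{Z}_\delta(\hat{\chi}_\delta) \leq \delta^3 \mathcal{Z}_\delta(\chi_\delta) + \mathcal{O}_{\PP}(\delta) = o_{\PP}(1)$. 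A second application of (i) then produces $\mathcal{Z}((\hat{\chi}_\delta)^\prime) = o_{\PP}(1)$, and (ii) forces $\overline{d}((\hat{\chi}_\delta)^\prime, \chi^{\ast\prime}) \overset{\PP}{\to} 0$, which in view of \eqref{eq:condapprox} is equivalent to the claim. The main obstacle is the well-separation analysis on the non-compact domain $\Theta^\prime \times (0,1]$; the jump assumption $\theta^\ast_+ \neq \theta^\ast_-$ is decisive, since it both pins down $h = \tau^0$ as the unique optimal location and excludes degenerate near-minimizers when $h$ approaches the endpoints.
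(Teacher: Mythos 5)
Your proposal is correct and follows essentially the same route as the paper: the paper likewise (a) decomposes $\delta^3\mathcal{Z}_\delta$ via \eqref{eq:cons1} and combines Lemma \ref{lem:approxZ} with the uniform stochastic bounds of Lemma \ref{lem:aux0} to get $\sup_\chi\lvert\delta^3\mathcal{Z}_\delta(\chi)-\mathcal{Z}(\chi')\rvert = o_\PP(1)$, (b) exploits the approximate minimizer property \eqref{eq:cons0} with a fixed comparison point whose $\chi'$-projection is $\chi^\ast$, and (c) invokes an identification/well-separation property of $\mathcal{Z}$ driven by $\tau^0\in(0,1)$ and $\theta^\ast_-\neq\theta^\ast_+$. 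The only cosmetic difference is that you state the well-separation directly à la van der Vaart and compute $\mathcal{Z}$ explicitly to verify it, whereas the paper packages the same identification property as a Kosorok-style modulus function $f$ with $\lvert\chi'-\chi^\ast\rvert\leq f(\lvert\mathcal{Z}(\chi')-\mathcal{Z}(\chi^\ast)\rvert)$; these are equivalent devices, so there is no genuine gap.
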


\begin{proof}
	The assumption $(\chi^0(\delta))^\prime \to \chi^\ast$ as $\delta \to 0$ implies that the statements $(\hat{\chi}_\delta)^\prime - (\chi^0(\delta))^\prime \overset{\PP}{\longrightarrow} 0$ and $(\hat{\chi}_\delta)^\prime \overset{\PP}{\longrightarrow} \chi^\ast$ are equivalent.
	We will prove the latter.
	
	\smallskip
	
	Since $\tau^0 \notin \{0,1\}$ and $\theta^\ast_- \neq \theta^\ast_+$, if $\limsup_{m\to \infty} \lvert \chi_m^\prime - \chi^\ast \rvert > 0$ for some sequence $(\chi_m^\prime)$ in $\Theta^\prime \times (0,1]$, it clearly holds for $\mathcal Z$ defined in \eqref{eq:lim_func} that
	\[\limsup_{m \to \infty} \mathcal{Z}(\chi_m^\prime) > 0 = \mathcal{Z}(\chi^\ast).
	\]
	As in the proof of \cite[Theorem 2.12]{kosorok08}, this identification property guarantees the existence of a non-decreasing càdlàg function $f\colon [0,\infty] \to [0,\infty]$ such that $f(0) = 0$ and
	\begin{equation}\label{eq:bound_cons}
		\lvert \chi^\prime - \chi^\ast \rvert \leq f(\lvert \mathcal{Z}(\chi^\prime) - \mathcal{Z}(\chi^\ast)\rvert), \quad \chi^\prime \in \Theta^\prime \times (0,1].
	\end{equation}
	Using that $\chi^\ast$ minimises $\mathcal{Z}$,
	we obtain 
	\[
	\lvert \mathcal{Z}((\hat{\chi}_\delta)^\prime) - \mathcal{Z}(\chi^\ast)\rvert 
	= \mathcal{Z}((\hat{\chi}_\delta)^\prime) - \delta^3\mathcal{Z}_\delta(\hat{\chi}_\delta) + \delta^3\mathcal{Z}_\delta(\hat{\chi}_\delta)  - \mathcal{Z}(\chi^\ast),
	\]
	 and taking into account \eqref{eq:cons0}, \eqref{eq:cons1} and Lemma \ref{lem:approxZ}, this last expression is upper bounded by
	\begin{equation} \label{eq:bound_expdist}
		\begin{split}
			&\mathcal{Z}((\hat{\chi}_\delta)^\prime) - \delta^3\mathcal{Z}_\delta(\hat{\chi}_\delta) + \delta^3\mathcal{Z}_\delta(\theta_-^\ast,\theta_+^\ast,\underbars{\theta}, \tau^0)  - \mathcal{Z}(\chi^\ast) + \mathcal{O}_{\PP}(\delta)\\
			&\quad\leq 2\sup_{\chi \in \Theta \times (0,1]} \big\lvert \delta^3 \mathcal{Z}_\delta(\chi) - \mathcal{Z}(\chi^\prime) \big\rvert + \mathcal{O}_{\PP}(\delta)\\
			&\quad\leq 2\sup_{\chi \in \Theta \times (0,1]} \big\lvert\delta^3 \E[\mathcal{Z}_\delta(\chi)] - \mathcal{Z}(\chi^\prime)\big\rvert + 2\delta^3 \sup_{\chi \in \Theta \times (0,1]} \big\lvert \mathcal{Z}_\delta(\chi) - \E[\mathcal{Z}_\delta(\chi)]\big\rvert + \mathcal{O}_{\PP}(\delta)\\
			&\quad\leq 2\delta^3\sup_{\chi \in \Theta \times (0,1]} \big\vert I_{T,\delta}(\chi) - \E[I_{T,\delta}(\chi)] \big\vert + 2\delta^3\sup_{\chi \in \Theta \times (0,1]} \big\vert M_{T,\delta}(\chi) \big\vert + \mathcal{O}_{\PP}(\delta) + o(1).
		\end{split}
	\end{equation}
	By \eqref{eq:bound_cons} and the properties of $f$, the assertion already follows since Lemma \ref{lem:aux} implies that
	\[
	\sup_{\chi \in \Theta \times (0,1]} \big\vert I_{T,\delta}(\chi) - \E[I_{T,\delta}(\chi)] \big\vert = o_{\PP}(\delta^{-3}), \qquad
	\sup_{\chi \in \Theta \times (0,1]} \big\vert M_{T,\delta}(\chi) \big\vert = o_{\PP}(\delta^{-3}).
	\]
\end{proof}

The sharp uniform bounds in \eqref{eq:uni_qv} and \eqref{eq:uni_mart} motivate our approach to infer optimal convergence rates  presented next.
Define the empirical process $(\mathcal{L}_\delta(\chi))_{\chi \in \Theta \times (0,1]}$ by setting
\begin{align*}
	\delta^{-3}\mathcal{L}_\delta(\chi)
	&\coloneqq \frac{1}{2}\sum_{i= 1}^{\delta^{-1}} (\theta_{\delta,i}(\lceil h/\delta \rceil)- \theta^0_{\delta,i})^2I_{\delta,i} - \sum_{i= 1}^{\delta^{-1}} (\theta_{\delta,i}(\lceil h/\delta \rceil) - \theta^0_{\delta,i})M_{\delta,i} \\
	&\quad\quad\quad - \theta_{\delta,\koo}(\lceil h \slash \delta \rceil) R_{\delta,\koo}(\theta^0_\circ(\delta))\\
	&= \mathcal{Z}_\delta(\chi)  - \theta_{\delta,\koo}(\lceil h \slash \delta \rceil) R_{\delta,\koo}(\theta^0_\circ(\delta)),
\end{align*}
and notice that $\hat{\chi}_\delta$ is the precise minimiser of $\mathcal{L}_\delta$, i.e.,
\begin{equation}\label{eq:min_precise}
	\mathcal{L}_\delta(\hat{\chi}_\delta) = \min_{\chi \in \Theta \times (0,1]} \mathcal{L}_\delta(\chi).
\end{equation}
Define further
\[\tilde{\mathcal{L}}_\delta(\chi)\coloneqq \delta^3 \E[\mathcal{Z}_\delta(\chi)], \quad \chi \in \Theta \times (0,1],\]
and observe that, according to Proposition \ref{lem:remainder}, it holds
\begin{equation}\label{eq:exp_l}
	\tilde{\mathcal{L}}_\delta(\chi) = \E[\mathcal{L}_\delta(\chi)] + \mathcal{O}(\delta^2).
\end{equation}
Moreover, by Lemma \ref{lem:basic_est} and $\tilde{\mathcal L}_\delta(\chi^0(\delta)) = 0$, we obtain the following lower bound,
\begin{equation}\begin{split}\label{eq:exp_low}
		&\tilde{\mathcal{L}}_\delta(\chi) - \tilde{\mathcal{L}}_\delta(\chi^0_\delta)
		\asymp \delta \sum_{i=1}^{\delta^{-1}} (\theta_{\delta,i}(\lceil nh\rceil) - \theta^0_{\delta,i})^2\\
		&\qquad\gtrsim \delta(\lceil nh \rceil \wedge \koo -1) (\theta_- - \theta^0_-(\delta))^2 + (1 - \delta (\lceil nh\rceil \vee \koo)) (\theta_+ - \theta^0_+(\delta))^2 \\
		&\quad\,\quad +  \delta\big(\big\lvert \koo - \lceil nh \rceil \big\rvert -1\big)^+ \Big(\one_{\{\koo < \lceil nh \rceil\}} (\theta_- - \theta^0_+(\delta))^2 + \one_{\{\koo > \lceil nh \rceil\}} (\theta_+ - \theta^0_-(\delta))^2\Big)\\
		&\quad\, \quad+ \delta (\theta^0_\circ(\delta) - \theta_{\delta,\koo}(\lceil nh\rceil))^2 + \delta(\theta_\circ - \theta^0_{\delta,\lceil nh \rceil})^2.
		\end{split}
	\end{equation}
Suppose \textit{for the moment} that we have additional information on the true parameters $\theta^0_\pm$ and $\tau^0$ in the sense that we know that\vspace{-2mm}
\begin{enumerate}\setlength\itemsep{0mm}
	\item[(i)] $\theta^0_\pm \in \Theta_\pm$ for some disjoint intervals $\Theta_\pm \subset [\lowell,\upell]$ that are separated by a magnitude of $\underbars{\eta} > 0$, i.e., for all $\theta_\pm \in \Theta_\pm$, it holds $\lvert \theta_+ - \theta_-\rvert \geq \underbars{\eta}$, and
	\item[(ii)] the change point $\tau^0$ is separated from the boundary by at least $\kappa \in (0,1)$, i.e., $\tau^0 \in [\kappa,1-\kappa]$.
\end{enumerate}  \vspace{-2mm}
We may then change the optimisation domain of the estimator $\hat{\chi}_\delta$ accordingly, obtaining instead of \eqref{eq:min_precise}
\[\mathcal{L}_\delta(\hat{\chi}_\delta) = \min_{\chi \in \Theta_- \times \Theta_+ \times [\lowell,\upell] \times [\kappa,1-\kappa]} \mathcal{L}_\delta(\chi).\]
Then, for any $\chi \in \Theta_- \times \Theta_+ \times [\lowell,\upell] \times [\kappa,1-\kappa]$ and sufficiently small $\delta$, it follows from \eqref{eq:exp_low}
\begin{equation} \label{eq:low_sep}
	\tilde{\mathcal{L}}_\delta(\chi) - \tilde{\mathcal{L}}_\delta(\chi^0(\delta)) \gtrsim \kappa (\theta_- - \theta^0_-(\delta))^2 + \kappa (\theta_+ - \theta^0_+(\delta))^2 + \underbars{\eta}^2 \big(\lvert [\tau^0]_\delta - [h]_\delta \rvert -\delta \big)^+,
\end{equation}
where we denote for $x \in \R$ by $[x]_\delta = \delta \lceil x/\delta \rceil$ its (upper) $\delta$-approximation.
Similarly to \eqref{eq:bound_expdist}, taking into account \eqref{eq:min_precise} and \eqref{eq:exp_l}, we derive
\begin{align*}
	\tilde{\mathcal{L}}_\delta(\hat{\chi}_\delta) - \tilde{\mathcal{L}}_\delta(\chi^0(\delta)) &\lesssim 2\delta^3\sup_{\chi \in \Theta \times (0,1]} \big\vert I_{T,\delta}(\chi) - \E[I_{T,\delta}(\chi)] \big\vert + 2\delta^3\sup_{\chi \in \Theta \times (0,1]} \big\vert M_{T,\delta}(\chi) \big\vert\\
	&\quad+ \delta^3 \sqrt{\mathrm{Var}(I_{\delta,\koo})} + \delta^3 \sqrt{\mathrm{Var}(M_{\delta,\koo})} + \delta^3 \sqrt{\mathrm{Var}(R_{\delta,\koo}(\theta^0_\circ))} + \mathcal{O}(\delta^2).
\end{align*}
Using the uniform bounds \eqref{eq:uni_qv} and \eqref{eq:uni_mart} as well as the  bounds from Proposition \ref{lem:remainder} and Lemma \ref{lem:basic_est}, it follows
\[\tilde{\mathcal{L}}_\delta(\hat{\chi}_\delta) - \tilde{\mathcal{L}}_\delta(\chi^0(\delta)) = o_{\PP}(\delta^{3/2} \sqrt{\log(\delta^{-1})}).\]
In view of the lower bound \eqref{eq:low_sep}, this translates to the following estimation rates for the parameters,
\[\lvert \theta_\pm - \hat{\theta}^\delta_{\pm} \rvert = o_{\PP}(\delta^{3/4} \log(\delta^{-1})^{1/4}) \quad \text{and} \quad \lvert \tau^0 - \hat{\tau}^\delta \rvert =  \mathcal{O}_{\PP}(\delta).\]
In order to get rid of the separability assumption stated above in (ii) and to sharpen the convergence rate bound for the diffusivity estimators, a more careful analysis of the local fluctuations of the empirical process is necessary. To this end, on the basis of the consistency result stated in the previous theorem, we make use of a \textit{peeling device}.
Since the processes $\mathcal{L}_\delta$ do not have constant expectation, we need a slight generalisation of such a classical convergence result from empirical process theory, given as Theorem 14.4 in \cite{kosorok08}.

\begin{theorem}\label{theo:emp}
	Let $(L_\delta(\chi))_{\delta \in 1/\N}$ be a sequence of stochastic processes, indexed by a pseudometric space $(\mathcal{X},d)$, and, for any $\delta \in 1/\N$, let $\tilde{L}_\delta \colon \mathcal{X} \to \R$ be a deterministic function and $\chi^0_\delta \in \mathcal{X}$.
	Assume that, for $\delta$ small enough, there exist some constants $\kappa, c_1 > 0$ independent of $\delta$ such that, for any $\chi \in B_{d}(\chi^0_\delta, \kappa)$, we have
	\begin{equation}\label{eq:condemp1}
		\tilde{L}_\delta(\chi) - \tilde{L}_\delta(\chi^0_\delta)  \geq c_1 \tilde{d}_\delta^2(\chi,\chi^0_\delta).
	\end{equation}
	Here, $\tilde{d}_\delta\colon \mathcal{X} \times \mathcal{X} \to [0,\infty)$ are such that for any $\varepsilon > 0$ there exists $\varepsilon^\prime > 0$ s.t.\ for all $\delta$ small enough, $d(\chi,\chi^0_\delta) \leq \varepsilon^\prime$ implies $\tilde{d}_\delta(\chi,\chi^0_\delta) \leq \varepsilon$ for any $\chi \in \mathcal{X}$.
	Suppose also that, for all $\delta, \varepsilon > 0$ small enough, we have
\begin{equation}\label{eq:condemp2}
\E^\ast\Big[\sup_{\tilde{d}_\delta(\chi,\chi^0_\delta) < \varepsilon} \big\lvert (L_\delta - \tilde{L}_\delta)(\chi) - (L_\delta - \tilde{L}_\delta)(\chi^0_\delta) \big\rvert \Big] \leq c_2 \psi_\delta(\varepsilon)
\end{equation}
for some $c_2 > 0$ and functions $\psi_\delta$ such that $\varepsilon \mapsto \psi_\delta(\varepsilon)/\varepsilon^\alpha$ is decreasing for some $\alpha < 2$ not depending on $\delta$.
	Let $r_\delta$ be such that, for all $\delta \in 1/\N$ and some $c_3 > 0$, we have
	\begin{equation}\label{eq:condemp3}
		r_\delta^2 \psi_\delta(r_\delta^{-1}) \leq c_3.
	\end{equation}
	If the sequence $(\hat{\chi}_\delta)_{\delta \in 1/\N}$ satisfies  $L_\delta(\hat{\chi}_\delta) \leq \inf_{\chi \in \mathcal{X}} L_\delta(\chi) + \mathcal{O}_{\PP}(r_\delta^{-2})$ and $d(\hat{\chi}_\delta, \chi^0_\delta) \overset{\PP^\ast}{\longrightarrow} 0$, then $ \tilde{d}_\delta(\hat{\chi}_\delta,\chi^0_\delta) = \mathcal{O}_{\PP}(r_\delta^{-1})$.
\end{theorem}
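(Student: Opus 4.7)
The plan is to adapt the classical peeling (or shell) argument of van der Vaart–Wellner/Kosorok to the $\delta$-varying setting. Fix $M>0$ large, to be chosen later, and decompose the event $\{\tilde d_\delta(\hat\chi_\delta,\chi^0_\delta)>M/r_\delta\}$ into dyadic shells
\[
S_{j,\delta} \coloneqq \bigl\{\chi\in\mathcal X : 2^{j-1}/r_\delta < \tilde d_\delta(\chi,\chi^0_\delta)\le 2^j/r_\delta\bigr\}, \quad j\ge J_M \coloneqq \lceil \log_2 M\rceil.
\]
First, I would exploit the $d$–$\tilde d_\delta$ compatibility in the hypotheses together with consistency $d(\hat\chi_\delta,\chi^0_\delta)\to_\PP 0$: pick $\varepsilon'>0$ such that $d(\chi,\chi^0_\delta)\le\varepsilon'$ implies $\tilde d_\delta(\chi,\chi^0_\delta)\le\kappa$ for all small $\delta$. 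Then on the high-probability event $A_\delta \coloneqq \{d(\hat\chi_\delta,\chi^0_\delta)\le\varepsilon'\}$, we have $\hat\chi_\delta\in B_d(\chi^0_\delta,\kappa)$, so the curvature bound \eqref{eq:condemp1} is applicable along $\hat\chi_\delta$. It is enough to restrict the shell index to those $j$ with $2^j/r_\delta\le\kappa$; shells beyond this threshold are covered by $A_\delta^c$, which has vanishing probability.

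Next, on a shell $S_{j,\delta}$ (with $j\le\log_2(\kappa r_\delta)$), assume $\hat\chi_\delta\in S_{j,\delta}\cap A_\delta$. The curvature \eqref{eq:condemp1} yields
\[
\tilde L_\delta(\hat\chi_\delta)-\tilde L_\delta(\chi^0_\delta)\ge c_1\,4^{j-1}r_\delta^{-2},
\]
while the near-minimization hypothesis gives $L_\delta(\hat\chi_\delta)-L_\delta(\chi^0_\delta)\le C\, r_\delta^{-2}$ with probability at least $1-\eta$ for $C=C(\eta)$ large. Subtracting these two inequalities,
\[
\sup_{\chi\in S_{j,\delta}}\bigl|(L_\delta-\tilde L_\delta)(\chi)-(L_\delta-\tilde L_\delta)(\chi^0_\delta)\bigr|\ge c_1\,4^{j-1}r_\delta^{-2}-C\,r_\delta^{-2},
\]
which, for $j\ge J_M$ with $M$ large enough, dominates $\tfrac12 c_1 4^{j-1}r_\delta^{-2}$.

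Finally, I would apply Markov's inequality together with the modulus bound \eqref{eq:condemp2} (enlarging $S_{j,\delta}$ slightly to the ball $\{\tilde d_\delta<2^j/r_\delta\}$) and then invoke the rate calibration \eqref{eq:condemp3} via the monotonicity of $\varepsilon\mapsto\psi_\delta(\varepsilon)/\varepsilon^\alpha$:
\[
\psi_\delta(2^j/r_\delta)\le 2^{j\alpha}\psi_\delta(1/r_\delta)\le c_3\,2^{j\alpha}\,r_\delta^{-2}.
\]
This bounds the shell probability by $c\cdot 2^{j(\alpha-2)}$ with a constant depending only on $c_1,c_2,c_3,\alpha$. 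Summing the geometric series over $j\ge J_M$, which converges because $\alpha<2$, plus the consistency remainder $\PP(A_\delta^c)$ and the probability $\eta$ that the near-minimization fails, gives
\[
\PP\bigl(\tilde d_\delta(\hat\chi_\delta,\chi^0_\delta)>M/r_\delta\bigr)\le \frac{c}{2^{J_M(2-\alpha)}}+\PP(A_\delta^c)+\eta,
\]
which is arbitrarily small by choosing $M$ large, $\eta$ small, and $\delta$ small; this is exactly $\tilde d_\delta(\hat\chi_\delta,\chi^0_\delta)=\mathcal O_\PP(r_\delta^{-1})$.

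The main obstacle is careful bookkeeping of the $\delta$-dependence: the pseudometric $\tilde d_\delta$, the functions $\tilde L_\delta$, and the modulus $\psi_\delta$ all change with $\delta$, so the curvature, the modulus inequality, and the peeling threshold must be chosen uniformly in $\delta$. The compatibility hypothesis between $d$ and $\tilde d_\delta$ is precisely what makes the passage from $d$-consistency to applicability of the curvature bound legitimate, and the monotonicity property of $\psi_\delta(\varepsilon)/\varepsilon^\alpha$ with a $\delta$-independent $\alpha<2$ is what keeps the geometric sum convergent across all $\delta$. Once these uniformities are in place, the argument proceeds exactly as in the classical case.
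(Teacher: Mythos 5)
Your proof is correct and is precisely the peeling (shell) argument of Kosorok's Theorem 14.4 / van der Vaart--Wellner Theorem 3.2.5, adapted to the $\delta$-dependent $\tilde d_\delta$, $\tilde L_\delta$, $\psi_\delta$; this is exactly what the paper has in mind when it omits the proof as a "straightforward adaptation of the proof given in \cite{kosorok08}". The only cosmetic point is that the restriction of the shell index to $2^j/r_\delta \le \kappa$ is not strictly needed for convergence of the geometric sum --- its real role (which you also identify) is that on $A_\delta$ one has $\hat\chi_\delta \in B_d(\chi^0_\delta,\kappa)$ so the curvature bound applies, and the compatibility hypothesis then caps the reachable shells.
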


The proof is a straightforward adaptation of the proof given in \cite{kosorok08} and therefore omitted.
We will apply this general result to the sequence of empirical processes $(\mathcal{L}_\delta)_{\delta \in \N}$ and the expectation proxys $(\tilde{\mathcal{L}}_\delta)_{\delta \in 1/\N}$ introduced above, as well as the function
\begin{align*}
&\tilde{d}^2_\delta((x^{(1)}_-,x^{(1)}_+,x^{(1)}_\circ, h^{(1)}), (x^{(2)}_-,x^{(2)}_+,x^{(2)}_\circ,h^{(2)}))\\
	&\,= \lvert x^{(1)}_- - x^{(2)}_- \rvert^2 + \lvert x^{(1)}_+ - x^{(2)}_+ \rvert^2 + \big(\lvert [h^{(1)}]_\delta - [h^{(2)}]_\delta \rvert - \delta\big)^+ + \delta \lvert x^{(1)}_\circ - x^{(2)}_\circ\rvert^2\one_{\{[h^{(1)}]_\delta = [h^{(2)}]_\delta\}}\\
	&\,\quad + \delta \big(\lvert x^{(1)}_+ - x^{(2)}_\circ\rvert^2 + \lvert x^{(1)}_\circ - x^{(2)}_- \rvert^2\big) \one_{\{[h^{(1)}]_\delta < [h^{(2)}]_\delta\}}\\
	&\,\quad + \delta \big(\lvert x^{(2)}_+ - x^{(1)}_\circ\rvert^2 + \lvert x^{(2)}_\circ - x^{(1)}_- \rvert^2\big) \one_{\{[h^{(1)}]_\delta > [h^{(2)}]_\delta\}},
\end{align*}
where $[h]_\delta \coloneqq \delta\lceil\delta^{-1}h\rceil$.

\smallskip

The following result is central to verifying condition \eqref{eq:condemp2} of the consistency theorem in our setting, cf.\ Corollary \ref{coro:aux} below.
Note that the involved supremum is measurable, allowing us to work with $\mathbb E[ \cdot ]$ instead of the outer expectation $\mathbb E^\ast[\cdot]$.

\begin{lemma}\label{lem:aux}
	For sufficiently small $\delta \in 1/\N$, we have, for any $\varepsilon \leq 1$,
	\begin{align*}
		&\E\Big[\sup_{\tilde{d}_\delta(\chi,\chi^0(\delta)) < \varepsilon} \big\lvert \delta^3 (\mathcal{Z}_\delta - \E[\mathcal{Z}_\delta])(\chi) - \delta^3 (\mathcal{Z}_\delta - \E[\mathcal{Z}_\delta])(\chi^0(\delta)) \big\rvert \Big] \\
		&\lesssim  \delta^3 + \delta^{1/2}\varepsilon^3 +  \delta \varepsilon^2 + \delta^{3/2} \varepsilon \eqqcolon \tilde{\psi}_\delta(\varepsilon).
	\end{align*}
\end{lemma}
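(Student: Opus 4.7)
The plan is to decompose the centred process into quadratic-variation and martingale parts,
\[\delta^3(\mathcal Z_\delta - \E[\mathcal Z_\delta])(\chi) = A_\delta(\chi) + B_\delta(\chi),\]
where $A_\delta(\chi) = \tfrac{\delta^3}{2}\sum_i \alpha_i(\chi)(I_{\delta,i} - \E[I_{\delta,i}])$ with $\alpha_i(\chi) := (\theta_{\delta,i}(\lceil h/\delta\rceil) - \theta^0_{\delta,i})^2$, and $B_\delta(\chi) = -\delta^3\sum_i \beta_i(\chi) M_{\delta,i}$ with $\beta_i(\chi) := \theta_{\delta,i}(\lceil h/\delta\rceil) - \theta^0_{\delta,i}$. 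Since $\alpha_i(\chi^0(\delta)) = \beta_i(\chi^0(\delta)) = 0$, the increment inside the supremum equals $A_\delta(\chi)+B_\delta(\chi)$, so it suffices to bound the expected suprema of $A_\delta$ and $B_\delta$ separately.

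The first step is a coefficient dictionary: for every $\chi$ with $\tilde d_\delta(\chi,\chi^0(\delta))<\varepsilon$, I partition $[n]$ into the left bulk $\{i < \min(\lceil h/\delta\rceil,\koo)\}$, the right bulk $\{i > \max(\lceil h/\delta\rceil,\koo)\}$, the mismatch interval between $\lceil h/\delta\rceil$ and $\koo$, and the two singletons $\{\lceil h/\delta\rceil, \koo\}$. Reading off the definition of $\tilde d_\delta^2$ term by term yields estimates of the shape
\[\|\alpha(\chi)\|_\infty \lesssim 1,\quad \|\alpha(\chi)\|_{\ell^1} \lesssim 1+\delta^{-1}\varepsilon^2,\quad \|\alpha(\chi)\|_{\ell^2}^2 \lesssim \delta^{-1}\varepsilon^4+\eta^4,\quad \|\beta(\chi)\|_{\ell^2}^2 \lesssim \delta^{-1}\varepsilon^2,\]
where the $O(\eta^2)$ mismatch contribution is absorbed by the $(|[h]_\delta-[\tau^0]_\delta|-\delta)^+$-term in $\tilde d_\delta^2$, while the singletons $i=\lceil h/\delta\rceil$ and $i=\koo$ are controlled by the $\sqrt\delta$-weighted $\theta_\circ$-terms, which is exactly their raison d'\^etre in $\tilde d_\delta$.

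Pointwise concentration of $A_\delta(\chi)$ follows from Proposition~\ref{prop:conc} applied to the indices $i\neq\koo$ combined with Lemma~\ref{lem:basic_est}\ref{lem:basic_est4} for the singleton $i=\koo$, giving a Bernstein tail with sub-Gaussian scale $\delta^2\sqrt{\|\alpha\|_\infty\|\alpha\|_{\ell^1}}$ and sub-exponential scale $\delta^3\|\alpha\|_\infty$. For $B_\delta(\chi)$ I invoke Proposition~\ref{prop:coupling} to replace the dependent vector $(M_{\delta,i})$ by its Gaussian proxy $(\overbar M_{\delta,i})$, whose variance $\sum_i \beta_i^2 \overbar I_{\delta,i}\asymp \|\beta\|_{\ell^2}^2\delta^{-2}$ is small in $\varepsilon$, and control the coupling error by a second application of Proposition~\ref{prop:conc} to the non-negative weights $\beta_i^2$. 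Inserting the dictionary yields pointwise standard deviations of order $\delta^{3/2}\varepsilon$ for $B_\delta$ and $\delta^{3/2}\varepsilon \vee \delta^2$ for $A_\delta$.

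To promote these pointwise estimates to a uniform bound I exploit that $h$ enters only through $k=\lceil h/\delta\rceil$ (at most $O(1+\varepsilon^2/\delta)$ values inside the $\tilde d_\delta$-ball) and that for fixed $k$ the maps $\chi\mapsto \alpha_i(\chi)$, $\chi\mapsto \beta_i(\chi)$ are quadratic resp.\ affine, hence Lipschitz, in $(\theta_-,\theta_+,\theta_\circ)\in[\lowell,\upell]^3$. A standard $r$-net argument on this three-dimensional compact set together with a union bound over $k$ introduces only logarithmic factors. The four final terms arise as: $\delta^{3/2}\varepsilon$ from the dominant $B_\delta$-fluctuation; $\delta\varepsilon^2$ from the analogous $A_\delta$-fluctuation in the sub-Gaussian regime; $\delta^{1/2}\varepsilon^3$ from the entropy integral over the three-dimensional $\theta$-ball, crossing into the sub-exponential regime of Proposition~\ref{prop:conc}; and $\delta^3$ absorbs the residual contribution from the change-point index $\koo$. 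The main obstacle is precisely this simultaneous accounting: the bulk fluctuations must be proven small in $\varepsilon$, whereas the isolated change-point fluctuation is only small in $\delta$, and it is the $\sqrt\delta$-weighted design of $\tilde d_\delta$ that reconciles both, ultimately producing the sharp $\delta^{3/2}$-rate for the diffusivity estimators.
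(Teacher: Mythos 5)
Your plan—split the centred increment into a quadratic–variation part $A_\delta$ and a martingale part $B_\delta$, partition the index set into bulk, mismatch interval and the two change–point singletons, and then feed $\ell^p$-norms of the coefficient vectors into Proposition~\ref{prop:conc}, Proposition~\ref{prop:coupling} and Lemma~\ref{lem:basic_est}—is exactly the paper's strategy, and your coefficient dictionary for $\|\alpha\|_{\ell^1}$ and $\|\beta\|_{\ell^2}$ is in the right ballpark. But there is a genuine gap in the dictionary entry $\|\alpha(\chi)\|_\infty\lesssim 1$: it leads to a claimed pointwise sub-Gaussian scale of order $\delta^2\sqrt{\|\alpha\|_\infty\|\alpha\|_{\ell^1}}\asymp \delta^2\vee\delta^{3/2}\varepsilon$ for $A_\delta$, and for $\varepsilon$ with $\delta^{3/2}\lesssim\varepsilon\lesssim\sqrt\delta$ the term $\delta^2$ strictly exceeds the lemma's bound $\tilde\psi_\delta(\varepsilon)\asymp\delta^{3/2}\varepsilon$ (e.g., $\varepsilon=\delta^{3/4}$: $\delta^2$ versus $\delta^{9/4}$). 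The missing ingredient is the case distinction used in the paper via \eqref{eq:eps_size} and \eqref{eq:eps_size1}: when $\varepsilon^2<\delta/2$ the inequality $\tilde d_\delta(\chi,\chi^0(\delta))<\varepsilon$ forces $\lceil h/\delta\rceil\in\{\koo-1,\koo,\koo+1\}$, so the mismatch interval is empty and the only non-bulk indices are the singletons, whose coefficients are $\lesssim\varepsilon^2/\delta$ by the $\sqrt\delta$-weighting of the $\theta_\circ$ terms in $\tilde d_\delta$. That refines the dictionary to $\|\alpha\|_\infty\lesssim 1\wedge(\varepsilon^2/\delta)$, which collapses the pointwise scale to $\delta^{3/2}\varepsilon$ uniformly in $\varepsilon\le1$. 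You verbally acknowledge at the end that the change–point fluctuation needs the $\sqrt\delta$-weighting, but the stated dictionary does not implement it, so the argument as written does not prove the lemma in the small-$\varepsilon$ regime.

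Two smaller points are worth noting. First, the $r$-net over the three-dimensional $\theta$-cube is unnecessary work: for each fixed $k=\lceil h/\delta\rceil$ the coefficients $\alpha_i(\chi)$, $\beta_i(\chi)$ take at most five distinct values (one per group in your partition), so the supremum over $\theta$ factors out as a deterministic multiplicative bound, which is precisely how the paper avoids any metric–entropy computation. Second, and related, the claim that your uniform bound "introduces only logarithmic factors" is left unsubstantiated; since $\tilde\psi_\delta$ contains no logarithms you must verify that $\sqrt{\log(1+\varepsilon^2/\delta)}$ is absorbed by the polynomial terms $1+\varepsilon^2/\delta$ already present (it is, via $\sqrt{\log x}\lesssim x$ for $x\ge1$, and this is what produces the $\delta\varepsilon^2$ and $\delta^{1/2}\varepsilon^3$ terms from the union over $k$—not, as you write, from the entropy integral over the $\theta$-ball). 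The paper side-steps this by integrating the Bernstein tail against the polynomial prefactor for $A_\delta$ and by bounding $\E[\sup_k]$ through summation plus Cauchy–Schwarz for $B_\delta$.
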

\begin{proof}
	See Appendix \ref{app:proof_rem}.
\end{proof}

\begin{corollary} \label{coro:aux}
	For sufficiently small $\delta \in 1/\N$, we have, for any $\varepsilon \leq 1$ and $\gamma \in [0,3], \varrho \in [0,2]$,
	\[\E\Big[\sup_{\tilde{d}_\delta(\chi,\chi^0(\delta)) < \varepsilon} \big\lvert (\mathcal{L}_\delta - \tilde{\mathcal{L}}_\delta)(\chi) - ( \mathcal{L}_\delta - \tilde{\mathcal{L}}_{\delta})(\chi^0(\delta)) \big\rvert \Big] \lesssim  \delta^3 + \delta^{1/2} \varepsilon^\gamma + \delta \varepsilon^{\varrho} + \delta^{3/2} \varepsilon \eqqcolon \psi^{\gamma,\varrho}_\delta(\varepsilon).\]
\end{corollary}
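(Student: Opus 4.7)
The strategy is to isolate the contribution of the remainder $R_{\delta,\koo}(\theta^0_\circ(\delta))$ from the decomposition $\mathcal{L}_\delta = \delta^3(\mathcal Z_\delta - \theta_{\delta,\koo}(\lceil h/\delta\rceil)R_{\delta,\koo}(\theta^0_\circ(\delta)))$ and to deduce the bound for the $\mathcal Z_\delta$-part directly from Lemma~\ref{lem:aux}. Using $\tilde{\mathcal L}_\delta(\chi) = \delta^3\E[\mathcal Z_\delta(\chi)]$ together with the identity $\theta_{\delta,\koo}(\koo) = \theta^0_\circ(\delta)$ at $\chi = \chi^0(\delta)$, I would write the pointwise decomposition
\begin{align*}
&(\mathcal{L}_\delta - \tilde{\mathcal{L}}_\delta)(\chi) - (\mathcal{L}_\delta - \tilde{\mathcal{L}}_\delta)(\chi^0(\delta))\\
&\quad = \delta^3\big[(\mathcal{Z}_\delta - \E\mathcal{Z}_\delta)(\chi) - (\mathcal{Z}_\delta - \E\mathcal{Z}_\delta)(\chi^0(\delta))\big] - \delta^3\big[\theta_{\delta,\koo}(\lceil h/\delta\rceil) - \theta^0_\circ(\delta)\big]\,R_{\delta,\koo}(\theta^0_\circ(\delta)).
\end{align*}
By Lemma~\ref{lem:aux}, the expected supremum of the first summand is bounded by $\tilde\psi_\delta(\varepsilon) = \delta^3 + \delta^{1/2}\varepsilon^3 + \delta\varepsilon^2 + \delta^{3/2}\varepsilon$, and since $\varepsilon \leq 1$, the elementary inequalities $\varepsilon^3 \leq \varepsilon^\gamma$ and $\varepsilon^2 \leq \varepsilon^\varrho$ for $\gamma \leq 3$ and $\varrho \leq 2$ show that $\tilde\psi_\delta(\varepsilon) \leq \psi^{\gamma,\varrho}_\delta(\varepsilon)$.

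For the second summand I would exploit that $R_{\delta,\koo}(\theta^0_\circ(\delta))$ is $\chi$-independent, so the supremum factorises into the deterministic quantity $\sup_{\tilde d_\delta(\chi,\chi^0(\delta))<\varepsilon}|\theta_{\delta,\koo}(\lceil h/\delta\rceil) - \theta^0_\circ(\delta)|$ times $|R_{\delta,\koo}(\theta^0_\circ(\delta))|$. A case distinction based on whether $[h]_\delta$ equals, lies below, or lies above $[\tau^0]_\delta$ identifies $\theta_{\delta,\koo}(\lceil h/\delta\rceil) - \theta^0_\circ(\delta)$ with $\theta_\circ - \theta^0_\circ(\delta)$, $\theta_+ - \theta^0_\circ(\delta)$, or $\theta_- - \theta^0_\circ(\delta)$ respectively, and in each case exactly the corresponding $\delta|\cdot|^2$ summand appears in $\tilde d^2_\delta(\chi,\chi^0(\delta))$. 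This yields the uniform bound $\varepsilon/\sqrt\delta$, independently of which regime we are in.

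The main obstacle---and the very reason the auxiliary parameter $\theta^0_\circ(\delta)$ was built into the estimation procedure---lies in producing the sharpened first-moment estimate
\[\E\big|R_{\delta,\koo}(\theta^0_\circ(\delta))\big| \leq \big|\E R_{\delta,\koo}(\theta^0_\circ(\delta))\big| + \sqrt{\Var\big(R_{\delta,\koo}(\theta^0_\circ(\delta))\big)} \lesssim \delta^{-1}.\]
The naive bound $\E|R_{\delta,\koo}(\theta^\prime)| \lesssim \delta^{-2}$ from Proposition~\ref{lem:remainder}, valid for arbitrary $\theta^\prime \in [\lowell,\upell]$, would only furnish a contribution of order $\delta^{1/2}\varepsilon$, which is incompatible with $\psi^{\gamma,\varrho}_\delta(\varepsilon)$ in the tight regime $\gamma = 3$. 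The improvement by one factor of $\delta$ is precisely where the specific last clause of Proposition~\ref{lem:remainder} on $|\E R_{\delta,\koo}(\theta^0_\circ(\delta))| \lesssim \delta^{-1}$ and the variance bound $\Var(R_{\delta,\koo}) \lesssim \delta^{-2}$ come into play. With this sharper control, the expected absolute value of the second summand is bounded by $\delta^3 \cdot (\varepsilon/\sqrt\delta) \cdot \delta^{-1} = \delta^{3/2}\varepsilon$, which is already a term of $\tilde\psi_\delta(\varepsilon)$ and thus of $\psi^{\gamma,\varrho}_\delta(\varepsilon)$. Summing both contributions yields the claim.
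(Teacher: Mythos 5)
Your proof is correct and follows essentially the same route as the paper: isolate the remainder term $\delta^3(\theta_{\delta,\koo}(\lceil h/\delta\rceil)-\theta^0_\circ(\delta))R_{\delta,\koo}(\theta^0_\circ(\delta))$ from the $\mathcal Z_\delta$-part, bound the latter via Lemma~\ref{lem:aux}, and control the former via $\E\lvert R\rvert\le\lvert\E R\rvert+\sqrt{\Var R}\lesssim\delta^{-1}$ together with the bound $\lvert\theta_{\delta,\koo}(\lceil h/\delta\rceil)-\theta^0_\circ(\delta)\rvert\le\varepsilon/\sqrt\delta$ on the ball. The paper merely splits $R=\E R+(R-\E R)$ in two stages rather than using the one-line moment inequality, but the argument is the same.
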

\begin{proof}
	See Appendix \ref{app:proof_rem}.
\end{proof}

\subsubsection{Derivation of convergence rates}
Having identified the function $\psi_\delta(\cdot)$ determining the rate $r_\delta$ via \eqref{eq:condemp3} and given our previous findings, we are now in a position to give the proof of Theorem \ref{theo:rate}.

\begin{proof}[Proof of Theorem \ref{theo:rate}]
	We verify the conditions of Theorem \ref{theo:emp}.
	Note first that, for $r_\delta \coloneqq \delta^{-3/2}$, $\varepsilon \leq 1$, $\gamma \coloneqq 5/3, \varrho = 4/3$ and $\overbar{\psi}_\delta \coloneqq \psi_\delta^{\gamma,\varrho}$, we have
	\[\overbar{\psi}_\delta(r_{\delta}^{-1}) = 4\delta^3 = 4r_{\delta}^{-2},\]
	and, by Corollary \ref{coro:aux},
	\[\E\Big[\sup_{\tilde{d}_\delta(\chi,\chi^0(\delta)) < \varepsilon} \big\lvert (\mathcal{L}_\delta - \tilde{\mathcal{L}}_\delta)(\chi) - ( \mathcal{L}_\delta - \tilde{\mathcal{L}}_{\delta})(\chi^0(\delta)) \big\rvert \Big] \lesssim \overbar{\psi}_\delta(\varepsilon).\]
	Note also that $\varepsilon \mapsto \overbar{\psi}_\delta(\varepsilon)/\varepsilon^{\gamma}$ is decreasing, and that we know from \eqref{eq:min_precise} that
	\[
	\mathcal{L}_\delta(\hat{\chi}_\delta) = \min_{\chi \in \Theta \times (0,1]} \mathcal{L}_\delta(\chi).
	\]
	Moreover, our consistency result in Theorem \ref{theo:cons} shows that, for the pseudometric $\overline{d}$ defined in \eqref{eq:pseudo}, we have $\overbar{d}(\hat{\chi}_\delta,\chi^0(\delta)) = o_{\PP}(1)$.
	Given Lemma \ref{lem:aux}, it thus only remains to verify \eqref{eq:condemp1}, i.e., we need to show that, for $\delta$ small enough and some $c_1,\kappa > 0$ independent of $\delta$, we have
	\begin{equation}\label{eq:conc_lim}
		\tilde{\mathcal{L}}_\delta(\chi) - \tilde{\mathcal{L}}_{\delta}(\chi^0(\delta)) \geq c_1 \tilde{d}^2_\delta(\chi,\chi^0(\delta)), \quad \chi \in B_{\overline{d}}(\chi^0(\delta), \kappa).
	\end{equation}
	Note here that, using boundedness of $\Theta$,
	\[\tilde{d}_\delta(\chi,\chi^0(\delta))^2 \lesssim \lvert \theta_- -\theta^0_- \rvert^2 + \lvert \theta_+ - \theta^0_+\rvert^2 + \lvert [h]_\delta - [\tau^0]_\delta \rvert + \delta,\]
	whence clearly, for any given $\varepsilon > 0$ we can find $\varepsilon^\prime > 0$ such that, for all $\delta$ small enough, $\overline{d}(\chi,\chi^0(\delta)) < \varepsilon^\prime$ implies $\tilde{d}_\delta(\chi,\chi^0(\delta)) \leq \varepsilon$. Also observe that because of $\lim_{\delta \to 0} \theta^0_-(\delta) = \theta^\ast_- \neq \theta^\ast_+ = \lim_{\delta \to 0} \theta^0_+(\delta)$, we can find $\underbars{\eta} > 0$ and $M > 0$ such that for $\delta \leq 1/M$ it holds $\lvert \eta^0(\delta) \rvert = \lvert\theta^0_+(\delta) - \theta^0_-(\delta) \rvert \geq \underbars{\eta}$.
	Let now $\kappa \coloneqq \min\{1- \tau^0, \tau^0, \underbars{\eta}\}/2$, which is strictly positive since $\tau^0 \notin \{0,1\}$.
	By Lemma \ref{lem:basic_est} and picking up the calculation in \eqref{eq:exp_low},
	for any $(\theta_-,\theta_+, \theta_\circ,h)=\chi \in B_d(\chi^0(\delta),\kappa)$,  we find for $\delta < \kappa/8 \wedge 1/M$
	\begin{align*}
		\tilde{\mathcal{L}}_\delta(\chi) - \tilde{\mathcal{L}}_\delta(\chi^0(\delta)) &\gtrsim
		\frac{\kappa}{4} (\theta_- -\theta^0_-(\delta))^2 + \frac{\kappa}{4} (\theta_+ -\theta^0_+(\delta))^2 + \frac{\underbars{\eta}^2}{4} (\lvert [\tau^0]_\delta - [h]_\delta\rvert - \delta)^+ \\
		&\quad + \delta (\theta^0_\circ(\delta) - \theta_{\delta,\koo}(\lceil nh\rceil))^2 +\delta(\theta_\circ - \theta^0_{\delta,\lceil nh \rceil})^2\\
		&\gtrsim \tilde{d}_\delta(\chi, \chi^0(\delta)),
	\end{align*}
	where  we used $\lvert \eta^0(\delta) \pm (\theta_\pm - \theta^0_{\pm}(\delta)) \rvert \geq \underbars{\eta}/2$ by our choice of $\kappa$. 
	Thus, \eqref{eq:conc_lim} is satisfied, and Theorem \ref{theo:emp} yields
	\[\delta^{-3/2}\tilde{d}_\delta(\hat{\chi}_\delta,\chi^0(\delta)) = \mathcal{O}_{\PP}(1).\]
	By definition of $\tilde{d}_\delta$, this implies $\lvert \hat{\theta}{}^\delta_\pm - \theta^0_\pm(\delta) \rvert = \mathcal{O}_{\PP}(\delta^{3/2})$ and $(\lvert [\tau^0]_\delta - [\hat{\tau}{}^\delta]_\delta \rvert -\delta)^+ = \mathcal{O}_{\PP}(\delta^{3})$. Since
	\[\lvert \hat{\tau}{}^\delta - \tau^0\rvert \leq \delta + \lvert [\tau^0]_\delta - \hat{\tau}{}^\delta \rvert = \delta + \lvert [\tau^0]_\delta - [\hat{\tau}{}^\delta]_\delta \rvert \leq 2\delta + \big(\lvert [\tau^0]_\delta - [\hat{\tau}^\delta]_\delta \rvert - \delta\big)^+,\]
	the latter conclusion now also yields $\lvert \hat{\tau}{}^\delta - \tau^0\rvert = \mathcal{O}_{\PP}(\delta)$.
\end{proof}

\subsection{Change point limit theorem for vanishing jump height}\label{sec:clt}
Our consistency results from the previous section require that the jump height $\eta$ of the diffusivity does not converge to zero, which simplifies the change point identification task. Under a vanishing jump size asymptotics $\eta=\eta(\delta)\to 0$, we can attain the optimal rate $\eta^{-2}\delta^3$ in the model problem, provided that rate is larger than the observation distance $\delta$, i.e., in the regime $\eta=o(\delta)$.
We obtain precise weak convergence properties of the change point estimator in the vanishing jump height regime
\[
\eta = \eta(\delta) \coloneqq \theta_+(\delta) - \theta_-(\delta) \to 0.
\]
Throughout this section, we work in the setting where the diffusivity constants $\theta_\pm(\delta)$ are known, $\theta_+(\delta)>\theta_-(\delta)$ holds  and where
$\lim_{\delta \to 0} \theta_-(\delta) = \lim_{\delta \to 0} \theta_+(\delta) = \difflim$ for some $\difflim \in [\lowell,\upell]$.
Note that, compared to the previous section, we drop the $0$-superscripts in the notation of the true parameters in the following as no confusion will arise from this.
Based on the modified local $\log$-likelihoods
\[\ell_{\delta,i} \coloneqq \theta_\delta(x_i) \int_0^T X^{\Delta}_{\delta,i}(t) \diff{X_{\delta,i}(t)} - \frac{\theta_\delta(x_i)^2}{2}\int_0^T X^{\Delta}_{\delta,i}(t)^2 \diff{t},\]
we employ the same estimation approach as before, but using the known diffusivity parameters.
Hence, we estimate $\tau$ by $\hat\tau = \hat{\tau}^\delta \coloneqq \hat k  \delta$, where
\begin{equation}\label{eq:cpest}
	\begin{split}
		\hat{k} = \hat{k}(\delta) &\coloneqq \argmaxalt_{k=1,\ldots,n} \Big\{\sum_{i=1}^{k} \Big(\theta_-(\delta)\int_0^T X^{\Delta}_{\delta,i}(t) \diff{X_{\delta,i}(t)} - \frac{\theta_-(\delta)^2}{2} \int_0^T X^{\Delta}_{\delta,i}(t)^2 \diff{t}\Big) \\
		&\quad\quad\quad\quad\quad +  \sum_{i=k+1}^{n} \Big(\theta_+(\delta) \int_0^T X^{\Delta}_{\delta,i}(t) \diff{X_{\delta,i}(t)} - \frac{\theta_+(\delta)^2}{2} \int_0^T X^{\Delta}_{\delta,i}(t)^2 \diff{t}\Big)\Big\}.
	\end{split}
\end{equation}
As in the model problem and in Section \ref{sec:est}, it will be convenient to rewrite the estimator.
Recall that $\ko = \ko(\delta) \coloneqq \ceil{\tau/\delta}$.

\begin{lemma}\label{lem:cp}
	We have
	\begin{equation}\label{eq:cp2}
		\hat{k}  = \argmaxalt_{k = 1,\ldots,\delta^{-1}} Z_k,
	\end{equation}
	where
	\[Z_k = \begin{cases} 0, & k = {\ko},\\ \eta\sum_{i=k + 1}^{\ko} \int_0^T X_{\delta,i}^\Delta(t) \diff{B_{\delta,i}(t)} - \frac{\eta^2}{2}\sum_{i=k+1}^{\ko}  \int_0^T X_{\delta,i}^\Delta(t)^2 \diff{t} + \eta R_{\delta,\ko}, & k < \ko,\\ -\eta\sum_{i=\ko+ 1}^{k} \int_0^T X_{\delta,i}^\Delta(t)\diff{B_{\delta,i}(t)} - \frac{\eta^2}{2}\sum_{i=\ko+1}^{k}  \int_0^T X_{\delta,i}^\Delta(t)^2 \diff{t}, & k > \ko,\end{cases}\]
	with $R_{\delta,\ko}$ from \eqref{eq:remain}.
\end{lemma}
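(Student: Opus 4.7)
The strategy is to observe that shifting the functional by a constant in $k$ does not affect the $\argmax$, and then to subtract its value at $k=\koo$. Concretely, writing $L_k$ for the expression being maximised in \eqref{eq:cpest}, the two sums telescope to
\[
L_k - L_{\koo} \;=\; \begin{cases}\displaystyle\sum_{i=k+1}^{\koo}\bigl(\ell_+(i)-\ell_-(i)\bigr), & k<\koo,\\[1ex]\displaystyle-\sum_{i=\koo+1}^{k}\bigl(\ell_+(i)-\ell_-(i)\bigr), & k>\koo,\end{cases}
\]
where $\ell_\pm(i)=\theta_\pm(\delta)\int_0^T X^\Delta_{\delta,i}\,\mathrm{d}X_{\delta,i}-\tfrac12\theta_\pm(\delta)^2\int_0^T X^\Delta_{\delta,i}(t)^2\,\mathrm{d}t$. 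Hence it suffices to evaluate $\ell_+(i)-\ell_-(i)$ in the three cases $i<\koo$, $i>\koo$, $i=\koo$ and show that the resulting expressions assemble into the stated $Z_k$.

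For $i\neq\koo$, Proposition \ref{prop:weaksol} gives the semimartingale representation $\mathrm{d}X_{\delta,i}=\theta_{\mp}(\delta)X^\Delta_{\delta,i}\,\mathrm{d}t+\mathrm{d}B_{\delta,i}$ depending on whether $i<\koo$ or $i>\koo$. Substituting and using $\theta_+^2-\theta_-^2=(\theta_++\theta_-)\eta$ together with the identity $\theta_\pm\theta_{\mp}-\theta_\pm^2/2=\mp\eta^2/2\pm\eta(\theta_\pm-\eta/2)/\ldots$ (a straightforward algebraic reduction) yields
\[
\ell_+(i)-\ell_-(i)=\eta\!\int_0^T\! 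X^\Delta_{\delta,i}\,\mathrm{d}B_{\delta,i}\mp\frac{\eta^2}{2}\!\int_0^T\! X^\Delta_{\delta,i}(t)^2\,\mathrm{d}t,
\]
with the minus sign for $i<\koo$ and the plus sign for $i>\koo$. This immediately produces the claimed form of $Z_k$ for $k>\koo$, and it supplies all terms in the $k<\koo$ formula except for the contribution from $i=\koo$.

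The $i=\koo$ term is the delicate one, because here the third line of \eqref{eq:mild2} applies and no clean $\theta_\pm X^\Delta\,\mathrm{d}t$ drift is available. Using $X_0\equiv 0$ and \eqref{eq:mild2}, $\mathrm{d}X_{\delta,\koo}(t)=\Bigl(\int_0^t\scapro{\Delta_\vartheta S_\vartheta(t-u)K_{\delta,\koo}}{\mathrm{d}W_u}\Bigr)\mathrm{d}t+\mathrm{d}B_{\delta,\koo}(t)$, so
\[
\ell_+(\koo)-\ell_-(\koo)=\eta\!\int_0^T\! X^\Delta_{\delta,\koo}\,\mathrm{d}B_{\delta,\koo}-\frac{\eta^2}{2}\!\int_0^T\! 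X^\Delta_{\delta,\koo}(t)^2\,\mathrm{d}t+\eta R_{\delta,\koo},
\]
provided one verifies the identity
\[
R_{\delta,\koo}=\int_0^T\! X^\Delta_{\delta,\koo}(t)\!\int_0^t\!\scapro{\Delta_\vartheta S_\vartheta(t-u)K_{\delta,\koo}}{\mathrm{d}W_u}\,\mathrm{d}t-\theta_-(\delta)\!\int_0^T\! X^\Delta_{\delta,\koo}(t)^2\,\mathrm{d}t,
\]
i.e.\ that $R_{\delta,\koo}$ in the lemma coincides with $R_{\delta,\koo}(\theta_-(\delta))$ in the sense of \eqref{eq:remain}. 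This is the main obstacle and is obtained by applying the mild solution formula \eqref{eq:mild} together with the self-adjointness of $S_\vartheta$: since $X_0\equiv 0$, one has $X^\Delta_{\delta,\koo}(t)=\scapro{\int_0^t S_\vartheta(t-s)\,\mathrm{d}W_s}{\Delta K_{\delta,\koo}}=\int_0^t\scapro{S_\vartheta(t-s)\Delta K_{\delta,\koo}}{\mathrm{d}W_s}$, so that $\theta_-(\delta)\int_0^T X^\Delta_{\delta,\koo}(t)^2\,\mathrm{d}t$ equals $\theta_-(\delta)\int_0^T X^\Delta_{\delta,\koo}(t)\int_0^t\scapro{S_\vartheta(t-s)\Delta K_{\delta,\koo}}{\mathrm{d}W_s}\,\mathrm{d}t$, which matches precisely the $\theta'$-subtracted integrand in \eqref{eq:remain}.

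Combining the three cases, $L_k-L_{\koo}=Z_k$ with $Z_{\koo}=0$ by definition, and \eqref{eq:cp2} follows since subtracting the $k$-independent quantity $L_{\koo}$ leaves the $\argmax$ unchanged.
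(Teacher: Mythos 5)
Your proof is correct and takes essentially the same approach as the paper: it subtracts the $k$-independent quantity $L_{\koo}$ (the paper does this in two shift steps, you do it in one), splits the telescoped sum into the cases $i<\koo$, $i=\koo$, $i>\koo$, and invokes Proposition \ref{prop:weaksol} together with the algebraic reduction $\eta\theta_{\mp}\mp\frac{\theta_+^2-\theta_-^2}{2}=-\eta^2/2$ and the identification $R_{\delta,\koo}=R_{\delta,\koo}(\theta_-(\delta))$ via $X^\Delta_{\delta,\koo}(t)=\int_0^t\langle S_\theta(t-s)\Delta K_{\delta,\koo},\diff W_s\rangle$. The inline ``identity'' you cite mid-proof is garbled, but the displayed conclusion $\ell_+(i)-\ell_-(i)=\eta\int X^\Delta_{\delta,i}\diff B_{\delta,i}\mp\frac{\eta^2}{2}\int(X^\Delta_{\delta,i})^2\diff t$ is correct, so this is a typographical rather than a mathematical issue.
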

\begin{proof}
	See Appendix \ref{app:proof_clt}.
\end{proof}

Our final main result is the following limit theorem.

\begin{theorem}\label{theo:clt}
	Assume that $\eta= o(\delta)$ and $\delta^{3/2} = o(\eta)$ such that $v_\delta \coloneqq \delta^3/\eta^2\to 0$.
	Then, for a two-sided Brownian motion $(B^{\leftrightarrow}(h),\, h\in \R)$, we have
	\begin{equation}\label{eq:clt}
		v_\delta^{-1}\frac{T  \lVert K^\prime \rVert^2_{L^2}}{2\difflim}(\hat{\tau} - \tau) \overset{\mathrm{d}}{\longrightarrow} \argmin_{h \in \R} \Big\{B^{\leftrightarrow}(h) + \frac{\lvert h \rvert}{2} \Big\}, \quad\text{ as }\delta\to0.
	\end{equation}
\end{theorem}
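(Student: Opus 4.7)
My plan is to reduce the theorem to the standard argmax continuous mapping paradigm for a suitably rescaled objective. I define $c := T\lVert K^\prime\rVert^2_{L^2}/(2\difflim)$ and the natural local scale $a_\delta := \delta^2/\eta^2$, which under the rate hierarchy $\delta^{3/2} = o(\eta)$ and $\eta = o(\delta)$ satisfies $a_\delta \to \infty$ and $a_\delta = o(\delta^{-1})$. The balance between the expected drift $\E[Z_{\ko+m}] \asymp -c|m|/(2a_\delta)$ (see Lemma \ref{lem:basic_est} and Lemma \ref{lem:cp}) and the Gaussian fluctuations of size $(c|m|/a_\delta)^{1/2}$ pins $a_\delta$ down as the correct scale for $\hat k - \ko$. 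Accordingly, I rescale via $\tilde Z_\delta(g) := Z_{\ko + \lfloor g a_\delta\rfloor}$ for $g \in \R$, so that $v_\delta^{-1}(\hat\tau - \tau) = (\hat k - \ko)/a_\delta = \argmax_g \tilde Z_\delta(g)$, and the theorem reduces to showing $c \argmax_g \tilde Z_\delta(g) \overset{\mathrm d}{\longrightarrow} \argmin_h\{B^\leftrightarrow(h) + |h|/2\}$.

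Next I establish functional convergence on compacts: for any fixed $A > 0$, $\tilde Z_\delta \Rightarrow Z$ on $[-A,A]$, where $Z(g) := \sqrt{c}\,B^\leftrightarrow(g) - c|g|/2$. For the quadratic variation contribution $\tfrac{\eta^2}{2}\sum I_{\delta,i}$, the expectation converges uniformly to $c|g|/2$ by Lemma \ref{lem:basic_est}, and the Bernstein-type bound of Proposition \ref{prop:conc} combined with a chaining step upgrades this to uniform convergence in probability. For the martingale contribution $\eta\sum M_{\delta,i}$, I invoke the Dambis--Dubins--Schwarz coupling of Proposition \ref{prop:coupling} to replace the $M_{\delta,i}$ by the independent Gaussians $\overline M_{\delta,i} \sim N(0,\overline I_{\delta,i})$ with $\overline I_{\delta,i} \sim c\delta^{-2}$; the coupling error is $O_{\PP}\big((\eta^2\sum |I_{\delta,i} - \overline I_{\delta,i}|)^{1/2}\big) = o_{\PP}(1)$ by Proposition \ref{prop:conc}, and a Donsker-type invariance principle applied separately on each side of $\ko$ produces the two independent arms of $B^\leftrightarrow$. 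The remainder $\eta R_{\delta,\ko}$, present only on the half-line $\{g < 0\}$, contributes merely a random constant of magnitude $O_{\PP}(\eta^2\delta^{-2}) = o_{\PP}(1)$ by Proposition \ref{lem:remainder} and $\eta = o(\delta)$, and is therefore irrelevant for the argmax.

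To deploy the argmax continuous mapping theorem I must also rule out an escaping maximizer, i.e.\ establish that for every $\varepsilon > 0$ there is $A > 0$ with $\PP(|\hat k - \ko| > Aa_\delta) < \varepsilon$ for all small $\delta$. Because the total number of admissible indices is of order $\delta^{-1} \gg a_\delta$, a naive union bound is insufficient; instead I perform dyadic peeling over rings $R_j := \{2^j Aa_\delta < |m| \leq 2^{j+1}Aa_\delta\}$. On $R_j$ the drift dominates by $c 2^j A/2$ in absolute value, while the coupled Brownian maximum (Proposition \ref{prop:coupling}, via Doob's inequality applied to the DDS-time-changed process) and the Bernstein-controlled deviation of $\sum I_{\delta,i}$ (Proposition \ref{prop:conc}) each stay below a constant multiple of $(c 2^j A)^{1/2}$ except on an event of probability at most $\exp(-c' 2^j A)$. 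Summing the resulting geometric tail localizes the argmax at cost $\exp(-c' A) \to 0$.

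Combining uniform convergence on compacts with the localization and the almost-sure uniqueness of $G^\ast := \argmax_g Z(g)$ (a standard property for two-sided Brownian motion minus an absolute-value drift), the argmax continuous mapping theorem, cf.\ \cite[Theorem 3.2.2]{vdv96}, delivers $\argmax_g \tilde Z_\delta(g) \overset{\mathrm d}{\longrightarrow} G^\ast$. The change of variables $h = cg$ together with the Brownian scaling identity $\sqrt c B^\leftrightarrow(\cdot/c) \overset{\mathrm d}{=} B^\leftrightarrow(\cdot)$ rewrites $cG^\ast \overset{\mathrm d}{=} \argmax_h\{B^\leftrightarrow(h) - |h|/2\}$, and the reflection $B^\leftrightarrow \mapsto -B^\leftrightarrow$ turns this into $\argmin_h\{B^\leftrightarrow(h) + |h|/2\}$, which is exactly \eqref{eq:clt}. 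I expect the localization via dyadic peeling to be the main technical obstacle, as it forces me to combine the sharp martingale coupling of Proposition \ref{prop:coupling} with the Bernstein estimate of Proposition \ref{prop:conc} uniformly across scales, while a crude union bound over all $\delta^{-1}$ indices would fail.
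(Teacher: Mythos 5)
Your overall architecture mirrors the paper's: argmin/argmax continuous mapping via \cite[Theorem 3.2.2]{vdv96}, functional convergence on compacts of the rescaled objective, and a localization (tightness) step to rule out escaping maximizers, all built on Proposition \ref{prop:coupling}, Proposition \ref{prop:conc}, and Proposition \ref{lem:remainder}. Two substantive comments, one neutral and one on a genuine gap.

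The neutral one: for the functional CLT on compacts, you replace $M_{\delta,i}$ by the independent Gaussians $\overbar{M}_{\delta,i}$ via the coupling and then invoke a Donsker-type invariance principle. The paper instead applies the martingale CLT of Liptser--Shiryaev for the finite-dimensional distributions and Billingsley's fourth-moment criterion for tightness (Proposition \ref{prop:martclt}). Both are viable here; yours trades the moment criterion for control of the coupling error on a window of $O(a_\delta)$ indices, which is indeed $o_{\PP}(1)$ since $\eta^2 \sum |I_{\delta,i}-\overbar{I}_{\delta,i}| = \mathcal O_{\PP}(\eta^2 a_\delta \delta^{-1}) = \mathcal O_{\PP}(\delta)$.

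The gap is in the localization via dyadic peeling. On a ring $R_j$ of $N_j \asymp 2^j A a_\delta$ indices, the drift you must beat is of order $2^j A$ in the $Z$-scale, so after multiplying by $\eta$ you need the coupling error $\eta|\sum(M_{\delta,i} - \overbar{M}_{\delta,i})|$ to be $\lesssim 2^j A$ on an event whose complement has geometric probability. Proposition \ref{prop:coupling} bounds this probability by $\exp(-z^2/(2L))$ with $z \asymp 2^j A/\eta$ provided $\sum_{i \in R_j}|I_{\delta,i}-\overbar{I}_{\delta,i}| \le L$. Since $\E\sum_{R_j}|I_{\delta,i}-\overbar{I}_{\delta,i}| \asymp N_j\delta^{-1}$, a meaningful choice of $L$ is $\asymp N_j\delta^{-1}$, and then
\[
\frac{z^2}{2L} \asymp \frac{(2^jA)^2/\eta^2}{N_j\delta^{-1}} = \frac{(2^jA)^2/\eta^2}{(2^jA)\,\delta^2\eta^{-2}\,\delta^{-1}} = \frac{2^jA}{\delta},\quad\text{but after the union bound in }k\text{ over the ring}
\]
you pay a factor $N_j$, and more importantly the event $\sum_{R_j}|I_{\delta,i}-\overbar{I}_{\delta,i}|\le L$ itself only has Markov-type (polynomial in $L$) control because $\sum|I_{\delta,i}-\overbar{I}_{\delta,i}|$ is an $\ell^1$ quantity, to which Proposition \ref{prop:conc} (a bound on signed sums) does not apply; one must go through Cauchy--Schwarz and $\mathrm{Var}(I_{\delta,i})\lesssim\delta^{-2}$, yielding a tail $\lesssim N_j^2\delta^{-2}/L^2$, which is not geometric in $j$. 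The drift-vs-fluctuation balance you identify is correct for the coupled Gaussian part and for the quadratic-variation deviation, but the coupling error itself does not enjoy a geometric tail across dyadic rings. The paper avoids this obstacle by coupling only on a prefix of indices (of length $k_2$, chosen according to whether $\eta = o(\delta/\sqrt{\log(\delta^{-1})})$ or $\eta \gtrsim \delta/\sqrt{\log(\delta^{-1})}$) and then handling the remaining tail with a Girsanov/exponential-martingale argument directly on the uncoupled $M_{\delta,i}$, so that no coupling error arises there at all; see the proof of Proposition \ref{prop:tight}. Your proposal would need to incorporate a comparable device, since the peeling alone does not close the localization step.
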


The proof of Theorem \ref{theo:clt}, which is given at the end of the section, relies on a functional limit theorem, but we need some preparatory technical results first.
The cornerstones of the proof are provided by the argmax theorem presented in Section 3.2.1 in \cite{vdv96}, but considerable effort is needed to fit the underlying idea into our context.
Let us describe the objects which are at the heart of the analysis.
Under the assumptions of Theorem \ref{theo:clt}, $v_\delta \to 0$ and $v_\delta/\delta \to \infty$ as $\delta \to 0$.
In analogy to the piecewise constant processes $M_{T,\delta}(\chi) , I_{T,\delta}(\chi)$ from the previous section, introduce
\[
M^\prime_{T,\delta}(h)\coloneqq\sum_{i=1}^{\ceil{h/\delta}}M_{\delta,i},\qquad
I^\prime_{T,\delta}(h)\coloneqq
\sum_{i=1}^{\ceil{h/\delta}}I_{\delta,i}, \quad h \in [0,1],
\]
with $M_{\delta,i}$ and $I_{\delta,i}$ as defined in \eqref{eq:MI}.
For $h\in\intv\coloneqq [-\tau/v_\delta,(1-\tau)/v_\delta]$, let
\begin{align}\label{eq:Mtau}
	M_{T,\delta}^\tau(h)&\coloneqq  M^\prime_{T,\delta}(\tau+hv_\delta)\\\nonumber
	&\qquad-\Big(M^\prime_{T,\delta}(\tau) - \int_0^T X^{\Delta}_{\delta,\ko}(t) \diff{B_{\delta,\ko}}(t)  \one_{\{\ceil{\tau/\delta} - \ceil{(\tau+hv_{\delta})/\delta} > 0\}}\Big),\\\label{eq:Itau}
	I_{T,\delta}^\tau(h)&\coloneqq \Big\lvert I^\prime_{T,\delta}(\tau+hv_\delta)-\Big(I^\prime_{T,\delta}(\tau) - \int_0^T X^{\Delta}_{\delta,\ko}(t)^2  \diff{t} \one_{\{\ceil{\tau/\delta} - \ceil{(\tau+hv_{\delta})/\delta} > 0\}}\Big)\Big\rvert.
\end{align}
For $h \notin \intv$, we simply set $M^\tau_{T,\delta}(h) = I^{\tau}_{T,\delta}(h) = 1$.
Note that no term involving an observation block around the change point is present in the processes $M^{\tau}_{T,\delta}(h)$ and $I^{\tau}_{T,\delta}(h)$ and that we can write
\begin{align}\nonumber
	\max_{k=1,\ldots, \delta^{-1}} Z_k &= \max_{h \in \intv} \Big\{-\Big(\eta M^{\tau}_{T,\delta}(h) + \frac{\eta^2}{2} I^{\tau}_{T,\delta}(h) \Big) + \big(\eta M_{\delta,\ko} - \frac{\eta^2}{2} I_{\delta,\ko} + \eta R_{\delta,\ko}\big)\one_{\{h < 0\}} \Big\}\\\nonumber
	&=-\min_{h \in \R} \Big\{\eta M^{\tau}_{T,\delta}(h) + \frac{\eta^2}{2} I^{\tau}_{T,\delta}(h) \Big\} + \mathcal{O}_{\PP}(\eta^2\delta^{-2})\\\nonumber
	&=-\min_{h \in \R} \Big\{\eta M^{\tau}_{T,\delta}(h) + \frac{\eta^2}{2} I^{\tau}_{T,\delta}(h) \Big\} + \mathcal{O}_{\PP}(\eta^2\delta^{-2})\\\label{eq:Z_delta}
	&\eqqcolon -\min_{h \in \R} \mathcal{Z}_{\delta}(h) + \mathcal{O}_{\PP}(\eta^2\delta^{-2}),
\end{align}
where the $\mathcal{O}_{\PP}$ term is with respect to uniform convergence and follows from the facts that $I_{\delta,\ko} = \mathcal{O}_{\PP}(\delta^{-2})$ and $\eta R_{\delta,\ko} = \mathcal{O}_{\PP}(\eta^2\delta^{-2})$ according to Lemma \ref{lem:basic_est} and Proposition \ref{lem:remainder}, respectively.
For the second equality, we used that $M^\tau_{T,\delta}(h) = I^{\tau}_{T,\delta}(h) = 1$ for $h \in \R \setminus \intv$ and $M^\tau_{T,\delta}(0) = I^{\tau}_{T,\delta}(0) = 0$, which guarantees that the minimum is attained in $\intv$.

The argmax theorem requires to establish weak convergence of the sequence $(\mathcal Z_\delta(h),h\in\R)$ to a limit process $(\mathcal Z(h),h\in\R)$ with respect to the topology of uniform convergence on compacts.
For doing so, we will exploit the fact that $M_{T,\delta}^\tau(h)$ is a continuous martingale in $T$ with quadratic variation $I_{T,\delta}^\tau(h)$.
Regarding convergence of the finite-dimensional distributions, we might therefore refer to a classical martingale CLT \cite[Theorem 5.5.4]{liptser89} which asserts that, for fixed $h \in \intv$,
\[ \frac{M_{T,\delta}^\tau(h)}{I_{T,\delta}^\tau(h)^{1/2}}\overset{\mathrm{d}}{\longrightarrow} N(0,1)\quad \text{ and } \quad \frac{M_{T,\delta}^\tau(h)}{\E[I_{T,\delta}^\tau(h)]^{1/2}}\overset{\mathrm{d}}{\longrightarrow} N(0,1) \quad \text{ as } \delta\to 0,\]
provided that
\begin{equation}\label{eq:pconv}
	\frac{I_{T,\delta}^\tau(h)}{\E[I_{T,\delta}^\tau(h)]}\overset{\PP}{\longrightarrow} 1, \quad \text{ as }\delta \to 0.
\end{equation}
Applying the moment bounds from Lemma \ref{lem:basic_est}, we employ the analogue of the Kolmogorov--Chentsov criterion in Skorokhod topology to prove tightness of the suitably scaled processes.
Based on these observations, we obtain, as a first ingredient, the following functional limit theorem for the martingale part.

\begin{proposition} \label{prop:martclt}
	Assume that $\eta = o(\delta)$ and $\delta^{3/2} = o(\eta)$.
	Then, for a two-sided Brownian motion $(B^\leftrightarrow(h))_{h \in \R}$ and $v_\delta= \delta^3\eta^{-2}$, we have
	\[ (\delta^{3/2}v_\delta^{-1/2}M_{T,\delta}^\tau(h),\, h\in \R) \overset{\mathrm{d}}{\longrightarrow} \Big(\sqrt{T/(2\difflim)}\norm{K'}_{L^2} B^\leftrightarrow(h),\,h\in\R\Big), \quad \text{ as }\delta \to 0,\]
	in  the Skorokhod space  $\mathbb{D}(\R)$, which in view of the continuous limiting law is equivalent to uniform convergence on compacts.
\end{proposition}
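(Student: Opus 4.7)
The plan is to reduce to a process with exactly independent Gaussian summands via the coupling of Proposition \ref{prop:coupling}, prove a functional CLT for the coupled process by combining a variance computation with a Kolmogorov--Chentsov type tightness estimate, and finally transfer the statement to $M^\tau_{T,\delta}$ by controlling the coupling error uniformly in $h$ using the concentration bounds from Section \ref{sec:conc}.

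\textbf{Step 1: Variance calibration.} For $h_1<h_2$ in $\intv$, summing over the $\approx (h_2-h_1)v_\delta/\delta$ indices $i$ underlying $M^\tau_{T,\delta}(h_2)-M^\tau_{T,\delta}(h_1)$ and using $\E[I_{\delta,i}]=\frac{T}{2\theta_{\delta,i}(\ko)}\lVert K'\rVert_{L^2}^2\delta^{-2}+\mathcal{O}(1)$ from Lemma \ref{lem:basic_est}, together with $\theta_\pm(\delta)\to\difflim$, one obtains
\[\delta^3 v_\delta^{-1}\,\E\bigl[I^\tau_{T,\delta}(h_2)-I^\tau_{T,\delta}(h_1)\bigr]\longrightarrow \frac{T\lVert K'\rVert_{L^2}^2}{2\difflim}(h_2-h_1),\]
which matches exactly the Brownian increment variance of the claimed limit.

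\textbf{Step 2: Finite-dimensional CLT for the coupled process and tightness.} Let $\overbar M^\tau_{T,\delta}(h)$ be defined as in \eqref{eq:Mtau} but with $\overbar M_{\delta,i}$ from \eqref{eq:coupling} in place of $M_{\delta,i}$. By Proposition \ref{prop:coupling}, $\overbar M^\tau_{T,\delta}$ is exactly centred Gaussian with deterministic variance $\overbar I^\tau_{T,\delta}(h)=\sum_i \overbar I_{\delta,i}$ and has \emph{independent} increments over the disjoint index ranges produced by varying $h$. Step 1 then directly yields finite-dimensional convergence of $\delta^{3/2}v_\delta^{-1/2}\overbar M^\tau_{T,\delta}$ to $\sqrt{T/(2\difflim)}\lVert K'\rVert_{L^2}B^\leftrightarrow$. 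For tightness, Gaussianity supplies $\E\bigl\lvert\delta^{3/2}v_\delta^{-1/2}(\overbar M^\tau_{T,\delta}(h_2)-\overbar M^\tau_{T,\delta}(h_1))\bigr\rvert^{4}\lesssim (h_2-h_1)^{2}$ whenever $h_2-h_1\gtrsim \delta/v_\delta$, so the standard Kolmogorov--Chentsov criterion for Skorokhod space (e.g.\ Theorem 13.5 in Billingsley's \emph{Convergence of Probability Measures}) applies on each compact. The individual jump size $\delta^{3/2}v_\delta^{-1/2}\lvert \overbar M_{\delta,i}\rvert$ is of order $\eta/\delta=o_\PP(1)$ under the assumption $\eta=o(\delta)$, so the limit falls in $C(\R)$ and Skorokhod convergence upgrades to uniform convergence on compacts.

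\textbf{Step 3: Coupling error.} Writing $\alpha=\mathbf 1_{J(h)}$ for the index set $J(h)$ defining $M^\tau_{T,\delta}(h)$, Proposition \ref{prop:coupling} yields, on the event $\{\sum_{i\in J(h)}\lvert I_{\delta,i}-\overbar I_{\delta,i}\rvert\leq L\}$, sub-Gaussian tails of variance parameter $L$ for $M^\tau_{T,\delta}(h)-\overbar M^\tau_{T,\delta}(h)$. Proposition \ref{prop:conc} with the same $\alpha$ together with Step 1 and Lemma \ref{lem:basic_est}\ref{lem:basic_est2} gives $\sum_{i\in J(h)}\lvert I_{\delta,i}-\overbar I_{\delta,i}\rvert=\mathcal{O}_\PP(\delta^{-3/2}\sqrt{\lvert h\rvert v_\delta/\delta})$, so that after multiplication by $\delta^{3/2}v_\delta^{-1/2}$ the coupling error is $o_\PP(1)$ pointwise. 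Combining this with the tightness from Step 2 promotes the statement to uniform smallness on compacts, and concludes the proof.

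\textbf{Main obstacle.} The most delicate part is the uniformisation over $h$ in the growing window $[-L,L]$: the exponential tails from Propositions \ref{prop:conc} and \ref{prop:coupling} must be combined with a dyadic chaining (peeling) argument over nested sub-intervals of $\intv$, so that the resulting union bound still vanishes under the sharp $\delta^{3/2}v_\delta^{-1/2}$ scaling, rather than merely under the $\delta^{3/2}\sqrt{\log\delta^{-1}}$ rate appearing in Lemma \ref{lem:aux0}.
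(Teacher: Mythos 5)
Your route is genuinely different from the paper's. The paper exploits that for fixed $h$, the process $T\mapsto M^\tau_{T,\delta}(h)$ is a continuous martingale with quadratic variation $I^\tau_{T,\delta}(h)$; the key step is \eqref{eq:pconv}, which follows from the mean/variance estimates of Lemma~\ref{lem:basic_est}, so that a classical martingale CLT (Liptser--Shiryaev Theorem 5.5.4) yields convergence at fixed $h$, Cram\'er--Wold gives finite-dimensional convergence, and tightness comes from the moment criterion of Billingsley via Burkholder--Davis--Gundy together with the same variance bounds. You instead pass to the exactly Gaussian coupled process $\overbar M^\tau_{T,\delta}$ from \eqref{eq:coupling}, where both finite-dimensional convergence and Kolmogorov--Chentsov tightness are immediate from the explicit Gaussian structure, and then transfer back by controlling the coupling error. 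Your approach avoids invoking a martingale CLT, at the cost of a separate coupling-error step; the paper's is more streamlined but both are viable.

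Two concrete gaps. First, in Step~3 you cite Proposition~\ref{prop:conc} to bound $\sum_{i\in J(h)}\lvert I_{\delta,i}-\overbar I_{\delta,i}\rvert$. That proposition controls $\lvert\sum_i\alpha_i(I_{\delta,i}-\E[I_{\delta,i}])\rvert$, not $\sum_i\alpha_i\lvert I_{\delta,i}-\E[I_{\delta,i}]\rvert$, so it does not apply; what you need is the Cauchy--Schwarz/Markov device used in the paper's proof of Proposition~\ref{prop:tight},
\[
\PP\Big(\sum_{i\in J(h)}\lvert I_{\delta,i}-\overbar I_{\delta,i}\rvert>L\Big)\le\PP\Big(\lvert J(h)\rvert\sum_{i\in J(h)}(I_{\delta,i}-\E[I_{\delta,i}])^2>L^2\Big)\lesssim\frac{\lvert J(h)\rvert^2\delta^{-2}}{L^2},
\]
which gives $\sum_{i\in J(h)}\lvert I_{\delta,i}-\overbar I_{\delta,i}\rvert=\mathcal{O}_\PP(\lvert J(h)\rvert\delta^{-1})=\mathcal{O}_\PP(\lvert h\rvert v_\delta\delta^{-2})$, not the tighter bound you wrote; this is still good enough. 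Second, Step~3 establishes the coupling error is $o_\PP(1)$ only pointwise in $h$, but to transfer both finite-dimensional convergence \emph{and} tightness from $\overbar M^\tau_{T,\delta}$ to $M^\tau_{T,\delta}$ you need $\sup_{\lvert h\rvert\le m}\delta^{3/2}v_\delta^{-1/2}\lvert M^\tau_{T,\delta}(h)-\overbar M^\tau_{T,\delta}(h)\rvert=o_\PP(1)$ for each $m$; tightness of $\overbar M^\tau$ alone does not promote the pointwise statement to a uniform one. You flag this under ``Main obstacle,'' but the resolution is simpler than dyadic chaining: a union bound over the $\mathcal{O}(mv_\delta/\delta)$ jump locations of the piecewise-constant process in $[-m,m]$, combined with the conditional sub-Gaussian tail from Proposition~\ref{prop:coupling} and the variation bound above, yields a uniform bound of order $\delta^{3/2}v_\delta^{-1/2}\sqrt{mv_\delta\delta^{-2}\log(v_\delta/\delta)}\asymp\sqrt{m}\,\delta^{1/2}\sqrt{\log(\delta/\eta)}$, which is $o(1)$ since $\eta\gtrsim\delta^{3/2}$ implies $\log(\delta/\eta)\lesssim\log\delta^{-1}$. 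Both slips are reparable, but as written the proof has these two holes.
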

\begin{proof}
	From Lemma \ref{lem:basic_est}, we know that, for fixed $h >0$ and $\delta$ small enough,
	\[\Var(I_{T,\delta}^\tau(h))\lesssim \delta^{-2}\abs{\ceil{(\tau+hv_\delta)/\delta}-\ceil{\tau/\delta}}, \]
	while $\E[I_{T,\delta}^\tau(h)]^2\thicksim (\delta^{-2}(\ceil{(\tau+hv_\delta)/\delta}-\ceil{\tau/\delta}))^2$ is of much larger order since $v_\delta/\delta =\delta^2\eta^{-2}\to \infty$. This implies for all $h \neq 0$ (note that $h=0$ will be trivial in the sequel) that \eqref{eq:pconv} holds true.
	For $\delta$ small enough and fixed $h \neq 0$, we have the representation
	\[\E[\delta^3v_\delta^{-1} I_{T,\delta}^\tau(h)] = \frac T2 \babs{\sum_{i=\ceil{\tau/\delta}}^{\ceil{(\tau+hv_\delta)/\delta}}\frac{\delta}{v_\delta} \frac{\lVert K^\prime \rVert^2_{L^2}}{\theta(x_i)} \one_{\{i \neq \ceil{\tau/\delta}}\}} + o(1),\]
	which, as $\delta \to 0$, converges to $T/(2\difflim) \abs{h}\norm{K'}^2_{L^2}$.
	By the martingale CLT stated as Theorem 5.5.4 in \cite{liptser89}, we therefore obtain
	\[ \delta^{3/2}v_\delta^{-1/2} M_{T,\delta}^\tau(h)\overset{\mathrm{d}}{\longrightarrow} N\Big(0,\frac{T}{2\difflim}\norm{K'}^2_{L^2}\abs{h}\Big).
	\]
	For $-\infty<h_0<h_1<\cdots<h_J<\infty$ and $\beta_j\in\R$, all fixed, it follows similarly
	\[ \E\Big[\delta^3v_\delta^{-1}\sum_{j=1}^J\beta_j(I_{T,\delta}^\tau(h_j)-I_{T,\delta}^\tau(h_{j-1})) \Big]
	\longrightarrow \frac{T}{2\difflim} \norm{K'}^2_{L^2}  \sum_{j=1}^J\beta_j (h_j-h_{j-1}), \quad \text{ as } \delta\to0,
	\]
	as well as
	\begin{align*} \Var\Big(\delta^3v_\delta^{-1}\sum_{j=1}^J\beta_j(I_{T,\delta}^\tau(h_j)-I_{T,\delta}^\tau(h_{j-1})) \Big)&\lesssim \delta^4v_\delta^{-2}\sum_{j=1}^J\beta_j^2(h_jv_\delta/\delta-h_{j-1}v_\delta/\delta)\\
		&\lesssim \delta^3v_\delta^{-2} =\delta^{-3}\eta^4,
	\end{align*}
	which tends to zero.
	By the Cram\'er--Wold device, we thus deduce the convergence of the finite-dimensional distributions to a Gaussian process:
	\[ (\delta^{3/2}v_\delta^{-1/2}M_{T,\delta}^\tau(h),\, h\in \R)\overset{\mathrm{f.d.}}{\longrightarrow} \Big(\sqrt{T/(2\difflim)}\norm{K'}_{L^2}  B^{\leftrightarrow}(h),\,h\in\R\Big), \quad \text{ as } \delta\to 0,\]
	where $B^{\leftrightarrow}$ denotes a two-sided Brownian motion.
	
	The limiting process has continuous trajectories.
	Consequently, it suffices to prove distributional convergence in the Skorokhod topology, that is weak convergence in $\mathbb{D}([-m,m])$ for any $m > 0$, cf.\ \cite[Theorem 16.7]{bill99}. Given the convergence of the finite-dimensional distributions, it remains to prove tightness of the law of \begin{equation}\label{eq:Mrestr}
		(\delta^{3/2}v_\delta^{-1/2}M^\tau_{T,\delta}(h),\, h \in [-m,m])
	\end{equation}
	in $\mathbb{D}([-m,m])$ for any $m > 0$.
	The standard criterion \cite[Theorem 13.5]{bill99} tells us that tightness will follow  from verifying that, for $-m \le x\le y\le z\le m$,
	\begin{equation}\label{eq:momcrit}
		\E[\delta^6v_\delta^{-2}(M_{T,\delta}^\tau(z)-M_{T,\delta}^\tau(y))^2(M_{T,\delta}^\tau(y)-M_{T,\delta}^\tau(x))^2]\lesssim (z-x)^2.
	\end{equation}
	Note first that, for $z-x<\delta v_\delta^{-1}$, the left-hand side is zero because one of the factors in the argument of the expectation vanishes by the piecewise constant nature of $M_{T,\delta}$.
	Generally, the Cauchy--Schwarz inequality yields for the term on the left-hand side the upper bound
	\[\E[\delta^6v_\delta^{-2}(M_{T,\delta}^\tau(z)-M_{T,\delta}^\tau(y))^4]^{1/2}
	\E[\delta^6v_\delta^{-2}(M_{T,\delta}^\tau(y)-M_{T,\delta}^\tau(x))^4]^{1/2}.
	\]
	By the Burkholder--Davis--Gundy inequality and the bounds from Lemma \ref{lem:basic_est}, we obtain for any $\delta$ which is small enough to ensure
	$[-m,m]\subset\intv$ that
	\begin{align*}
		&\E[(M_{T,\delta}^\tau(y)-M_{T,\delta}^\tau(x))^4]\\ 
		&\quad\lesssim \E[(I_{T,\delta}^\tau(y)-I_{T,\delta}^\tau(x))^2]= \E\Big[\Big(\sum_{i=\ceil{(\tau+xv_\delta)/\delta}+1}^{\ceil{(\tau+yv_\delta)/\delta}}I_{\delta,i} \one_{\{i \neq \ceil{\tau/\delta}\}} \Big)^2\Big]\\
		&\quad= \Big(\sum_{i=\ceil{(\tau+xv_\delta)/\delta}+1}^{\ceil{(\tau+yv_\delta)/\delta}}\E[I_{\delta,i}] \one_{\{i \neq \ceil{\tau/\delta}\}}\Big)^2+ \Var\Big(\sum_{i=\ceil{(\tau+xv_\delta)/\delta}+1}^{\ceil{(\tau+yv_\delta)/\delta}}I_{\delta,i} \one_{\{i \neq \ceil{\tau/\delta}\}}\Big)\\
		&\quad\lesssim \big(\delta^{-2}(\ceil{(\tau+yv_\delta)/\delta}-\ceil{(\tau+xv_\delta)/\delta})\big)^2+\delta^{-2}(\ceil{(\tau+yv_\delta)/\delta}-\ceil{(\tau+xv_\delta)/\delta})\\
		&\quad\thicksim \delta^{-4}(\ceil{(\tau+yv_\delta)/\delta}-\ceil{(\tau+xv_\delta)/\delta})^2.
	\end{align*}
	Applying this to both factors and using $x\le y\le z$, we deduce
	\begin{align*}
		&\E[\delta^6v_\delta^{-2}(M_{T,\delta}^\tau(z)-M_{T,\delta}^\tau(y))^2(M_{T,\delta}^\tau(y)-M_{T,\delta}^\tau(x))^2]\\
		&\lesssim
		\big(\ceil{(\tau+zv_\delta)/\delta}\delta v_\delta^{-1}-\ceil{(\tau+xv_\delta)/\delta}\delta v_\delta^{-1}\big)^2.
	\end{align*}
	For $z-x\ge\delta v_\delta^{-1}$, we have
	\[\ceil{(\tau+zv_\delta)/\delta}-\ceil{(\tau+xv_\delta)/\delta}\le (z-x)v_\delta/\delta+1\le 2(z-x)v_\delta/\delta.\]
	This establishes the moment criterion \eqref{eq:momcrit} and hence tightness of the law of the restricted process \eqref{eq:Mrestr} for any $m > 0$, as desired.
\end{proof}

Let $Z_k$ be the random variables from the statement of Lemma \ref{lem:cp}.
Since
\[
\hat{\tau} = \hat{k}\delta = \Big(\argmaxalt_{k=1,\ldots,\delta^{-1}}Z_k\Big) \delta,
\]
we obtain from \eqref{eq:Z_delta} that
\begin{equation}
	\begin{split} \label{eq:min}
		\mathcal{Z}_\delta(v_\delta^{-1}(\hat{\tau} - \tau)) &= \eta M^{\tau}_{T,\delta}(v_\delta^{-1}(\hat{\tau} - \tau)) + \frac{\eta^2}{2} I^{\tau}_{T,\delta}(v_\delta^{-1}(\hat{\tau} - \tau)) = - Z_{{\hat{k}}} + \mathcal{O}_{\PP}(\eta^2\delta^{-2})\\
		&= \min_{h \in \R} \mathcal{Z}_{\delta}(h) + o_{\PP}(1).
	\end{split}
\end{equation}
As a second main ingredient to the proof of Theorem \ref{theo:clt}, we must verify the tightness criterion stated in \cite[Theorem 3.2.2]{vdv96} for the change point estimator, which, in view of the above representation, boils down to studying the sequence
\begin{equation}\label{eq:tighttau}
	(v_{\delta}^{-1}(\hat{\tau} - \tau), \, \delta \in 1/\N)
\end{equation}
For this purpose, the coupling technique for the martingale terms presented in Section \ref{sec:conc} plays a central role.

\begin{proposition}\label{prop:tight}
	Suppose that $\eta = o(\delta)$ and $\delta^{3/2} = o(\eta)$. 
	Then, the sequence \eqref{eq:tighttau} is tight.
\end{proposition}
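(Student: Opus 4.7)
The plan is to verify the classical tightness criterion of \cite[Theorem 3.2.2]{vdv96}, namely that for every $\varepsilon>0$ there exists $A>0$ with $\PP(|v_\delta^{-1}(\hat\tau-\tau)|>A)<\varepsilon$ for all sufficiently small $\delta$. Writing $\hat h_\delta\coloneqq v_\delta^{-1}(\hat\tau-\tau)$ and noting that $\mathcal{Z}_\delta(0)=0$ by construction, the near-minimization identity \eqref{eq:min} forces $\mathcal{Z}_\delta(\hat h_\delta)\le o_\PP(1)$. I would reduce the task to bounding the dyadic shell probabilities $\PP(\inf_{h\in S_j}\mathcal{Z}_\delta(h)\le 1)$ on $S_j\coloneqq\{h\in\R:2^{j-1}\le|h|<2^j\}$ for large $j$, and summing the resulting estimates over $j$ with $2^j\ge A$.

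For the deterministic drift, Lemma \ref{lem:basic_est} combined with the scaling $v_\delta=\delta^3/\eta^2$ produces a constant $c_1>0$ independent of $\delta$ such that $(\eta^2/2)\E[I^\tau_{T,\delta}(h)]\ge c_1|h|$ for all $|h|\ge c_0$. Since $h\mapsto I^\tau_{T,\delta}(h)$ is monotone in $|h|$ (every index adds a non-negative increment), its infimum on $S_j$ is attained at the endpoints $\pm 2^{j-1}$; applying the Bernstein-type bound of Proposition \ref{prop:conc} at these two points with the natural indicator weight vector $\alpha$ shows that on an event of probability at least $1-4\exp(-c\cdot 2^j)$ one has $(\eta^2/2) I^\tau_{T,\delta}(h)\ge c_12^{j-1}/2$ uniformly on $S_j$. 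For the stochastic term, Proposition \ref{prop:weaksol} yields independence of the driving Brownian motions $(B_{\delta,i})_i$, so for $h>0$ the partial sums $\sum_{i=\ko+1}^{\ko+J}M_{\delta,i}$ form a discrete-time martingale in $J$, and symmetrically for $h<0$. Doob's $L^2$-inequality together with the mean bound in Lemma \ref{lem:basic_est} then gives $\E\bigl[\sup_{h\in S_j}(\eta M^\tau_{T,\delta}(h))^2\bigr]\lesssim 2^j$.

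Combining the two estimates by Markov's inequality yields $\PP(\inf_{S_j}\mathcal{Z}_\delta\le 1)\lesssim 2^{-j}+4\exp(-c\cdot 2^j)$ for $j$ sufficiently large, and summing over $j$ with $2^j\ge A$ produces $\PP(|\hat h_\delta|>A)\lesssim A^{-1}$, which establishes tightness. The main obstacle is to obtain the shell bounds uniformly over all $h\in S_j$ rather than pointwise: this is resolved by exploiting the monotonicity of $I^\tau_{T,\delta}$ in $|h|$ so that its Bernstein control only needs to be checked at the shell endpoints, and by exploiting independence of the $B_{\delta,i}$ to recast $M^\tau_{T,\delta}$ as a discrete-time martingale in its summation index, enabling a single application of Doob's maximal inequality per shell. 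The $\mathcal{O}_\PP(\eta^2/\delta^2)$ remainder absorbed into \eqref{eq:Z_delta} is harmless because $\eta^2/\delta^2=o(1)$ in the vanishing-jump regime assumed here.
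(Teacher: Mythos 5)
Your dyadic peeling scheme is a legitimately different route from the paper, which instead applies Doob's maximal martingale inequality once to an \emph{exponential} martingale built from the coupled variables $\overbar{M}_{\delta,i}$ of \eqref{eq:coupling}, and controls the coupling error by the martingale Bernstein bound of Proposition~\ref{prop:coupling}, with a delicate case distinction between $\eta=o(\delta/\sqrt{\log\delta^{-1}})$ and $\eta\gtrsim\delta/\sqrt{\log\delta^{-1}}$ that additionally invokes Girsanov's theorem, a Novikov condition, and the exponential concentration of Proposition~\ref{prop:conc}. Your drift-side argument (monotonicity of $|h|\mapsto I^\tau_{T,\delta}(h)$, so that the Bernstein bound from Proposition~\ref{prop:conc} at the two shell endpoints controls the whole shell) is sound and is a nice simplification.

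The gap is in the stochastic term. You assert that because the $(B_{\delta,i})_i$ are independent Brownian motions, the partial sums $\sum_{i=\ko+1}^{\ko+J}M_{\delta,i}$ form a discrete-time martingale in $J$, enabling Doob's $L^2$-inequality. That inference does not hold, and the paper explicitly flags this very point in Section~\ref{sec:conc}: although the $B_{\delta,i}$ are independent, the stochastic integrals $M_{\delta,i}=\int_0^T X^{\Delta}_{\delta,i}\,\mathrm dB_{\delta,i}$ are \emph{not} independent, because each integrand $X^{\Delta}_{\delta,i}$ is a functional of the full cylindrical noise $W$ and hence depends on every $B_{\delta,j}$. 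The $M_{\delta,i}$ are pairwise \emph{orthogonal} (zero quadratic covariation, hence zero covariance), but orthogonality is strictly weaker than the martingale-difference property $\E[M_{\delta,\ko+J+1}\mid M_{\delta,\ko+1},\dots,M_{\delta,\ko+J}]=0$ that Doob requires. In fact, the entire purpose of Proposition~\ref{prop:coupling} is to manufacture a genuinely independent Gaussian surrogate $\overbar{M}_{\delta,i}$ via the Dambis--Dubins--Schwarz/Knight construction, precisely so that Doob's inequality becomes applicable; the paper's proof of Lemma~\ref{lem:aux0} uses the exact same device for the same reason. Replacing $M_{\delta,i}$ by $\overbar{M}_{\delta,i}$ in your shell argument would also force you to bound the coupling error $\sum_{i}(M_{\delta,i}-\overbar{M}_{\delta,i})$ uniformly over each shell; this is not a cosmetic fix, since the coupling error is controlled through $\sum_i\lvert I_{\delta,i}-\overbar I_{\delta,i}\rvert$, whose size relative to $\eta^{-1}$ is what forces the paper's case distinction and the Girsanov/Novikov machinery in case~(b). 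As written, the Doob step is unjustified and the proof does not go through.
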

\begin{proof}
	See Section \ref{app:proof_clt}.
\end{proof}

We are  finally ready to prove Theorem \ref{theo:clt}.
\begin{proof}[Proof of Theorem \ref{theo:clt}]
	Note first that, for any $m > 0$ and $\delta > 0$ small enough to ensure that $[-m,m] \subset \intv$, an application of Proposition \ref{prop:conc} and a union bound using
	\[
	\lvert\{\ceil{\tau + hv_\delta/\delta}: h \in [-m,m]\}\rvert \leq (2m+1)v_{\delta}/\delta
	\]
	show that
	\begin{align*}
	&	\PP\Big(\max_{h \in [-m,m]} \lvert I^\tau_{T,\delta}(h) - \E[I^\tau_{T,\delta}(h)]\rvert  \geq z \Big) \\
	&\quad\leq (2m+1)\frac{v_{\delta}}{\delta} \exp\Big(-\frac{\lowell^2}{2} \frac{z^2}{2z + (2m+1)v_\delta\delta^{-3}T\lowell^{-1}\lVert K^\prime \rVert^2_{L^2}} \Big),
	\end{align*}
	implying that
	\[\max_{h \in [-m,m]} \lvert I^\tau_{T,\delta}(h) - \E[I^\tau_{T,\delta}(h)]\rvert = \mathcal{O}_{\PP}(\delta^{-3/2}(v_{\delta}\log(v_\delta/\delta))^{1/2}) = \mathcal{O}_{\PP}(\eta^{-1}(\log(\delta/\eta))^{1/2}). \]
	Thus, for $\mathcal Z_\delta$ defined in \eqref{eq:Z_delta},
	\begin{align*}
		\big(\mathcal{Z}_{\delta}(h),\, h \in \R \big) &= \big(\eta M^{\tau}_{T,\delta}(h) + \frac{\eta^2}{2} \E[I^{\tau}_{T,\delta}(h)], \, h \in \R \big) + \mathcal{O}_{\PP}(\eta\log(\delta/\eta)^{1/2} )\\
		&= \big(\eta M^{\tau}_{T,\delta}(h) + \frac{\eta^2}{2} \E[I^{\tau}_{T,\delta}(h)], \, h \in \R \big) + o_{\PP}(1),
	\end{align*}
	where $\mathcal{O}_{\PP}$ and $o_{\PP}$ are with respect to uniform convergence on compacts.
	Using Lemma \ref{lem:basic_est}, one obtains
	\[ \eta^2 \E[I^\tau_{T,\delta}(h)] = \eta^2 \delta^{-3} v_\delta T\lvert h \rvert \frac{\lVert K^\prime \rVert^2_{L^2}}{2\difflim} +o(1) = T\lvert h \rvert \frac{\lVert K^\prime \rVert^2_{L^2}}{2\difflim} + o(1),\]
	where $o(\cdot)$ is with respect to uniform convergence on compacts. Moreover, Proposition \ref{prop:martclt} yields that, for some two-sided Brownian motion $\tilde{B}^{\leftrightarrow}$,
	\[(\eta M^{\tau}_{T,\delta}(h),\, h \in \R) \overset{\mathrm{d}}{\longrightarrow} \Big(\sqrt{\frac{T \norm{K'}_{L^2}^2}{2\difflim}}  \tilde{B}^{\leftrightarrow}(h), h \in \R\Big), \quad \text{ as }\delta \to 0,\]
	uniformly on compacts. Thus,  it follows
	\[(\mathcal{Z}_\delta(h),\, h \in \R) \overset{\mathrm{d}}{\longrightarrow} \Big(\sqrt{\frac{T\norm{K'}_{L^2}^2}{2\difflim}} \tilde{B}^{\leftrightarrow}(h) + \frac{T \lVert K^\prime \rVert^2_{L^2}}{4 \difflim}\lvert h \rvert,\, h \in \R\Big),\]
	uniformly on compacts.
	In addition, Proposition \ref{prop:tight} demonstrates that $(v_{\delta}^{-1}(\hat{\tau} - \tau), \delta \in 1/\N)$ is tight and, by \cite[Theorem 2.1]{williams74} (see also \cite[Remark 1.2]{bhatt76}), the limiting process of $\mathcal{Z}_{\delta}$ almost surely has a unique (random) minimiser $\hat{h} \in \R$.
	Therefore, taking into account \eqref{eq:min}, the argmin continuous mapping theorem (\cite[Theorem 3.2.2]{vdv96}) implies that
	\[v_{\delta}^{-1}(\hat{\tau} - \tau)  \overset{\mathrm{d}}{\longrightarrow} \argmin_{h \in \R} \Big\{\sqrt{\frac{T\norm{K'}_{L^2}^2}{2\difflim}}  \tilde{B}^{\leftrightarrow}(h) + \frac{T \lVert K^\prime \rVert^2_{L^2}}{4 \difflim}\lvert h \rvert \Big\}, \quad \text{ as }\delta \to 0.\]
	Substituting $\bar h=T\norm{K'}^2h/(2\theta_\ast)$, we arrive with a new two-sided Brownian motion $B^{\leftrightarrow}(\bar h)\coloneqq\sqrt{\bar h/h} \tilde{B}^{\leftrightarrow}(h)$ at the asserted limit result, i.e.,
	\[\delta^{-3}\eta^2\frac{T\norm{K'}^2}{2\theta_\ast } (\hat\tau-\tau) \overset{\mathrm{d}}{\longrightarrow} \argmin_{\bar h \in \R}\Big\{B^{\leftrightarrow}(\bar h)+\frac{1}{2} \abs{\bar h}\Big\}, \quad \text{ as }\delta \to 0.
	\]
\end{proof}

\section{Discussion}\label{sec:disc}
This paper presents a change-point model in a framework not previously considered, where the analysis relies on a  combination of a wide range of statistical principles and methods from operator and probability theory. 
\paragraph*{Consistency and rates of convergences}
The proof of consistency (see Theorem \ref{theo:cons} for the concrete statement) is based on ideas underlying the consistency theorem for M-estimators.
In the case of change point analysis based on independent observations (as carried out in Section 14.5.1 of \cite{kosorok08}), the verification of the required conditions can draw on well-known results concerning uniform convergence and on established concepts such as Glivenko--Cantelli or Donsker classes.
In contrast, in our framework, we first need to perform a careful study of the concentration properties of the components that appear in the (modified) $\log$-likelihood function determining our estimators.
In particular, it is crucial to understand the concentration behaviour of linear combinations of certain martingales $M_{\delta,i}$ and quadratic variation terms $I_{\delta,i}$, $i=1,\ldots,\delta^{-1}$, and in both cases the investigation requires new ideas:
\begin{enumerate}\vspace*{-.5em}
    \item[(a)] The individual martingale terms $M_{\delta,i}$ are not independent, which is why we use a coupling approach based on the Dambis--Dubins--Schwarz theorem to prove the deviation inequality stated in Proposition \ref{prop:coupling}.
    \item[(b)] The random variables $I_{\delta,i}$ are not independent either and, moreover, determined by nonlinear unbounded quadratic functionals, whose joint concentration properties are established by methods from Malliavin calculus.
This allows us to prove a Bernstein inequality (cf.\ Proposition \ref{prop:conc}), which is essential for verifying optimal convergence rates of our estimators.
\end{enumerate}\vspace*{-.5em}
Indeed, if the jump height $\eta(\delta) = \lvert \theta^0_+(\delta) - \theta^0_-(\delta) \rvert$ between the true parameters does not vanish in the limit $\delta \to 0$, we prove that our approach yields estimators that achieve the optimal error rates 
\[
	\lvert \hat{\theta}_\pm^\delta - \theta^0_{\pm}(\delta) \rvert = \mathcal{O}_{\PP}(\delta^{3/2}) \quad \text{and} \quad \lvert \hat{\tau}^\delta - \tau^0 \rvert = \mathcal{O}_{\PP}(\delta).
\]
For the change point estimator $\hat{\tau}^\delta$, this is the best precision we can hope for because our observations are at distance $x_i-x_{i-1}=\delta$ in space. The diffusivity parameter estimators $\hat{\theta}^\delta_{\pm}$ converge at rate $\delta^{3/2}$, which is the optimal rate for estimation of the constant diffusivity without change point based on multiple local measurements, as shown in \cite{altmeyer22}  in a general context. The introduction of the nuisance parameter $\theta_\circ$ in the estimation procedure is fundamentally important to achieve this optimal rate since it allows us to sufficiently reduce the bias originating from the constant approximation of the diffusivity on a change point interval in the modified log-likelihood.

\paragraph*{On the change point limit theorem}
A natural question is what size $\eta$ the jump must have, so that we can detect the change point at a given resolution level $\delta$. Our second main result, a limit theorem for the change point estimator for vanishing jump height $\eta$, provides an answer to this.
For ease of exposition, we assume here that the diffusivity parameters $\theta^0_\pm(\delta)$ are known. Our analysis  shows that $\hat\tau$ detects the change point consistently as soon as $\eta/\delta^{3/2}\to\infty$ and $\eta/\delta \to 0$, in which case 
\begin{equation}\label{eq:disc_limit}
	\frac{\eta^2}{\delta^3} \frac{T \lVert K^\prime \rVert^2_{L^2}}{2\difflim} (\hat{\tau}^\delta - \tau^0) \overset{\mathrm{d}}{\longrightarrow} \argmin_{h \in \mathbb{R}} \left\{ B^{\leftrightarrow}(h) + \frac{\lvert h \rvert}{2} \right\}.
\end{equation}
This enables us to construct approximate confidence intervals for the change point $\tau^0$ in situations when the signal is weak (i.e., $\eta$ is very small), which is typical in challenging applied scenarios that necessitate a statistical approach, such as detection of minor material contamination. For this purpose, it is particularly important to note that the limiting distribution which we identify in \eqref{eq:disc_limit} is known explicitly (cf.\ \cite[Lemma 1.6.3]{csorgo97}), allowing for the simple construction of asymptotic confidence intervals.  The condition $\eta = o(\delta)$ for our limit theorem is necessary since the discretisation error $\delta$ must be of lower order than the convergence rate $\eta^{-2}\delta^3$, that is $\delta/(\eta^{-2}\delta^3)\to 0$ or $\eta=o(\delta)$. For asymptotically larger or even non-vanishing jump height, the asymptotic distribution at error rate $\delta$ will degenerate because of the deterministic discretisation error, which is unknown to the statistician. 

\paragraph*{Perspectives}
The analysis performed for the basic model \eqref{eq:spde} can be extended to more general SPDE models. A notable example is the \emph{multivariate extension} investigated in the recent preprint \cite{tietrott24}, where the parametric change point estimation problem in the non-vanishing signal case is translated into the nonparametric task of estimating a \emph{change domain}. The multivariate analysis substantially draws on the concentration results of Section \ref{sec:conc}, which can be straightforwardly extended to the multivariate model since the spectral calculus techniques employed are dimension independent. 

The techniques developed in this paper also provide a foundation for more flexible models with tractable asymptotic behaviour by relaxing the assumptions on the diffusivity.
The change point estimation scheme described in Section \ref{sec:defbasic}, whose behaviour under rescaling can then be analysed in more detail, serves as a starting point for further exploration of these extensions.

In the vanishing jump height regime, we work under the simplifying assumption that the diffusivity constants $\theta_\pm$ are known. A natural extension is to investigate the asymptotic behaviour of an appropriately rescaled version of our simultaneous estimator $(\hat{\theta}_-,\hat{\theta}_+,\hat{\tau})$. \cite[p.37]{bhatt94} suggests that this approach works well in related one-dimensional iid settings, with weak convergence of rescaled simultaneous estimators yielding normal limits for $\theta_\pm$ and the normalised change point converging to the minimiser of a two-sided Brownian motion with drift. In the absence of a change point, asymptotic normality for the diffusivity parameter holds when rescaled at the convergence rate $\delta^{-3/2}$, as shown by \cite{altmeyer22}.
Finally, in practical applications, the continuous-time processes studied in this paper are, of course, observed on discrete grids. Regarding the implementation of our proposed procedure, we refer to the discussion of practical aspects in Section 5.3 in \cite{altmeyer22}.

\begin{appendix}
\section{Analytical results}\label{app:proof_ana}
Table \ref{table:notation} summarises important notation used throughout the paper. 
{\color{black}\begin{table}
	\begin{minipage}[!htbp]{0.45\textwidth}
	\begin{tabularx}{\textwidth}{@{}p{0.25\textwidth}X@{}}
	\toprule
	$B^\leftrightarrow$& two-sided Brownian motion\\
	$\delta$ & resolution level \\
	$\Delta_\vartheta$ & weighted Laplace operator\\
	$\eta$ & jump height\\
	$\theta$ & diffusivity\\
	$\theta_-,\theta_+$ & diffusivity constants\\ 
	$\underline\theta,\overline\theta$ & lower/upper bound on $\theta_\pm$\\
	$\theta_{\delta,i}(k),\theta_{\delta,i}^0$ & cf.~\eqref{eq:deftdi}\\
	$I_{\delta,i}$ & quadratic variation (cf.~\eqref{eq:MI})\\
	$k_\bullet$ & $=k_\bullet(\delta)=\lceil \tau/\delta\rceil$\\
	$[x]_\delta$ & $=\delta\lceil x/\delta\rceil$\\
	\bottomrule
	\end{tabularx}
	\end{minipage}\hfill
	\begin{minipage}[!htbp]{0.45\textwidth}	
	\begin{tabularx}{\textwidth}{@{}p{0.15\textwidth}X@{}}
	\toprule
	$K_{\delta,i}(\cdot)$ & localised kernel functions \\
	$\ell_{\delta,i}(\cdot)$& modified $\log$-likelihood (cf.~\eqref{eq:loglike})\\
	$M_{\delta,i}$ & martingale term (cf.~\eqref{eq:MI})\\
	$n$ & number of observation points \\
	$R_{\delta,\koo}(\cdot)$ & remainder (cf.~\eqref{eq:remain})\\
	$T$   & observation time \\
	$\tau$ & change point\\
	$\dot W$ & space-time white noise\\
	$X_{\delta,i}(t)$ & local observations of $X(t)$ wrt $K_{\delta,i}$\\
	$X_{\delta,i}^\Delta(t)$ & local obs.~of $X(t)$ wrt $\Delta K_{\delta,i}$\\
	\bottomrule
	\end{tabularx}
	\end{minipage}
	\vspace{5pt}
	\caption{Notation}
	\label{table:notation}
	\end{table}}
The rest of this section comprises analytical results that are essential for the subsequent statistical analysis.
	\begin{proof}[Proof of Proposition \ref{prop:weaksol}]
		Selfadjointness and the semigroup property imply
		\[\lVert S_\vartheta(t) \rVert_{\mathrm{HS}(L^2(\modelspace))}^2 = \int_{(0,1)^2} p^\vartheta_t(x,y)^2 \diff{x} \diff{y} = \int_{\modelspace} p^\vartheta_{2t}(x,x) \diff{x} \leq \frac{C_1(\vartheta)}{\sqrt{2 \pi t}}.\]
		Consequently,
		\[\int_0^T \lVert S_\vartheta(t) \rVert^2_{\mathrm{HS}(L^2(\modelspace))} \diff{t} < \infty,\]
		such that Theorem 5.4 in \cite{daprato14} yields that \eqref{eq:mild}
		determines the unique weak solution to \eqref{eq:spde}.
		Thus, by self-adjointness of $\Delta_\theta$, for any $z \in D(\Delta^\ast_\vartheta) = D(\Delta_\vartheta)$ and all $t \in [0,T]$, it holds that
		\begin{equation} \label{weak}
			\langle X(t), z \rangle = \langle X_0, z \rangle + \int_0^t \langle X(s), \Delta_\vartheta z \rangle \diff{s} + \langle W(t), z \rangle, \quad \PP\text{-a.s.}
		\end{equation}
		The normalised kernel function $K_{\delta,i}$ is twice continuously differentiable and supported on $[x_i - \delta/2, x_i + \delta/2]$.
		It thus follows for $i \in [n]$  with $\lvert x_i - \tau \rvert \geq \delta/2$ that $K_{\delta,i} \in D(\Delta_\theta)$ and $\Delta_\theta K_{\delta,i} = \theta_{\delta,i}(\ko) \Delta K_{\delta,i}$.
		Therefore, \eqref{eq:weaksol} yields
		\[ X_{\delta,i}(t)=
		\begin{cases}
			\langle X_0, K_{\delta,i} \rangle + \int_0^t \vartheta_-(\delta) X_{\delta,i}^\Delta(t) \diff{t}+ B_{\delta,i}(t),& \text{ if } x_i+\delta/2\le\tau,\\
			\langle X_0, K_{\delta,i} \rangle + \int_0^t \vartheta_+(\delta) X_{\delta,i}^\Delta(t)\diff{t}+  B_{\delta,i}(t),& \text{ if } x_i-\delta/2\ge\tau,
		\end{cases}\]
		where $B_{\delta,i}(\cdot) = \scapro{W(\cdot)}{K_{\delta,i}}$, $i= 1,\ldots,n$, are independent Brownian motions because of $\langle K_{\delta,i}, K_{\delta,j} \rangle = 0$ for $i \neq j$ and $\lVert K_{\delta,i} \rVert = \lVert K \rVert_{L^2} = 1$. Finally, the expression for $X_{\delta,\ko}$ follows from Lemma \ref{lem:mildweak} below.
	\end{proof}

	\begin{lemma} \label{lem:mildweak}
		Let $X$ be the mild solution process \eqref{eq:mild}.
		For any $z \in H_0^1(\modelspace)$, it $\PP$-a.s.\ holds
		\[\scapro{X(t)}{z} = \scapro{X(0)}{z} + \int_0^t \int_0^s \scapro{\Delta_\theta S_\theta(s-u)z}{\diff{W(u)}} \diff{s} +  \scapro{W(t)}{z}, \quad t \in [0,T]. \]
	\end{lemma}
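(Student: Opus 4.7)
The plan is to establish the identity first on the dense subspace $z \in D(\Delta_\theta)$ by combining the weak formulation \eqref{eq:weaksol} with the mild representation \eqref{eq:mild} via a stochastic Fubini argument, and then to extend the result to all of $H^1_0(\modelspace)$ by density.

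For $z \in D(\Delta_\theta)$, I would start from the weak formulation \eqref{eq:weaksol}, which reads $\langle X(t), z\rangle = \langle X_0, z\rangle + \int_0^t \langle X(s), \Delta_\theta z\rangle\,ds + \langle W(t), z\rangle$, and substitute the mild formula $X(s) = S_\theta(s) X_0 + \int_0^s S_\theta(s-u)\,dW_u$ into the Lebesgue integrand. Using self-adjointness of $S_\theta(s)$ and of $\Delta_\theta$ to transfer these operators onto $z$, the integrand decomposes as $\langle X(s), \Delta_\theta z\rangle = \langle X_0, \Delta_\theta S_\theta(s) z\rangle + \int_0^s \langle \Delta_\theta S_\theta(s-u) z, dW_u\rangle$. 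The deterministic part integrates via the semigroup identity $\int_0^t \Delta_\theta S_\theta(s) z\,ds = S_\theta(t) z - z$ to $\langle X_0, S_\theta(t) z - z\rangle$, while the stochastic part is transformed into the desired iterated Itô integral by stochastic Fubini. The prerequisite integrability bound $\int_0^t \int_0^s \lVert \Delta_\theta S_\theta(s-u) z\rVert^2 \,du\,ds \leq t^2 \lVert \Delta_\theta z\rVert^2$ is immediate from $L^2$-contractivity of $S_\theta$ on $D(\Delta_\theta)$. Combining both contributions with $\langle X_0, z\rangle$ collapses the deterministic pieces to $\langle S_\theta(t) X_0, z\rangle$, yielding the claimed identity on $D(\Delta_\theta)$.

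To pass from $D(\Delta_\theta)$ to $H^1_0(\modelspace)$, I would use that $D(\Delta_\theta)$ is dense in $H^1_0$ (e.g.\ by truncation of the spectral expansion \eqref{eq:transform}) to choose $z_n \in D(\Delta_\theta)$ with $z_n \to z$ in $H^1_0$. Convergence of the deterministic $L^2$ pairings and of $\langle W(t), z_n\rangle \to \langle W(t), z\rangle$ in $L^2(\PP)$ is standard. The only subtle step is $L^2$-convergence of the iterated stochastic integral along $z_n$: using Cauchy--Schwarz on the outer Lebesgue integral, the Itô isometry and the spectral calculus \eqref{eq:transform}, this reduces to a bound of the shape $\E[(\text{iterated integral in }z-z_n)^2] \lesssim t^2 \lVert z - z_n\rVert_{H^1_0}^2$, which follows from the spectral estimate $\int_0^t (t-v)\lambda_k^2 e^{-2\lambda_k v}\,dv \leq t\lambda_k/2$. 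This ensures the limiting identity holds $\PP$-almost surely for every $z \in H^1_0(\modelspace)$.

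The main technical obstacle is that $\Delta_\theta S_\theta(v)$ is unbounded as $v\downarrow 0$, so any naive uniform estimate on the integrand fails on $H^1_0$; the rescue is precisely the spectral bound above, which combines the smoothing factor $e^{-2\lambda_k v}$ against the Lebesgue weight $(t-v)$ to yield a linear (rather than quadratic) dependence on $\lambda_k$, matching the regularity $\sum_k \lambda_k \langle e_k, z\rangle^2 < \infty$ available for $z \in H^1_0$. Once this quantitative estimate is in place, the density step and the stochastic Fubini application are both routine.
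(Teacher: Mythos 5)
Your proof is correct, but it takes a genuinely different route from the paper's. The paper never invokes the weak formulation \eqref{eq:weaksol}: it starts directly from the iterated stochastic integral, verifies the stochastic Fubini hypothesis for every $z\in H_0^1$ at once via the spectral estimate $\int_0^t \lVert \Delta_\theta S_\theta(s)z\rVert^2\diff{s}\le \tfrac12\lVert(-\Delta_\theta)^{1/2}z\rVert^2$, and then swaps the order of integration to obtain $\int_0^t\scapro{(S_\theta(t-u)-I)z}{\diff W_u}$, which is identified with the mild representation. You instead substitute the mild formula into the weak formulation on the smaller domain $D(\Delta_\theta)$, where all steps are trivially justified (and, in fact, no stochastic Fubini is needed at that stage -- it is only the paper's reverse manoeuvre, swapping the outer Lebesgue with the inner It\^o integral, that requires it), and then extend by density. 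The density step is where you have to reproduce, in a mildly weighted form, exactly the same spectral bound that the paper applies upfront: your estimate $\int_0^t(t-v)\lambda_k^2\e^{-2\lambda_k v}\diff{v}\le t\lambda_k/2$ is precisely what makes the iterated integral $L^2(\PP)$-continuous in $z\in H_0^1$. So your approach is sound but buys nothing -- it replaces the paper's one-shot computation with a two-stage argument that needs the same core inequality plus additional density machinery and limit bookkeeping (convergence of $\scapro{W(t)}{z_n}$, uniformity in $t$, etc.). One reassuring cross-check: your deterministic bookkeeping correctly yields $\scapro{S_\theta(t)X_0}{z}$, not $\scapro{X_0}{z}$, which matches the paper's derivation as well; the $\scapro{X(0)}{z}$ in the lemma's display appears to be a typo (harmless in context, since $X_0\equiv 0$ is assumed from Section \ref{sec:est} onward, and the use of the lemma inside Proposition \ref{prop:weaksol} displays the corrected term $\scapro{S_\theta(t)X_0}{K_{\delta,i}}$).
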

	\begin{proof}
		Let us first show that $\int_0^s \scapro{\Delta_\theta S_\theta(s-u)z}{\diff{W(u)}}$ is well-defined. Since the eigenvalues $(\lambda_k)$ of $-\Delta_\vartheta$ are nonnegative, it follows from \eqref{eq:transform} that, for any $s > 0$ and $z \in L^2(\modelspace)$, we have $S_\theta(s)z \in D(-\Delta_\theta)$.
		Therefore, $\Delta_\theta S_{\theta}(s) z$ is well-defined, and we have
		\[\int_0^t \lVert \Delta_\theta S_\theta(s) z \rVert^2 \diff{s} = \int_0^t \sum_{k \in \N} \lambda_k^2 \mathrm{e}^{-2\lambda_k s} \scapro{e_k}{z}^2 \diff{s} \leq \frac{1}{2}\sum_{k \in \N} \lambda_k \scapro{e_k}{z}^2=\frac12\norm{(-\Delta_\theta)^{1/2}z}^2.\]
		Thus, $\int_0^t \lVert \Delta_\theta S_\theta(s) z \rVert^2 \diff{s}<\infty$ if $z \in D((-\Delta_\theta)^{1/2}) = D(\mathcal{E}_\theta) = H_0^1(\modelspace)$.
		Consequently, for any $z \in H^1_0(\modelspace)$, $\int_0^s \scapro{\Delta_\theta S_\theta(s-u)z}{\diff{W(u)}}$ is well-defined.
		Let now, for fixed $z \in H_0^1$,
		\[\Phi_{s,u}(x) = \langle \Delta_\theta S_{\theta}(s-u)z,x\rangle \one_{[0,s]}(u), \quad u,s \in [0,t], x \in L^2(\modelspace).\]
		Then, using Fubini's theorem,
		\begin{align*}
			\int_0^t \int_0^t \lVert \Phi_{s,u} \rVert^2_{\mathrm{HS}(L^2(\modelspace),\R)} \diff{u} \diff{s}  &= \int_0^t \int_0^s \norm{\Delta_\theta S_{\theta}(s-u)z}^2 \diff{u}\diff{s} 
			\leq 
			\frac{t}{2} \lVert (-\Delta_\theta)^{1/2} z \rVert^2 < \infty.
		\end{align*}
		This allows us to apply the stochastic Fubini theorem (\cite[Theorem 4.33]{daprato14}), and we obtain
		\begin{align*}
			\int_0^t \int_0^s \scapro{\Delta_\theta S_\theta(s-u)z}{\diff{W(u)}} \diff{s}  &= \int_0^t \Big(\int_0^t \Phi_{s,u} \diff{W(u)} \Big) \diff{s}\\
			&= \int_0^t \Big(\int_0^t \Phi_{s,u} \diff{s} \Big) \diff{W(u)}\\
			&= \int_0^t \Big\langle\int_{u}^t \Delta_\theta S_\theta(s-u)z \diff{s},\diff{W}(u)\Big\rangle\\
			&= \int_0^t \scapro{(S_\theta(t-u) - I)z}{\diff{W}_u}\\
			&= \langle X(t), z \rangle - \langle W(t), z \rangle - \langle S_\theta(t)X(0), z \rangle
		\end{align*}
		almost surely.
	\end{proof}

	\begin{lemma} \label{lem:divop}
		If $u \in C^2(\modelspace)$, we have $\Delta_\theta u = \theta \Delta u + \eta u^\prime(\tau) \delta_\tau$ in the sense of distributions, where $\delta_\tau$ is the $\delta$-distribution at $\tau$. 
	\end{lemma}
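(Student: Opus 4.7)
The plan is to verify the identity by testing against $\phi \in C_c^\infty((0,1))$. By the distributional definition of the divergence-form operator, and since $u \in C^2((0,1))$ and $\theta \in L^\infty((0,1))$, we have $\Delta_\theta u = (\theta u')'$ in $\mathcal{D}'((0,1))$, so
\[
\langle \Delta_\theta u, \phi\rangle = -\int_0^1 \theta(x) u'(x) \phi'(x)\,dx.
\]
Using the piecewise constant structure \eqref{eq:theta}, I would split the integral at $\tau$ as
\[
-\theta_- \int_0^\tau u'(x)\phi'(x)\,dx \; - \; \theta_+ \int_\tau^1 u'(x)\phi'(x)\,dx,
\]
and then perform ordinary integration by parts on each piece; this is justified because $u \in C^2$ on each closed subinterval and $\phi$ vanishes near $0$ and $1$.

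The boundary contributions at $0$ and $1$ vanish by the support of $\phi$, while the boundary terms at $\tau$ give
\[
-\theta_- u'(\tau)\phi(\tau) + \theta_+ u'(\tau)\phi(\tau) = \eta\, u'(\tau)\phi(\tau),
\]
with $\eta = \theta_+ - \theta_-$. The interior terms combine to
\[
\theta_-\int_0^\tau u''(x)\phi(x)\,dx + \theta_+\int_\tau^1 u''(x)\phi(x)\,dx = \int_0^1 \theta(x)\Delta u(x)\phi(x)\,dx,
\]
so altogether
\[
\langle \Delta_\theta u,\phi\rangle = \langle \theta\Delta u,\phi\rangle + \eta\, u'(\tau)\phi(\tau) = \langle \theta\Delta u + \eta u'(\tau)\delta_\tau,\phi\rangle,
\]
which is the asserted distributional identity.

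There is no real obstacle here; the argument is essentially a one-line integration by parts. The only point that needs minor care is the choice of definition of $\Delta_\theta u$ to start from. One could either take the distributional divergence $\nabla(\theta \nabla u)$ directly, or argue from the Dirichlet form identity $\mathcal{E}_\theta(u,v) = -\langle \Delta_\theta u,v\rangle$ applied to $v = \phi \in C_c^\infty \subset H_0^1$, which gives the same starting expression $-\int \theta u'\phi'$. Either way, the computation proceeds as above.
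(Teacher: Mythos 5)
Your proof is correct and follows essentially the same route as the paper: test against $\phi \in C_c^\infty((0,1))$, start from $-\int_0^1 \theta u' \phi'$, split at $\tau$, integrate by parts on each piece, and collect the boundary terms at $\tau$ to produce the $\eta u'(\tau)\delta_\tau$ contribution. The paper's version is just a more condensed rendering of the identical computation.
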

	\begin{proof}
		Let $z \in C_c^\infty(\modelspace)$. Integration by parts shows that
		\begin{align*}
			\langle \Delta_\theta u, z\rangle &= -\Big(\theta_-(\delta)\int_0^\tau u^\prime(x) z^\prime(x) \diff{x} + \theta_+(\delta)\int_\tau^1 u^\prime(x) z^\prime(x) \diff{x} \Big)\\
			&= \eta u^\prime(\tau)z(\tau) + \int_0^1 \theta(x) u^{\prime \prime}(x) z(x) \diff{x}\\
			&= \langle \theta \Delta u + \eta u^\prime(\tau) \delta_\tau, z\rangle.
		\end{align*}
	\end{proof}
	
	\begin{lemma}\label{lem:sol_c2}
		For $f \in C^2_0((0,1))$ and $x\in(0,1)$, we have
		\begin{equation}\label{eq:sol_c2}
			\begin{split}
				(-\Delta_\theta)^{-1}(f^{\prime \prime})(x) &= \frac{1}{\theta_-} \Big(f(x) - \frac{\theta_+ - \theta_-}{\tau \theta_+ + (1-\tau) \theta_-} f(\tau) x \Big) \one_{(0,\tau)}(x)\\
				&\quad+ \frac{1}{\theta_+} \Big(f(x) + \frac{\theta_+ - \theta_-}{\tau \theta_+ + (1-\tau) \theta_-} f(\tau) (1-x) \Big) \one_{[\tau,1)}(x).
			\end{split}
		\end{equation}
		In particular,
		\begin{equation}\label{eq:sol_c21}
			\lVert (-\Delta_{\theta})^{-1} \Delta K_{\delta,\lceil \tau/\delta \rceil } \rVert \lesssim \delta^{-1/2}.
		\end{equation}
	\end{lemma}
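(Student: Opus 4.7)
The plan is to verify \eqref{eq:sol_c2} by explicit piecewise construction and then to read off \eqref{eq:sol_c21} directly from the pointwise formula. Since $\theta$ is constant on each of $(0,\tau)$ and $(\tau,1)$, any candidate $u$ of $-\Delta_\theta u = f''$ must have the piecewise form $u(x) = -f(x)/\theta_\pm + a_\pm x + b_\pm$, because the equation reduces on each piece to $u'' = -f''/\theta_\pm$ in the classical sense. I would then fix the four unknowns by imposing four conditions: the Dirichlet values $u(0)=u(1)=0$, continuity of $u$ at $\tau$ (to ensure $u\in H^1_0$), and continuity of the flux $\theta u'$ at $\tau$ (so that the distributional derivative $(\theta u')' = \Delta_\theta u$ contains no Dirac contribution at $\tau$, as would otherwise arise via Lemma \ref{lem:divop}). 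Using $f(0)=f(1)=0$, the boundary values give $b_- = 0$ and $b_+ = -a_+$; the flux condition becomes $\theta_- a_- = \theta_+ a_+$; and continuity of $u$ at $\tau$ reduces to a single scalar equation whose denominator is precisely $D \coloneqq \tau\theta_+ + (1-\tau)\theta_-$. Solving produces $a_\pm$ proportional to $\eta f(\tau)/(\theta_\pm D)$, and substituting back regroups into \eqref{eq:sol_c2}. Uniqueness is automatic from invertibility of $-\Delta_\theta$ on $L^2$, and the constructed $u$ lies in $D(\Delta_\theta)$ by design, since $u\in H^1_0$ and $\Delta_\theta u = f''\in L^2$ with no spurious $\delta_\tau$ contribution.

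For \eqref{eq:sol_c21}, I specialise to $f = K_{\delta,\ko}$, which lies in $C^2_0((0,1))$ for small $\delta$ since $\supp(K_{\delta,\ko}) \subset [(\ko-1)\delta, \ko\delta] \subset (0,1)$. The direct term $f/\theta_\pm$ in \eqref{eq:sol_c2} has $L^2$-norm at most $\lowell^{-1}\lVert K\rVert_{L^2} = \lowell^{-1}$ by the kernel normalization. The affine correction has prefactor $\lvert\eta f(\tau)\rvert/(\theta_\pm D)$; since $\tau - x_\ko \in (-\delta/2,\delta/2]$, we have $\lvert K_{\delta,\ko}(\tau)\rvert = \delta^{-1/2}\lvert K(\delta^{-1}(\tau - x_\ko))\rvert \leq \delta^{-1/2}\lVert K\rVert_\infty$, so this prefactor is of order $\delta^{-1/2}$ under \eqref{eq:unif_ell}, while the piecewise-linear profile $x\one_{(0,\tau)} + (1-x)\one_{[\tau,1)}$ it multiplies has $L^2$-norm bounded uniformly in $\delta$. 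Adding the two contributions gives $\lVert(-\Delta_\theta)^{-1}\Delta K_{\delta,\ko}\rVert \lesssim \delta^{-1/2}$.

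The only subtlety, and it is a routine bookkeeping one, is solving the four interface/boundary equations cleanly and tracking the sign and denominator conventions to recover \eqref{eq:sol_c2} verbatim. I anticipate no conceptual obstacle: the $L^2$-rate $\delta^{-1/2}$ is driven entirely by the singular value $\lvert K_{\delta,\ko}(\tau)\rvert \sim \delta^{-1/2}$, which enters through the affine correction; the direct term $f/\theta_\pm$ contributes only $O(1)$ and is absorbed into the final bound.
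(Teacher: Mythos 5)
Your proposal is correct and takes essentially the same route as the paper: both solve $-\Delta_\theta u = f''$ piecewise using constancy of $\theta$ on $(0,\tau)$ and $[\tau,1)$, fix the free constants via the Dirichlet boundary values together with continuity of $u$ and of the flux $\theta u'$ at $\tau$, and then bound the $L^2$-norm for $f=K_{\delta,\ko}$ from $\lVert K_{\delta,\ko}\rVert_{L^2}=1$ and $\lvert K_{\delta,\ko}(\tau)\rvert\le\delta^{-1/2}\lVert K\rVert_\infty$. The paper merely streamlines the bookkeeping by integrating $\theta g'=f'+c$ with a single constant $c$ (flux continuity is then automatic and only $g(1)=0$ remains to fix $c$); and if you actually carry your four-constant calculation through you will notice the displayed formula is that of $\Delta_\theta^{-1}(f'')=-(-\Delta_\theta)^{-1}(f'')$ — a harmless sign typo in the statement that does not affect \eqref{eq:sol_c21}.
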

	\begin{proof}
		Let $g$ be the unique function in $D(\Delta_\theta)$ such that $\Delta_\theta g = f^{\prime \prime}$. Such $g$ must satisfy
		\[\theta g' = f^\prime + c,\]
		for some constant $c$, that is,
		\[\nabla g(x) = \begin{cases} \frac{1}{\theta_-}(f^\prime(x) + c), &x \in (0,\tau),\\ \frac{1}{\theta_+}(f^\prime(x) + c), & x \in [\tau,1). \end{cases} \]
		Setting $g(x) = \int_0^x g^\prime(y) \diff{y}$, the constant $c$ is identified via $\int_0^1 g^\prime(y) \diff{y} = 0$ due to the Dirichlet boundary conditions of the operator $\Delta_\theta$, and we obtain $c = \frac{\theta_- - \theta_+}{\tau \theta_+ + (1-\tau) \theta_-} f(\tau)$.
		Straightforward calculations then establish that $g = (-\Delta_\theta)^{-1}(f^{\prime\prime})$ is given by \eqref{eq:sol_c2}.
		For $f = K_{\delta,\ko}$, we have $\lvert f(\tau) \rvert \lesssim \delta^{-1/2}$ and $\lVert f\rVert = 1$, such that \eqref{eq:sol_c21} follows.
	\end{proof}
	
	\begin{lemma}\label{lem:sol_interpol}
		For $f \in C_0^2((0,1))$, we have
		\[\norm{(-\Delta_\theta)^{-3/4}\Delta f}\lesssim \lVert f \rVert^{1/2} \lVert f \rVert^{1/2}_{H^1((0,1))} + \lVert f \rVert_\infty,\]
		for a constant only depending on $\lowell,\upell$. In particular,
		with a constant only depending on $K$ and $\lowell,\upell$,
		\[ \norm{(-\Delta_\theta)^{-3/4}\Delta K_{\delta,\ko}} \lesssim \delta^{-1/2}.\]
	\end{lemma}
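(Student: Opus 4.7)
The plan is to reduce $\|(-\Delta_\theta)^{-3/4}\Delta f\|$ to a pair of ``neighbouring'' estimates via the interpolation inequality
\[
\|(-\Delta_\theta)^{1/4}u\|^2 \le \|u\|\cdot\|(-\Delta_\theta)^{1/2}u\|,
\]
which follows from the Cauchy--Schwarz inequality applied to the spectral decomposition of $-\Delta_\theta$. Setting $G := (-\Delta_\theta)^{-1}\Delta f$, so that $(-\Delta_\theta)^{-3/4}\Delta f = (-\Delta_\theta)^{1/4}G$, this gives
\[
\|(-\Delta_\theta)^{-3/4}\Delta f\|^2 \le \|G\|\cdot\|(-\Delta_\theta)^{1/2}G\|.
\]
The explicit representation of $G$ in Lemma~\ref{lem:sol_c2} provides the direct $L^2$ bound $\|G\|\lesssim \|f\|+|f(\tau)|\le\|f\|+\|f\|_\infty$. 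For the Dirichlet form, the proof of Lemma~\ref{lem:sol_c2} yields the identity $\theta G'=-f'+c$ with $c=\eta f(\tau)/D$. Inserting this and expanding the square in $\mathcal{E}_\theta(G,G)=\int_0^1 (f'-c)^2/\theta\,\diff x$, the cross-term $-2c\int f'/\theta$ partially cancels with $c^2\int 1/\theta$, resulting in
\[
\mathcal{E}_\theta(G,G)=\int_0^1 \frac{(f')^2}{\theta}\,\diff x-\frac{\eta^2 f(\tau)^2}{D\theta_-\theta_+}\le \underline\theta^{-1}\|f'\|^2.
\]

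These bounds together only yield the intermediate estimate $\|(-\Delta_\theta)^{-3/4}\Delta f\|^2 \lesssim (\|f\|+\|f\|_\infty)\|f'\|$, which contains a suboptimal cross-term $\|f\|_\infty\|f'\|$. To remove it, I would split $f = \tilde f + f(\tau)\varphi$, where $\varphi\in H^1_0((0,1))\cap D(\Delta_\theta)$ is a fixed reference function with $\varphi(\tau)=1$ (e.g.\ the piecewise linear $\theta$-harmonic function with appropriate transmission at $\tau$). Since $\tilde f(\tau)=0$, the quotient $\tilde f/\theta$ is continuous at $\tau$ and hence lies in $H^1_0$; a double integration by parts on each side of $\tau$ shows that the boundary contribution $f(\tau)[e_k']$ at $\tau$ vanishes, leaving the clean identity $\langle \Delta\tilde f,e_k\rangle=-\lambda_k\langle \tilde f/\theta,e_k\rangle$. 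Consequently,
\[
\|(-\Delta_\theta)^{-3/4}\Delta\tilde f\|^2 = \|(-\Delta_\theta)^{1/4}(\tilde f/\theta)\|^2\lesssim \|\tilde f/\theta\|\cdot\|(-\Delta_\theta)^{1/2}(\tilde f/\theta)\|\lesssim \|\tilde f\|\cdot\|\tilde f'\|,
\]
where in the last step I use $\|(-\Delta_\theta)^{1/2}(\tilde f/\theta)\|^2=\int \theta(\tilde f'/\theta)^2=\int (\tilde f')^2/\theta$. Inserting $\|\tilde f\|\lesssim \|f\|+\|f\|_\infty$, $\|\tilde f'\|\lesssim \|f\|_{H^1}+\|f\|_\infty$, and absorbing the resulting cross-terms via the 1D Sobolev embedding $\|f\|_\infty\lesssim \|f\|^{1/2}\|f\|_{H^1}^{1/2}$, produces the target bound up to the residual contribution $|f(\tau)|\,\|(-\Delta_\theta)^{-3/4}\Delta\varphi\|\lesssim \|f\|_\infty$, which is handled by a direct spectral computation for the fixed $\varphi$ (independent of $f$, with constant depending only on $\underline\theta,\overline\theta$).

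The ``in particular'' estimate for $f=K_{\delta,\ko}$ follows immediately by plugging in $\|K_{\delta,\ko}\|=1$, $\|K_{\delta,\ko}\|_{H^1}\asymp \delta^{-1}$, and $\|K_{\delta,\ko}\|_\infty\asymp \delta^{-1/2}$, both contributions being of order $\delta^{-1/2}$.

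The hard part is arranging the decomposition so that the $f(\tau)$-contribution is captured cleanly: a naive application of the interpolation inequality to $G$ alone loses a factor of $\delta^{-1/4}$ for the kernel, because $\|G\|\asymp \delta^{-1/2}$ is dominated by the ``linear correction'' in Lemma~\ref{lem:sol_c2} rather than the bulk $f/\theta$. The splitting $f=\tilde f+f(\tau)\varphi$ is essential precisely to route the divergent part of the sum (which appears as $\|(-\Delta_\theta)^{1/4}(f/\theta)\|^2=\infty$ in the naive spectral expansion, due to the jump of $f/\theta$ at $\tau$) into a fixed-size residual handled separately.
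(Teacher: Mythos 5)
Your approach is genuinely different from the paper's and the overall decomposition idea is viable, but as written the argument has two concrete gaps.

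First, the reference function you describe does not exist. A piecewise linear function $\varphi$ with $\varphi(0)=\varphi(1)=0$ and $\varphi(\tau)=1$ that also satisfies the $\Delta_\theta$-transmission condition $[\theta\varphi'](\tau)=0$ would force $\theta\varphi'\equiv c$ constant, hence $\varphi$ monotone, contradicting $\varphi(0)=\varphi(1)=0$ and $\varphi(\tau)=1$. You therefore have to drop either the piecewise-linear structure, the $\theta$-harmonicity, or $\varphi\in D(\Delta_\theta)$. The cleanest fix is the tent function $\varphi(x)=\tfrac{x}{\tau}\one_{(0,\tau)}+\tfrac{1-x}{1-\tau}\one_{[\tau,1)}$, for which $\Delta\varphi=[\varphi'](\tau)\delta_\tau$ and $\|(-\Delta_\theta)^{-3/4}\delta_\tau\|^2=\sum_k\lambda_k^{-3/2}e_k(\tau)^2<\infty$ uniformly in $\lowell\le\theta_\pm\le\upell$ by Sturm--Liouville eigenvalue asymptotics $\lambda_k\asymp k^2$ and uniform $L^\infty$-boundedness of the $e_k$; this then handles the residual contribution $\lvert f(\tau)\rvert\,\|(-\Delta_\theta)^{-3/4}\Delta\varphi\|\lesssim\|f\|_\infty$.

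Second, the boundary term at $\tau$ does not vanish as you claim. Performing the double integration by parts on each side of $\tau$ yields
\[
\langle\Delta\tilde f,e_k\rangle=-[\tilde f'](\tau)\,e_k(\tau)-\lambda_k\langle\tilde f/\theta,e_k\rangle.
\]
The term $\tilde f(\tau)[e_k'](\tau)$ indeed drops out since $\tilde f(\tau)=0$, but the other boundary term $-[\tilde f'](\tau)e_k(\tau)$ survives: while $f$ is $C^1$, $\tilde f'=f'-f(\tau)\varphi'$ inherits the jump of $\varphi'$, so $[\tilde f'](\tau)=-f(\tau)[\varphi'](\tau)\neq 0$ in general. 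The good news is that this extra contribution is proportional to $e_k(\tau)$, i.e.\ again a $\delta_\tau$-type term, and is bounded by $\lvert f(\tau)\rvert\,\lvert[\varphi'](\tau)\rvert\,\|(-\Delta_\theta)^{-3/4}\delta_\tau\|\lesssim\|f\|_\infty$. So your final estimate survives, but the justification should account for this term rather than assert its vanishing. For comparison, the paper sidesteps all of this: it uses $(-\Delta_\theta)^{1/2}\le\upell^{1/2}(-\Delta)^{1/2}$ to reduce to $\|(-\Delta_\theta)^{-1}\Delta f\|_{H^{1/2}((0,1))}$, splits the $H^{1/2}$-norm intrinsically on $(0,\tau)$ and $(\tau,1)$, and reads off the bound directly from the explicit formula in Lemma~\ref{lem:sol_c2}; no decomposition of $f$, no boundary terms, and no separate estimate for $\|(-\Delta_\theta)^{-3/4}\delta_\tau\|$ are required.
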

	\begin{proof}
		By $(-\Delta_\theta)^{1/2}\le \bar\theta^{1/2}(-\Delta)^{1/2}$, we get
		\begin{align*}
			\norm{(-\Delta_\theta)^{-3/4}\Delta f}_{L^2}^2 &=  \norm{(-\Delta_\theta)^{1/4}(-\Delta_\theta)^{-1}\Delta f}^2\\
			&\le \bar\theta^{1/2}\norm{(-\Delta)^{1/4}(-\Delta_\theta)^{-1}\Delta f}^2\\
			&\le \bar\theta^{1/2}\norm{(-\Delta_\theta)^{-1}\Delta f}_{H^{1/2}((0,1))}^2.
		\end{align*}
		The exact representation of Lemma \ref{lem:sol_c2} yields an $H^{1}$-function $(-\Delta_\theta)^{-1}\Delta f$ for $f \in C_0^2((0,1))$. By the inner description of the $H^{1/2}((0,1))$-norm in terms of first order differences, cf.\ \cite[Section 3.4.2]{triebel10}, we have
		\[ \forall g\in H^{1/2}((0,1)):\; \norm{g}_{H^{1/2}((0,1))}^2\thicksim \norm{g|_{(0,\tau)}}_{H^{1/2}((0,\tau))}^2+ \norm{g|_{(\tau,1)}}_{H^{1/2}((\tau,1))}^2.
		\]
		We obtain the straightforward bounds for the representation in Lemma \ref{lem:sol_c2},
		\begin{align*}
			&\norm{((-\Delta_\theta)^{-1}\Delta f)|_{(0,\tau)}}_{H^{1/2}((0,\tau))}\\
			&\le \theta_-^{-1}\norm{f|_{(0,\tau)}}_{H^{1/2}((0,\tau))}+\frac{\theta_+-\theta_-}{\tau\theta_++(1-\tau)\theta_-}
			\abs{f(\tau)}\norm{x|_{(0,\tau)}}_{H^{1/2}((0,\tau))}\\
			&\lesssim \norm{f|_{(0,\tau)}}_{H^{1/2}((0,\tau))}+\norm{f}_\infty
		\end{align*}
		as well as
		\begin{align*}
			&\norm{((-\Delta_\theta)^{-1}\Delta f|_{(\tau,1)}}_{H^{1/2}((\tau,1))}\\
			&\le \theta_+^{-1}\norm{f|_{(\tau,1)}}_{H^{1/2}((\tau,1))}+\frac{\theta_+-\theta_-}{\tau\theta_++(1-\tau)\theta_-}
			\abs{f(\tau)}\norm{(1-x)|_{(\tau,1)}}_{H^{1/2}((\tau,1))}\\
			&\lesssim \norm{f|_{(\tau,1)}}_{H^{1/2}((\tau,1))}+\norm{f}_\infty
		\end{align*}
		with uniform constants, depending on $\lowell$, $\upell$ only. We thus conclude
		\[ \norm{(-\Delta_\theta)^{-3/4}\Delta f} \lesssim \norm{f}_{H^{1/2}((0,1))}+\norm{f}_\infty \leq \norm{f}^{1/2}\norm{f}_{H^{1}((0,1))}^{1/2} + \norm{f}_\infty.
		\]
		In view of $\norm{K_{\delta,\ko}}_\infty=\delta^{-1/2}\norm{K}_\infty$ and $\norm{K_{\delta,\ko}}_{H^{1}((0,1))} = \delta^{-1} \norm{K}_{H^1}$, this yields
		\[ \norm{(-\Delta_\theta)^{-3/4}\Delta K_{\delta,\ko}} \lesssim \delta^{-1/2}.\]
	\end{proof}

	\section{Proofs for the concentration analysis}\label{app:proof_conc}
	\begin{proof}[Proof of Lemma \ref{lem:basic_est}]
		\begin{enumerate}[label = (\roman*), ref = (\roman*)]
			\item
			It follows from the variation of constants formula that
			\[ X_{\delta,i}^\Delta(t)=\int_0^t \scapro{S_\theta(t-s)\Delta K_{\delta,i}}{\diff{W(s)}}.
			\]
			Let $i,j \in [n]$ with $i,j \neq \ko$.
			Then, $\vartheta \equiv \vartheta_{\delta,i}(\ko)$ on $\mathrm{supp}(K_{\delta,i})$ and, using the selfadjointness of $(S_\theta(t))_{t \geq 0}$, we obtain for $s,t>0$ the covariance
			\begin{equation} \label{eq:cov}
				\begin{split}
					c_{ij}(t,s) &\coloneqq \Cov(X_{\delta,i}^\Delta(t),X_{\delta,j}^\Delta(s))\\
					&= \frac{1}{\theta_{\delta,i}(\ko)\theta_{\delta,j}(\ko)}\int_0^{t\wedge s} \scapro{S_\theta(t-u)\Delta_\theta K_{\delta,i}}{S_\theta(s-u)\Delta_\theta K_{\delta,j}} \diff{u}\\
					&= \frac{1}{\theta_{\delta,i}(\ko)\theta_{\delta,j}(\ko)}\int_0^{t\wedge s}\scapro{S_\theta(t+s-2u)\Delta_\theta K_{\delta,i}}{\Delta_\theta K_{\delta,j}}\diff{u}\\
					&= \frac{1}{2\theta_{\delta,i}(\ko)\theta_{\delta,j}(\ko)}\scapro{(S_\theta(\abs{t-s})-S_\theta(t+s)) K_{\delta,i}}{(-\Delta_\theta)K_{\delta,j}}.
				\end{split}
			\end{equation}
			Therefore, for $i \neq \ko$,
			\begin{equation} \label{eq:exp_quad}
				\begin{split}
					\E[I_{\delta,i}] &= \int_0^T c_{ii}(t,t)\diff{t}\\
					&= \frac{1}{2\vartheta_i^2}\int_0^T\scapro{(\Id-S_\theta(2t)) K_{\delta,i}}{(-\Delta_\theta)K_{\delta,i}} \diff{t}\\
					&= \frac{T}{2\vartheta_i^2}\int_0^1 \theta(x)K_{\delta,i}'(x)^2 \diff{x}-\frac{1}{4\theta_{\delta,i}(\ko)^2}\scapro{(\Id-S_\theta(2T)) K_{\delta,i}}{K_{\delta,i}}\\
					&= \frac{T}{2\vartheta_i} \delta^{-2} \lVert K^\prime \rVert^2_{L^2}+{\mathcal O}(1),
				\end{split}
			\end{equation}
			where, as can be seen from the last but one line, $\mathcal{O}(1) \leq 0$ and $\abs{{\mathcal O}(1)}\le 1/(4\lowell^2)$.
			\item
			Let $\mathcal{E}$ be the Dirichlet form associated to the Laplacian $\Delta$ with Dirichlet boundary conditions, i.e., $D(\mathcal{E}) = H_0^1((0,1))$ and $\mathcal{E}(u,v) = \int_{(0,1)} \nabla u \nabla v \diff{\lambda}$ for $u,v \in H_0^1((0,1))$. Then, $D(\mathcal{E}) = D(\mathcal{E}_\theta)$ and $\upell (-\Delta) \geq -\Delta_\theta \geq \lowell (-\Delta)$ in the sense that
			\[
			\upell \mathcal{E}(u,u) \geq \mathcal{E}_\theta(u,u) \geq \lowell \mathcal{E}(u,u)
			\quad\text{ for any } u \in  H_0^1((0,1)).
			\]
			Thus, by the argument given in Theorem VI.2.21 of \cite{kato95}, see also p.\ 333 of the same reference, it follows that $\upell^{-1}(-\Delta)^{-1} \leq (-\Delta_\theta)^{-1} \leq \lowell^{-1} (-\Delta)^{-1}$. Consequently, recalling that $\lambda_k \geq \underbars{\lambda} > 0$ for any $k \in \N$, we obtain
			\begin{align*}
				\E[I_{\delta,\ko}] &= \int_0^T \E[X^\Delta_{\delta,\ko}(t)^2] \diff{t} = \int_0^T \int_0^t \lVert S_\theta(u) \Delta K_{\delta,\ko} \rVert^2 \diff{u} \diff{t}\\
				&= \frac{1}{2}\int_0^T \big\langle (\mathrm{Id} - S_\theta(2t))(-\Delta_\theta)^{-1}\Delta K_{\delta,\ko} , \Delta K_{\delta,\ko}\big\rangle \diff{t},
			\end{align*}
		and this quantity lies in the interval
				\begin{align*}& \frac{1}{2}\Big[\Big(T- \frac{1-\mathrm{e}^{-2\underbars{\lambda}T}}{2\underbars{\lambda}}\Big)\big\langle (-\Delta_\theta)^{-1} \Delta K_{\delta,\ko}, \Delta K_{\delta,\ko} \big\rangle, T \big\langle (-\Delta_\theta)^{-1} \Delta K_{\delta,\ko}, \Delta K_{\delta,\ko} \big\rangle\Big]  \\
				&\quad\in \Big[\frac{2\underbars{\lambda} T- 1 +\mathrm{e}^{-2\underbars{\lambda}T}}{4\underbars{\lambda}\upell}\big\lVert \nabla K_{\delta,\ko} \big\rVert^2, \frac{T}{2 \lowell} \big\lVert \nabla K_{\delta,\ko} \big\rVert^2\Big] \\
				&\quad= \Big[\frac{2\underbars{\lambda} T- 1 + \mathrm{e}^{-2\underbars{\lambda}T}}{4\underbars{\lambda}\upell}\lVert K^\prime \rVert^2_{L^2} \delta^{-2}, \frac{T}{2 \lowell} \lVert K^\prime \rVert^2_{L^2} \delta^{-2}\Big].
			\end{align*}
			An alternative bound is obtained using Lemma \ref{lem:sol_c2}, namely
			\begin{align*}
				0 \leq \int_0^T \big\langle S_\theta(2t)(-\Delta_\theta)^{-1} \Delta K_{\delta,\ko}, \Delta K_{\delta,\ko} \big\rangle &\leq \frac{1}{2}\big\langle (-\Delta_\theta)^{-2} \Delta K_{\delta,\ko}, \Delta K_{\delta,\ko} \big\rangle\\
				&= \frac12\big\lVert (-\Delta_\theta)^{-1} \Delta K_{\delta,\ko} \big\rVert^2 \lesssim \delta^{-1}.
			\end{align*}
			This yields
			\[\E[I_{\delta,\ko}] \in \Big[\frac{T}{2\upell} \lVert K^\prime\rVert^2_{L^2}\delta^{-2}, \frac{T}{2\lowell}\lVert K^\prime\rVert^2_{L^2} \delta^{-2} \Big] + \mathcal{O}(\delta^{-1}).\]
			\item
			Since $(X^\Delta_{\delta,i}(t))_{t \geq 0, i= 1,\ldots,n}$ is a centered Gaussian process, it follows from Wick's formula (\cite[Theorem 1.28]{janson97}) that
			\[ \Var\Big(\sum_{i=1}^n\alpha_i I_{\delta,i}\Big) = \sum_{i,j=1}^n\alpha_i\alpha_j\int_0^T\int_0^T 2c_{ij}(t,s)^2 \diff{s}\diff{t}.\]
			In the sequel, we employ tensor products $f^{\otimes 2}(x,y)\coloneqq f(x)f(y)$ and $A^{\otimes 2}f^{\otimes 2}\coloneqq (Af)^{\otimes 2}$ for $f\in L^2((0,1))$ and $A\colon L^2((0,1))\to L^2((0,1))$.
			Using \eqref{eq:cov} and the assumption $\alpha_{\ko} = 0$, we obtain by Bochner integration and spectral calculus
			\begin{align*}
				&\Var\Big(\sum_{i=1}^n\alpha_i I_{\delta,i}\Big)\\
				&\,= \frac12\int_0^T\int_0^T\sum_{i,j=1}^n\frac{\alpha_i\alpha_j}{\theta_{\delta,i}(\ko)^2 \theta_{\delta,j}(\ko)^2}\scapro{((-\Delta_\theta)(S_\theta(\abs{t-s})-S_\theta(t+s)))^{\otimes 2} K_{\delta,i}^{\otimes 2}}{K_{\delta,j}^{\otimes 2}}_{L^2([0,1]^2)}\diff t\diff s\\
				&\,= \frac12\bscapro{\int_0^T\int_0^T((-\Delta_\theta)(S_\theta(\abs{t-s})-S_\theta(t+s)))^{\otimes 2}\diff t \diff s \sum_{i=1}^n\tfrac{\alpha_i}{\theta_{\delta,i}(\ko)^2}K_{\delta,i}^{\otimes 2}}{\sum_{i=1}^n\tfrac{\alpha_i}{\theta_{\delta,i}(\ko)^2}K_{\delta,i}^{\otimes 2}}_{L^2([0,1]^2)}\\
				&\,= \frac12\sum_{k,l\ge 1} \int_0^T\int_0^T \lambda_k\lambda_l(\e^{-\lambda_k\abs{t-s}}-\e^{-\lambda_k(t+s)})(\e^{-\lambda_l\abs{t-s}}-\e^{-\lambda_l(t+s)})\diff t \diff s\\
				&\qquad\qquad \times \Big(\sum_{i=1}^n \tfrac{\alpha_i}{\theta_{\delta,i}(\ko)^2}\scapro{K_{\delta,i}}{e_k}
				\scapro{K_{\delta,i}}{e_l}\Big)^2\\
				&\,\le \frac12\sum_{k,l\ge 1} \int_0^T\int_0^T \lambda_k\lambda_l\e^{-\lambda_k\abs{t-s}}(\e^{-\lambda_l\abs{t-s}}-\e^{-\lambda_l(t+s)})\diff t \diff s \Big(\sum_{i=1}^n\tfrac{\alpha_i}{\theta_{\delta,i}(\ko)^2}\scapro{K_{\delta,i}}{e_k}
				\scapro{K_{\delta,i}}{e_l}\Big)^2\\
				&\,\le \frac12\sum_{k,l\ge 1} \int_0^T\int_0^T \lambda_k\lambda_l\e^{-(\lambda_k+\lambda_l)\abs{t-s}}\diff t \diff s \Big(\sum_{i=1}^n\tfrac{\alpha_i}{\theta_{\delta,i}(\ko)^2}\scapro{K_{\delta,i}}{e_k}
				\scapro{K_{\delta,i}}{e_l}\Big)^2\\
				&\,\le  T \sum_{k,l\ge 1}\frac{\lambda_k\lambda_l}{\lambda_k+\lambda_l}\Big(\sum_{i=1}^n\tfrac{\alpha_i}{\theta_{\delta,i}(\ko)^2}\scapro{K_{\delta,i}}{e_k}
				\scapro{K_{\delta,i}}{e_l}\Big)^2\\
				&\,\le  \frac T2 \sum_{k,l\ge 1}\lambda_k^{1/2}\lambda_l^{1/2}\Big(\sum_{i=1}^n\tfrac{\alpha_i}{\theta_{\delta,i}(\ko)^2}\scapro{K_{\delta,i}}{e_k}
				\scapro{K_{\delta,i}}{e_l}\Big)^2\\
				&\,\le  \frac T2 \Big( \sum_{k,l\ge 1}\lambda_k\lambda_l\Big(\sum_{i=1}^n\tfrac{\alpha_i}{\theta_{\delta,i}(\ko)^2}\scapro{K_{\delta,i}}{e_k}
				\scapro{K_{\delta,i}}{e_l}\Big)^2\Big)^{\frac12} \Big( \sum_{k,l\ge 1}\Big(\sum_{i=1}^n\tfrac{\alpha_i}{\theta_{\delta,i}(\ko)^2}\scapro{K_{\delta,i}}{e_k}
				\scapro{K_{\delta,i}}{e_l}\Big)^2\Big)^{\frac12}\\
				&\,= \frac T2 \Big(\sum_{i,j=1}^n\tfrac{\alpha_i\alpha_j}{\theta_{\delta,i}(\ko)^2 \theta_{\delta,j}(\ko)^2}\scapro{(-\Delta_\theta)^{\frac12}K_{\delta,i}}{(-\Delta_\theta)^{\frac12}K_{\delta,j}}^2
				\Big)^{\frac12} \\
				&\qquad \times\Big(\sum_{i,j=1}^n\frac{\alpha_i\alpha_j}{\theta_{\delta,i}(\ko)^2 \theta_{\delta,j}(\ko)^2}\scapro{K_{\delta,i}}{K_{\delta,j}}^2\Big)^{\frac12}\\
				&\,=  \frac T2 \delta^{-2} \Big(\sum_{i=1}^n\tfrac{\alpha_i^2}{\theta_{\delta,i}(\ko)^4}\Big(\int\theta(x_i+\delta y)K'(y)^2\diff{y}\Big)^2\Big)^{\frac12}\Big(\sum_{i=1}^n\tfrac{\alpha_i^2}{\theta_{\delta,i}(\ko)^4}\Big)^{\frac12}\\
				&\,\leq \frac{T}{2\lowell^3} \delta^{-2} \lVert \alpha \rVert_{\ell^2}^2 \lVert K^\prime \rVert^2_{L^2}.
			\end{align*}
			\item  We have
			\begin{align*}
				c_{\ko,\ko}(t,s) &= \int_0^{t \wedge s} \langle S_\theta(t+s-2u) \Delta K_{\delta,\ko}, \Delta K_{\delta,\ko} \rangle \diff{u} \\
				&= \frac{1}{2}\big\langle (S_\theta(\lvert t -s \rvert) - S_\theta(t+s)) (-\Delta_\theta)^{-1} \Delta K_{\delta,\ko}, \Delta K_{\delta,\ko} \big\rangle\\
				&\in \frac{1}{2}\Big[0 , \big\langle S_\theta(\lvert t -s \rvert)  (-\Delta_\theta)^{-1} \Delta K_{\delta,\ko}, \Delta K_{\delta,\ko} \big\rangle \Big].
			\end{align*}
			Thus,
			\begin{align*}
				\mathrm{Var}(I_{\delta,\ko}) &= \int_0^T \int_0^T 2c_{\ko,\ko}(t,s)^2 \diff{s} \diff{t}\\
				&\leq \frac{1}{2}\int_0^T \int_0^T \big\langle S_\theta(\lvert t -s \rvert)  (-\Delta_\theta)^{-1} \Delta K_{\delta,\ko}, \Delta K_{\delta,\ko} \big\rangle^2 \diff{s} \diff{t} \\
				&= \frac{1}{2}\sum_{k,l \geq 1} \frac{1}{\lambda_k \lambda_l} \langle \Delta K_{\delta,\ko}, e_k \rangle^2 \langle \Delta K_{\delta,\ko}, e_l \rangle^2  \int_0^T \int_0^T \mathrm{e}^{-(\lambda_k + \lambda_l)\lvert t -s \rvert} \diff{s}\diff{t} \\
				&\leq \frac{T}{2} \sum_{k,l \geq 1} \frac{1}{\lambda_k\lambda_l(\lambda_k + \lambda_l)} \langle \Delta K_{\delta,\ko}, e_k \rangle^2 \langle \Delta K_{\delta,\ko}, e_l \rangle^2 \\
				&\leq \frac{T}{4} \Big(\sum_{k \in \N} \lambda_k^{-3/2} \langle \Delta K_{\delta,\ko}, e_k\rangle^2\Big)^2 = \frac{T}{4} \big\lVert (-\Delta_\theta)^{-3/4} \Delta K_{\delta,\ko} \big\rVert^4.
			\end{align*}
			By Lemma \ref{lem:sol_interpol}, $\lVert (-\Delta_\theta)^{-3/4} \Delta K_{\delta,\ko} \rVert \lesssim \delta^{-1/2}$ holds, implying the result.
		\end{enumerate}
	\end{proof}
	
	We now turn to the proof of the concentration inequality for linear combinations of $(I_{\delta,i})_{i=1,\ldots,\delta^{-1}}$ stated in Proposition \ref{prop:conc}.
	It relies on the following result given in \cite{nourdin09}.
	For details on Malliavin calculus, we refer to the standard references \cite{nourdin12,nualart06}.
	
	\begin{theorem}[Theorem 4.1 in \cite{nourdin09}]	\label{theo:nourdin}
		Let $X = (X(h))_{h \in \mathfrak{H}}$ be an isonormal Gaussian process on a real separable Hilbert space $\mathfrak{H}$, and let  $\mathbb{D}^{1,2}$ be the domain of the Malliavin derivative operator $D$ associated to $X$.
		Let $Z \in \mathbb{D}^{1,2}$ have zero mean, and define
		\[g_Z(z) \coloneqq \E\big[\scapro{DZ}{-DL^{-1}Z}_{\mathfrak{H}} \mid Z = z \big], \quad z \in \R,\]
		where $L^{-1}$ is the pseudoinverse of the Ornstein--Uhlenbeck generator $L \coloneqq \sum_{m=0}^\infty -mJ_m$, $J_m$ being the projection onto the $m$-th Wiener chaos.
		Assume that, for some $\alpha \geq 0, \beta > 0$,
		\begin{enumerate}[label = (\roman*), ref = (\roman*)]
			\item $g_Z(Z) \leq \alpha Z + \beta$, $\PP$-a.s.;
			\item the law of $Z$ has a density $\rho$.
		\end{enumerate}
		Then, for all $z > 0$, we have
		\[\PP(Z \geq z) \leq \exp\Big(-\frac{z^2}{2\alpha z + 2\beta} \Big) \quad \text{and} \quad \PP(Z \leq -z) \leq \exp\Big(-\frac{z^2}{2\beta} \Big).\]
	\end{theorem}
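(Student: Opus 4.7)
The plan is to control the Laplace transform of $Z$ via the Malliavin integration-by-parts formula and then optimise a Chernoff-type bound. The cornerstone is the identity
\[
\E[Z F(Z)] = \E\big[F'(Z)\langle DZ, -DL^{-1}Z\rangle_\mathfrak{H}\big] = \E[F'(Z)g_Z(Z)],
\]
valid for smooth $F$ of polynomial growth: it follows by writing $Z = LL^{-1}Z = -\delta(DL^{-1}Z)$, where $\delta$ denotes the divergence operator adjoint to $D$, and using the Malliavin duality $\E[F\,\delta(u)] = \E[\langle DF,u\rangle_\mathfrak{H}]$ together with the chain rule; the second equality is the tower property, with assumption (ii) guaranteeing that $g_Z$ is a genuine function on the support of $Z$.

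For the right tail I would apply this to $F(z) = e^{\lambda z}$, $\lambda\in[0,1/\alpha)$. Writing $\phi(\lambda) = \E[e^{\lambda Z}]$, the identity gives $\phi'(\lambda) = \lambda\E[e^{\lambda Z}g_Z(Z)]$, and invoking assumption (i) produces the differential inequality $(1-\lambda\alpha)\phi'(\lambda)\leq\lambda\beta\,\phi(\lambda)$, equivalently $(\log\phi)'(\lambda)\leq\lambda\beta/(1-\lambda\alpha)$. Integration, combined with the elementary estimate $-\log(1-x)-x \leq x^2/(2(1-x))$ on $[0,1)$ (immediate by differentiation), yields the Bernstein-type bound $\log\phi(\lambda)\leq\lambda^2\beta/(2(1-\lambda\alpha))$. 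Markov's inequality and the optimal choice $\lambda = z/(\alpha z+\beta)$ then produce $\PP(Z\geq z) \leq \exp(-z^2/(2\alpha z + 2\beta))$.

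The left tail is handled symmetrically with $F(z) = e^{-\mu z}$, $\mu>0$, and $\psi(\mu) = \E[e^{-\mu Z}]$. The identity now gives $\psi'(\mu) = \mu\E[e^{-\mu Z}g_Z(Z)]$, and applying $g_Z(Z)\leq\alpha Z + \beta$ leads to $(1+\mu\alpha)\psi'(\mu)\leq\mu\beta\,\psi(\mu)$. The decisive difference from the right tail is that the prefactor $(1+\mu\alpha)\geq 1$ is now on the favourable side: dropping it yields the purely sub-Gaussian estimate $(\log\psi)'(\mu)\leq\mu\beta$, hence $\log\psi(\mu)\leq\mu^2\beta/2$, and Chernoff with $\mu = z/\beta$ produces $\PP(Z\leq -z)\leq\exp(-z^2/(2\beta))$, free of any $\alpha$-dependence as claimed.

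The principal technical obstacle is rigorously justifying the integration-by-parts step for the unbounded test functions $e^{\pm\lambda Z}$, since the MGFs are not a priori known to be finite. I would resolve this by running the above computation first on bounded truncations $F_n(z) = (e^{\lambda z})\wedge n$ (or smooth variants compatible with $\mathbb{D}^{1,2}$): the resulting differential inequality for the truncated log-MGFs holds uniformly in $n$, and a monotone-convergence argument then upgrades it to finiteness and smoothness of $\phi$ (resp.\ $\psi$) on the relevant interval, after which the original identity applies and the optimisation above closes the argument.
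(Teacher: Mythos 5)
The paper does not prove this statement: it is quoted verbatim as Theorem 4.1 of Nourdin and Viens (2009) and used as an external ingredient, so there is no in-paper proof to compare against. That said, your proposal is a correct reconstruction of the argument in the cited source, which likewise proceeds by the Herbst-type scheme: establish the Malliavin integration-by-parts identity $\E[Z F(Z)] = \E[F'(Z)\, g_Z(Z)]$ from $Z=-\delta(DL^{-1}Z)$ and the duality between $D$ and $\delta$, specialise to $F(z)=e^{\lambda z}$ to get $\phi'(\lambda)=\lambda\,\E[e^{\lambda Z}g_Z(Z)]$, feed in the sub-affine bound on $g_Z$ to obtain $(1-\lambda\alpha)\phi'(\lambda)\le\lambda\beta\,\phi(\lambda)$, integrate and optimise the Chernoff bound. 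The arithmetic checks out: the elementary inequality $-\log(1-x)-x\le x^2/(2(1-x))$ on $[0,1)$ is correct, the choice $\lambda=z/(\alpha z+\beta)$ stays in $(0,1/\alpha)$ precisely because $\beta>0$, and for the left tail the step of discarding the factor $(1+\mu\alpha)$ is justified because $\psi(\mu)=\E[e^{-\mu Z}]$ is convex with $\psi'(0)=-\E[Z]=0$, hence $\psi'\ge 0$ on $\R_+$. Your remark that the unbounded test functions $e^{\pm\lambda Z}$ need a truncation-and-limit justification is exactly the technical point one has to address to make the integration-by-parts step rigorous; a $C^1$-smoothed bounded truncation together with the a priori monotonicity of $\phi$ and $\psi$ in $\lambda,\mu\ge 0$ is the standard way to close that gap, and it is the same device used in the original reference.
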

	
	In order to apply this result, we must first construct an appropriate Hilbert space for our specific setting. Let $\mathcal{E}$ be the set of $\R$-valued stepfunctions on $[0,T]^n\setminus \{0\}^n$ that can be expressed as linear combinations of indicator functions $\mathbf{1}_{[0,t_1]\times \cdots \times [0,t_n]}$, $t_i \in [0,T]$ and $t_j \neq 0$ for some $j \in \{1,\ldots,n\}$, and let $\mathfrak{H}$ be the separable Hilbert space obtained by closing $\mathcal{E}$ with respect to the inner product determined by
	\[\scapro{f}{g}_{\mathfrak{H}} = \sum_{k=1}^K \sum_{l=1}^L a_kb_l \sum_{i,j =1}^n\E[X^{\Delta}_{\delta,i}(t_{k,i})X^{\Delta}_{\delta,j}(s_{l,j})] = \sum_{k=1}^K \sum_{l=1}^L a_kb_lc_{i,j}(t_{k,i},s_{l,j})\]
	for $f = \sum_{k=1}^K a_k \mathbf{1}_{[0,t_{k,1}] \times \cdots \times [0,t_{k,n}]} \in \mathcal{E}$ and $g = \sum_{l=1}^Lb_l \mathbf{1}_{[0,s_{l,1}] \times \cdots \times [0,s_{l,n}]} \in \mathcal{E}$.
	For
	\[
	h=\sum_{k=1}^K c_k \mathbf{1}_{[0,t_{k,1}]\times \cdots \times [0,t_{k,n}]} \in \mathcal{E},
	\]
	we set $X(h) = \sum_{k=1}^K c_k \sum_{i=1}^n X^{\Delta}_{\delta,i}(t_{k,i})$ and, for any $h \in \mathfrak{H}$, let $X(h)$ be the $L^2$ limit of $(X(h_n))_{n \in \N}$ for some sequence $(h_n)_{n \in \N} \subset \mathcal{E}$ converging to $h$ in $(\mathfrak{H}, \langle\cdot,\cdot \rangle_{\mathfrak{H}})$, then $(X(h))_{h \in \mathfrak{H}}$ is an isonormal  Gaussian process (cf.\ \cite[Proposition 2.1]{altmeyer21}) over $\mathfrak{H}$.
	In particular, since $X_0 \equiv 0$ by assumption, this implies that, for any $t>0$, $X^{\Delta}_{\delta,i}(t) = X(\mathbf{1}^{(i)}_{[0,t]})$, where $\mathbf{1}^{(i)}_{[0,t]} = \mathbf{1}_{\prod_{j=1}^n A_j}$ for $A_i = [0,t]$ and $A_j = \{0\}$ for $i \neq j$.

	\begin{proof}[Proof of Proposition \ref{prop:conc}]
		The Malliavin derivative of $\varphi(X(\mathbf{1}^{(i)}_{[0,t]})) = X^{\Delta}_{\delta,i}(t)^2$ is  $D\varphi(X(\mathbf{1}^{(i)}_{[0,t]})) = 2X^{\Delta}_{\delta,i}(t) \mathbf{1}^{(i)}_{[0,t]}$. Introduce
		\[Z = \sum_{i=1}^n \alpha_i(I_{\delta,i} - \E[I_{\delta,i}]) = \sum_{i=1}^n \alpha_i \int_0^T (X^{\Delta}_{\delta,i}(t)^2 - \E[X^{\Delta}_{\delta,i}(t)^2]) \diff{t},\]
		which is an element of the second Wiener chaos associated with $(X(h))_{h \in \mathfrak{H}}$ since for the second Hermite polynomial $H_2(x) = x^2 -1$ and $h^N_{k,i} = \mathbf{1}^{(i)}_{[0,kT/N]}$ we can write $Z$ as the $L^2$ limit
		\[Z = \lim_{N \to \infty} \sum_{k=1}^N \sum_{i=1}^n \alpha_i \frac{T\lVert h^N_{k,i} \rVert^2_{\mathfrak{H}}}{N} H_2(X(h^N_{k,i}/\Vert h^N_{k,i} \rVert_{\mathfrak{H}})).\]
		This implies that the law of $Z$ has a Lebesgue density, $Z \in \mathbb{D}^{1,2}$ and its Malliavin derivative is given by
		\[DZ = 2\sum_{i=1}^n \alpha_i \int_0^T X^{\Delta}_{\delta,i}(t) \mathbf{1}^{(i)}_{[0,t]} \diff{t}.\]
		Then, $L^{-1}Z = -\tfrac{1}{2} Z$ and, therefore, for the orthonormal eigensystem $(e_k,\lambda_k)$ of $-\Delta_\theta$, it follows for $(\alpha_i)_{i=1}^n \in \R_+^n \setminus \{0\}$ with $\alpha_{\ko} = 0$ by similar reasoning as in the proof of Lemma \ref{lem:basic_est},
		\begin{align*}
			&\scapro{DZ}{-DL^{-1}Z}_{\mathfrak{H}}\\
			&\quad= \tfrac{1}{2} \lVert DZ \rVert^2_{\mathfrak{H}}\\
			&\quad= 2 \sum_{i,j = 1}^n \alpha_i \alpha_j\int_0^T \int_0^T  X^{\Delta}_{\delta,i}(t)X^{\Delta}_{\delta,j}(s) \scapro{\mathbf{1}^{(i)}_{[0,t]}}{\mathbf{1}^{(j)}_{[0,s]}}_{\mathfrak{H}} \diff{t}\diff{s}\\
			&\quad= 2\sum_{i,j=1}^n \alpha_i \alpha_j \int_0^T \int_0^T  X^{\Delta}_{\delta,i}(t)X^{\Delta}_{\delta,j}(s) c_{i,j}(t,s) \diff{t}\diff{s}\\
			&\quad= \sum_{k \in \N} \int_0^T \int_0^T \lambda_k (\mathrm{e}^{-\lambda_k \lvert t-s \rvert} - \mathrm{e}^{-\lambda_k (t+s)}) \sum_{i = 1}^n \langle \tfrac{\alpha_i}{\theta_{\delta,i}(\ko)} X^{\Delta}_{\delta,i}(t)K_{\delta,i}, e_k \rangle \sum_{j = 1}^n \langle \tfrac{\alpha_j}{\theta_{\delta,j}(\ko)} X^{\Delta}_{\delta,j}(s)K_{\delta,j}, e_k \rangle \diff{t}\diff{s} \\
			&\quad\leq \sum_{k \in \N} \int_0^T \int_0^T \lambda_k (\mathrm{e}^{-\lambda_k \lvert t-s \rvert} - \mathrm{e}^{-\lambda_k (t+s)}) \Big(\sum_{i = 1}^n \langle \tfrac{\alpha_i}{\theta_{\delta,i}(\ko)} X^{\Delta}_{\delta,i}(t)K_{\delta,i}, e_k \rangle \Big)^2 \diff{t} \diff{s}\\
			&\quad\leq \sum_{k \in \N} \int_0^T \Big(\sum_{i = 1}^n \langle \tfrac{\alpha_i}{\theta_{\delta,i}(\ko)} X^{\Delta}_{\delta,i}(t)K_{\delta,i}, e_k \rangle \Big)^2 \int_0^T \lambda_k \mathrm{e}^{-\lambda_k\lvert t - s \rvert} \diff{s} \diff{t}\\
			&\quad\leq 2\sum_{k \in \N} \int_0^T \Big(\sum_{i = 1}^n \langle \tfrac{\alpha_i}{\theta_{\delta,i}(\ko)} X^{\Delta}_{\delta,i}(t)K_{\delta,i}, e_k \rangle \Big)^2 \diff{t}\\
			&\quad= 2\sum_{i,j=1}^n \int_0^T \tfrac{\alpha_i \alpha_j}{\theta_{\delta,i}(\ko)\theta_{\delta,j}(\ko)} X_{\delta,i}^\Delta(t) X_{\delta,j}^\Delta(t) \langle K_{\delta,i}, K_{\delta,j} \rangle \diff{t}\\
			&\quad = 2\sum_{i=1}^n \tfrac{\alpha_i^2}{\theta_{\delta,i}(\ko)^2} \int_0^T X^\Delta_{\delta,i}(t)^2 \diff{t}\\
			&\quad\leq 2 \frac{\lVert \alpha \rVert_{\infty}}{\lowell^2}\Big(Z + \sum_{i=1}^n \alpha_i \E[I_{\delta,i}]\Big),
		\end{align*}
		whence,
		\begin{align*}
			\PP(\lvert Z \rvert \geq z) &= \PP\Big(\Big\lvert \sum_{i=1}^n \alpha_i(I_{\delta,i} - \E[I_{\delta,i}]) \Big\rvert \geq z\Big)\\
			&\leq 2\exp\bigg(-\frac{\lowell^2}{4\lVert \alpha \rVert_{\infty}} \frac{z^2}{z + \sum_{i=1}^n \alpha_i \E[I_{\delta,i}]} \bigg),
		\end{align*}
		follows from Theorem \ref{theo:nourdin}. In particular, since by Lemma \ref{lem:basic_est} we have
		\[\sum_{i=1}^n \alpha_i \E[I_{\delta,i}] \leq \lVert \alpha \rVert_{\ell^1} \frac{T}{2\lowell} \lVert K^\prime \rVert^2_{L^2} \delta^{-2},\]
		we obtain
		\begin{align*}
			\PP\Big(\Big\lvert \sum_{i=1}^n \alpha_i(I_{\delta,i} - \E[I_{\delta,i}]) \Big\rvert \geq z\Big) \leq 2\exp\bigg(-\frac{\lowell^2}{2\lVert \alpha \rVert_{\infty}} \frac{z^2}{2 z + \lVert \alpha \rVert_{\ell^1} T\lowell^{-1}\lVert K^\prime \rVert^2_{L^2}\delta^{-2}} \bigg).
		\end{align*}
	\end{proof}
	
	\begin{proof}[Proof of Proposition \ref{prop:coupling}]
		For $i \neq j$, $B_{\delta,i}$ and $B_{\delta,j}$ are independent Brownian motions with respect to the same filtration.
		Hence, the quadratic covariations satisfy $\langle M_{\delta,i}, M_{\delta,j} \rangle \equiv 0$ a.s., and by Knight's multivariate extension of the  Dambis--Dubins--Schwarz construction \cite[Theorem V.1.9]{revyor99}, $(\bar M_{\delta,i})_i$ has the law of a vector of independent Brownian motions at times $\bar I_{\delta,i}$ in coordinate $i$, that is, $\bar M_{\delta,i}\sim N(0,\bar I_{\delta,i})$ and $(\bar M_{\delta,i})_{i=1,\ldots, \delta^{-1}}$ are independent.
		Define  the continuous martingale $(\mathcal{M}_{ \delta,i}(t))_{t\geq 0}$ by setting
		\begin{align*}
			\mathcal{M}_{\delta,i}(t) \coloneqq \int_0^t \big(\one_{\{s\le T\}}- \one_{\{s\le\sigma_i\}}\big)X_{\delta,i}^\Delta(s) \diff{B_{\delta,i}(s)}, \quad t \geq 0.
		\end{align*}
		Fix $\alpha\in\R^n$.
		Then, the process $\bar{\mathcal{M}}_t \coloneqq \sum_{i = 1}^n \alpha_i \mathcal{M}_{\delta,i}(t)$, $t \geq 0$, is a continuous martingale with
		\begin{align*}
			\sum_{i = 1}^n \alpha_i (M_{\delta,i}-\bar M_{\delta,i}) &= \sum_{i = 1}^n \alpha_i \int_0^\infty \big(\one_{\{t\le T\}}- \one_{\{t\le\sigma_i\}}\big) X_{\delta,i}^\Delta(t) \diff{B_{\delta,i}(t)} = \bar{\mathcal{M}}_\infty,\\
			\sum_{i = 1}^n \alpha_i^2 \abs{I_{\delta,i}-\bar I_{\delta,i}} &= \sum_{i = 1}^n \alpha_i^2 \int_0^\infty \big(\one_{\{t\le T\}}- \one_{\{t\le\sigma_i\}}\big)^2 X_{\delta,i}^\Delta(t)^2 \diff{t} = \langle \bar{\mathcal{M}} \rangle_\infty,
		\end{align*}
		where the representation of the second sum as the limit of the quadratic variations of $\overbar{\mathcal{M}}_t$ follows from the independence of  the Brownian motions $B_{\delta,i}$.
		Thus, for any $z, L > 0$, we infer by the continuous martingale Bernstein inequality (see, e.g., \cite[Exercise 3.16]{revyor99})
		\[\PP\Big( \sum_{i=1}^n \alpha_i (M_{\delta,i}-\bar M_{\delta,i})\ge z,\, \sum_{i=1}^n \alpha_i^2\abs{I_{\delta,i}-\bar I_{\delta,i}}\leq L\Big) = \PP(\bar{\mathcal{M}}_\infty \geq z, \langle \bar{\mathcal{M}} \rangle_\infty \le L)\le \e^{-\frac{z^2}{2L}}.\]
		Repeating the argument for the continuous martingale $-\overbar{\mathcal{M}}$ then yields the second claim.
	\end{proof}
	
	Next, we provide the proof for the uniform orders of the empirical processes $I_{T,\delta}(\cdot) - \E[I_{T,\delta}(\cdot)]$ and $M_{T,\delta(\cdot)}$.
	\begin{proof}[Proof of Lemma \ref{lem:aux0}]
		We start with verifying \eqref{eq:uni_qv}.
		Note that
		\begin{equation}\label{eq:err5}
			\begin{split}
				&\big\lvert I_{T,\delta}(\theta_-,\theta_+,\theta_\circ,h) - \E[I_{T,\delta}(\theta_-,\theta_+,\theta_\circ,h)] \big\rvert\\
				&\,= \frac{1}{2}\Big\vert\Big(\sum_{i =1, i \neq \koo}^{\lceil h / \delta \rceil \wedge \koo} + \one_{\{\lceil h/\delta \rceil  \neq \koo\}}\sum_{i = \lceil h/\delta \rceil \wedge \koo + 1, i \neq \koo}^{\lceil h / \delta \rceil \vee  \koo} + \sum_{i= \lceil h/\delta \rceil \vee \koo+1, i \neq \koo}^n \Big) (\theta_{\delta,i}(\lceil h/\delta\rceil) - \theta^0_{\delta,i})^2(I_{\delta,i} - \E[I_{\delta,i}]) \Big\rvert \\
				&\, \lesssim (\theta_- -\theta^0_-(\delta))^2 \Big \lvert  \sum_{i =1, i \neq \koo}^{\lceil h / \delta \rceil \wedge \koo -1} (I_{\delta,i}- \E[I_{\delta,i}]) \Big\rvert\  \\
				&\,\quad+  \max\big\{(\theta_\circ - \theta^0_\pm(\delta))^2\big\} \lvert I_{\delta,\lceil h/\delta \rceil} - \E[I_{\delta,\lceil h/\delta \rceil}] \rvert\one_{\{\lceil h/\delta \rceil \neq \koo\}} \\
				&\,\quad+ \max\big\{(\theta_\pm -\theta^0_\mp(\delta))^2\big\} \Big \lvert  \sum_{i=\lceil h / \delta \rceil \wedge \koo + 1,i \notin \{\koo,\lceil h/\delta\rceil\}}^{\lceil h / \delta \rceil \vee \koo} (I_{\delta,i}- \E[I_{\delta,i}])\Big\rvert \one_{\{\lceil h/\delta \rceil \neq \koo\}}\\
				&\,\quad + (\theta_+ - \theta^0_+(\delta))^2 \Big \lvert  \sum_{i = \lceil h / \delta \rceil \vee \koo + 1 , i \neq \koo}^{n} (I_{\delta,i}- \E[I_{\delta,i}]) \Big\rvert.
			\end{split}
		\end{equation}
		It follows
		\begin{align*}
			&\sup_{\chi \in \Theta \times (0,1]} \big\vert I_{T,\delta}(\chi) - \E[I_{T,\delta}(\chi)] \big\vert \\
			&\,\lesssim_{\lowell,\upell}\max_{k=1, \ldots,\delta^{-1}} \Big \lvert \sum_{i \in [k] \setminus \{\koo\}} (I_{\delta,i} - \E[I_{\delta,i}]) \Big\rvert + \max_{k=1, \ldots,\delta^{-1}-1} \Big \lvert \sum_{i = k+1, i \neq \koo}^n (I_{\delta,i} - \E[I_{\delta,i}]) \Big\rvert\\
			&\,\quad + \max_{k_1,k_2 \in \{1, \ldots,\delta^{-1} - 1\}, k_1 \leq k_2} \Big \lvert \sum_{i = k_1+1, i\neq \koo}^{k_2} (I_{\delta,i} - \E[I_{\delta,i}]) \Big\rvert  + \max_{k \in [\delta]^{-1}} \lvert I_{\delta,k} - \E[I_{\delta,k}] \rvert.
		\end{align*}
		Hence, for some constant $C > 0$ depending only on $\lowell$ and $\upell$, first using a union bound and then Proposition \ref{prop:conc}, we obtain, for any $z > 0$,
		\begin{align*}
			&\PP\Big(\sup_{\chi \in \Theta \times (0,1]} \big\vert I_{T,\delta}(\chi) - \E[I_{T,\delta}(\chi)] \big\vert \geq z\Big)\\
			&\,\leq \sum_{k=1}^{n} \PP\Big(\Big \lvert \sum_{i \in [k] \setminus \{\koo\}} (I_{\delta,i} - \E[I_{\delta,i}]) \Big\rvert \geq z/(4C) \Big) + \sum_{k=1}^{n-1} \PP\Big(\Big \lvert \sum_{i = k+1, i \neq \koo }^n (I_{\delta,i} - \E[I_{\delta,i}]) \Big\rvert \geq z/(4C)\Big) \\
			&\,\quad + \sum_{k_1,k_2 \in \{1, \ldots,\delta^{-1} - 1\}, k_1 \leq k_2} \PP\Big(\Big \lvert \sum_{i = k_1+1, i \neq \koo}^{k_2} (I_{\delta,i} - \E[I_{\delta,i}]) \Big\rvert \geq z/(4C) \Big)\\
			&\,\quad + \sum_{k \in [n] \setminus \{\koo\}} \PP\big(\lvert I_{\delta,k}- \E[I_{\delta,k}] \rvert \geq z/(4C)\big)\\
			&\, \leq (3\delta^{-1} + \delta^{-2}/4) \exp\Big(-\frac{\lowell^2}{32} \frac{z^2}{Cz/4 +  C^2T\lowell^{-1}\lVert K^\prime \rVert^2_{L^2}\delta^{-3}} \Big),
		\end{align*}
		which establishes \eqref{eq:uni_qv}.
		It remains to verify \eqref{eq:uni_mart}.
		Let
		\[\overbar{M}_{T,\delta}^{(1)}(\chi) \coloneqq \sum_{i \in [n] \setminus \{\koo\}} \theta_{\delta,i}(\lceil h/\delta \rceil) \overbar{M}_{\delta,i}, \quad \overbar{M}_{T,\delta}^{(2)}(\chi) \coloneqq \sum_{i \in [n] \setminus \{\koo\}} \theta_{\delta,i}^0 \overbar{M}_{\delta,i}.\]
		Then, $\overbar{M}_{T,\delta}^{(1)}(\theta_-,\theta_+,h)$ can be written as
		\[ \theta_- \sum_{i \in [\lceil h/\delta \rceil -1] \setminus \{\koo\}} \overbar{M}_{\delta,i} + \theta_+ \sum_{i \ge \lceil h/\delta \rceil + 1, i \neq \koo} \overbar{M}_{\delta,i} + \theta_\circ \overbar{M}_{\delta,\lceil h/\delta \rceil} \one_{\{\lceil h/\delta \rceil \ne \koo\}},
		\]
		implying that $\sup_{\chi \in \Theta \times (0,1]} \big\vert \overbar{M}_{T,\delta}^{(1)}(\chi) \big\vert$ is upper bounded by
		\begin{align*}
		&\upell \Big(\max_{k =1, \ldots, \delta^{-1}} \Big\vert \sum_{i \in [k]\setminus\{\koo\}} \overbar{M}_{\delta,i}\Big\vert + \max_{k =1, \ldots, \delta^{-1}} \Big\vert \sum_{i \in [k]\setminus\{n-\koo+1\}} \overbar{M}_{\delta,n - (i-1)}\Big\vert + \max_{k \in [\delta^{-1}] \setminus \{\koo\}} \lvert \overbar{M}_{\delta,k} \rvert\Big)\\
			&\quad\eqqcolon \upell \Big(\max_{k=1,\ldots,\delta^{-1}} \lvert Y_k \rvert + \max_{k=1,\ldots,\delta^{-1}} \lvert \tilde{Y}_k \rvert + \max_{k \in [\delta^{-1}] \setminus \{\koo\}} \lvert \overbar{M}_{\delta,k} \rvert\Big),
		\end{align*}
		where $(Y_k)$ and $(\tilde{Y}_k)$ are martingales in $k$, due to the independence of the zero mean summands provided by Proposition \ref{prop:coupling}. Since $\overbar{M}_{\delta,k} \sim N(0,\overbar{I}_{\delta,k})$ with $\overbar{I}_{\delta,k} = \E[I_{\delta,k}] \lesssim \delta^{-2}$ for $k \neq \koo$, a union bound immediately yields
		\[\max_{k \in [\delta^{-1}] \setminus\{\koo\}} \lvert \overbar{M}_{\delta,k} \rvert = \mathcal{O}_{\PP}(\delta^{-1}\sqrt{\log(\delta^{-1})}).\]
		Using moreover that Lemma \ref{lem:basic_est} implies
		\[\E[Y_n^2] = \E[\tilde{Y}_n^2] = \sum_{i \in [n]\setminus \koo} \E[I_{\delta,i}] \lesssim \delta^{-3},\]
		we obtain from Doob's (sub)martingale inequality that
		\begin{equation*}\label{eq:eq:err1}
			\sup_{\chi \in \Theta \times (0,1]} \big\vert \overbar{M}_{T,\delta}^{(1)}(\chi) \big\vert = \mathcal{O}_{\PP}(\delta^{-3/2}).
		\end{equation*}
		The same arguments give
		\begin{equation*}\label{eq:err2}
			\sup_{\chi \in \Theta \times (0,1]} \big\vert \overbar{M}_{T,\delta}^{(2)}(\chi) \big\vert = \mathcal{O}_{\PP}(\delta^{-3/2}),
		\end{equation*}
		such that \eqref{eq:uni_mart} will follow once we show that
		\begin{equation} \label{eq:uni_mart2}
			\sup_{\chi \in \Theta \times (0,1]} \big\vert M_{T,\delta}(\chi) - (\overbar{M}_{T,\delta}^{(1)}(\chi) - \overbar{M}_{T,\delta}^{(2)}(\chi)) \big\vert = \mathcal{O}_{\PP}(\delta^{-3/2}).
		\end{equation}
		Similarly to the calculations above, we can write
		\begin{align*}
			&\sup_{\chi \in \Theta \times (0,1]} \big\lvert M_{T,\delta}(\chi) - (\overbar{M}_{T,\delta}^{(1)}(\chi) - \overbar{M}_{T,\delta}^{(2)}(\chi)) \big\rvert \\
			&\, \lesssim \upell \Big(\max_{k =1, \ldots, \delta^{-1}} \Big\vert \sum_{i \in [k]\setminus\{\koo\}} (M_{\delta,i} - \overbar{M}_{\delta,i})\Big\vert \\
			&\qquad+ \max_{k =1, \ldots, \delta^{-1}} \Big\vert \sum_{i \in [k]\setminus\{n-\koo+1\}} (M_{\delta,n-(i-1)}-\overbar{M}_{\delta,n - (i-1)})\Big\vert + \max_{k \in [\delta^{-1}]\setminus\{\koo\}} \lvert M_{\delta,k} - \overbar{M}_{\delta,k} \rvert \Big).
		\end{align*}
		For any $L,z > 0$, we have
		\begin{align*}
			&\PP\Big(\max_{k =1, \ldots, \delta^{-1}} \Big\vert \sum_{i \in [k]\setminus\{\koo\}} (M_{\delta,i} - \overbar{M}_{\delta,i})\Big\vert \geq z \Big)\\
			&\,\leq \PP\Big(\max_{k =1, \ldots, \delta^{-1}} \Big\vert \sum_{i \in [k]\setminus\{\koo\}} (M_{\delta,i} - \overbar{M}_{\delta,i})\Big\vert \geq z , \sum_{i \in [n] \setminus \{\koo\}} \lvert I_{\delta,i} - \E[I_{\delta,i}] \rvert \leq L\Big)\\
			&\,\quad+ \PP\Big(\sum_{i \in [n] \setminus \{\koo\}} \lvert I_{\delta,i} - \E[I_{\delta,i}] \rvert > L \Big)\\
			&\,\leq \sum_{k=1}^{\delta^{-1}} \PP\Big( \Big\vert \sum_{i \in [k]\setminus\{\koo\}} (M_{\delta,i} - \overbar{M}_{\delta,i})\Big\vert \geq z , \sum_{i \in [k] \setminus \{\koo\}} \lvert I_{\delta,i} - \E[I_{\delta,i}] \rvert \leq L\Big)\\
			&\,\quad+ \PP\Big(\delta^{-1} \sum_{i \in [n] \setminus \{\koo\}} (I_{\delta,i} - \E[I_{\delta,i}])^2 > L^2 \Big)\\
			&\,\leq 2 \delta^{-1} \mathrm{e}^{-\frac{z^2}{L}} + \frac{\delta^{-4} T \lVert K^\prime \rVert^2_{L^2}}{2\lowell^3 L^2},
		\end{align*}
		where in the second inequality the Cauchy--Schwarz inequality was used for the sum in the second probability, while the final inequality is a consequence of Proposition \ref{prop:coupling} and Lemma \ref{lem:basic_est}, combined with Markov's inequality.
		Thus, for the choice $L = R_L \delta^{-2}$ with $R_L \to \infty$ and $z_L = R_L \delta^{-1} \sqrt{\log(\delta^{-1})}$, we obtain
		\[\PP\Big(\max_{k =1, \ldots, \delta^{-1}} \Big\vert \sum_{i \in [k]\setminus\{\koo\}} (M_{\delta,i} - \overbar{M}_{\delta,i})\Big\vert \geq z_L \Big) \longrightarrow 0, \quad \text{ as } R_L \to \infty,\]
		whence
		\begin{equation*}\label{eq:err3}
			\max_{k =1, \ldots, \delta^{-1}} \Big\vert \sum_{i \in [k]\setminus\{\koo\}} (M_{\delta,i} - \overbar{M}_{\delta,i})\Big\vert = \mathcal{O}_{\PP}\big(\delta^{-1} \sqrt{\log(\delta^{-1})}\big) = o_{\PP}(\delta^{-3/2}).
		\end{equation*}
		The same arguments also yield
		\begin{align*}
			\max_{k =1, \ldots, \delta^{-1}} \Big\vert \sum_{i \in [k]\setminus\{n-\koo+1\}} (M_{\delta,n- (i-1)} - \overbar{M}_{\delta,n-(i-1)})\Big\vert &=  o_{\PP}(\delta^{-3/2}),\\
			\max_{k \in [\delta^{-1}]\setminus\{\koo\}} \lvert M_{\delta,k} - \overbar{M}_{\delta,k} \rvert &=  o_{\PP}(\delta^{-3/2}),
		\end{align*}
		which establishes \eqref{eq:uni_mart2} and therefore proves \eqref{eq:uni_mart}.
	\end{proof}
	
	\section{Remaining proofs for Section \ref{sec:est}}\label{app:proof_rem}
	We start by proving the upper bound for the error term $R_{\delta,\koo}$ arising in the representation of our simultaneous M-estimator $(\hat\theta_-^\delta,\hat\theta_+^\delta, \hat{\theta}^\delta_\circ,\hat{\tau}^\delta)$.
	
	\begin{proof}[Proof of Proposition \ref{lem:remainder}]
		Let $\theta = \theta^0(\delta)$, $\theta_- = \theta_-^0(\delta)$ and $\tau = \tau^0$. Let us first treat the case \ref{err1} concerning the $L^1$-bound of $R_{\delta,\koo}$, then \ref{err2} the variance bound on $R_{\delta,\koo}$ and finally prove \ref{err3} the existence of $\theta^0_\circ \in [\lowell,\upell]$ such that $\lvert \E[R_{\delta,\koo}(\theta^0_\circ)] \rvert \lesssim \delta^{-1}$.
		\begin{enumerate}[label = (\roman*), ref = (\roman*),wide, labelwidth=!, labelindent=0pt]
			\item \label{err1}
			By the Cauchy--Schwarz inequality, $\lvert R_{\delta,\koo}(\theta^\prime) \rvert $ is upper bounded by
			\[\Big(\int_0^T X^\Delta_{\delta,\koo}(t)^2 \diff{t} \Big)^{1/2} \Big(\int_0^T \Big(\int_0^t \langle \Delta_{\vartheta} S_{\vartheta}(t-s)K_{\delta,\koo} - \theta^\prime S_\theta(t-s) \Delta K_{\delta,\koo},\diff{W_s} \rangle\Big)^2  \diff{t} \Big)^{1/2}.\]
			Hence, the Cauchy--Schwarz inequality and Fubini's theorem show
			\begin{equation}
				\begin{split}\label{eq:est1}
					\E[ \lvert R_{\delta,\koo}(\theta^\prime) \rvert] &\leq \sqrt{\E[I_{\delta,\koo}]} \sqrt{\int_0^T \int_0^t \lVert \Delta_{\vartheta} S_{\vartheta}(t-s)K_{\delta,\koo} - \theta^\prime S_\theta(t-s) \Delta K_{\delta,\koo} \rVert^2 \diff{s} \diff{t}}\\ &\eqqcolon \sqrt{\mathcal{I}_1} \sqrt{\mathcal{I}_2(\theta^\prime)}.
				\end{split}
			\end{equation}
			The crux of the problem is now that in general $K_{\delta,\koo}$ does not belong to the domain $D(\Delta_\theta)$, which prevents us from swapping the order of application of $\Delta_\theta$ and $S_\theta(t-s)$. To resolve this problem, we construct an appropriate approximation of $K_{\delta,\koo}$ within the domain $D(\Delta_\theta)$. Let $\varphi \in C^\infty(\R)$ such that $\mathrm{supp}(\varphi) \subset [-1/2,1/2]$ and $\varphi^\prime(0) = 1$.
			For $\varepsilon > 0$, let
			\[\varphi_{\varepsilon}(x) \coloneqq \varepsilon \varphi((x-\tau)/\varepsilon), \quad x \in \modelspace.\]
			It is easily seen that $\varphi_\varepsilon \to 0$ in $H^1(\modelspace)$ as $\varepsilon \to 0$.
			Thus, $K^{\varepsilon}_{\delta,\koo} \coloneqq K_{\delta,\koo} - \delta^{-3/2}K^{\prime}((\tau-x_{\koo})/\delta)\varphi_{\varepsilon}$ converges to $K_{\delta,\koo}$ in $H^1(\modelspace)$ as $\varepsilon \to 0$.
			Moreover, for $\varepsilon > 0$ small enough, $\varphi_\varepsilon \in C_c^\infty(\modelspace)$ since $\tau \notin \{0,1\}$, and since $\varphi^\prime(0) = 1$ it follows that $\nabla K^{\varepsilon}_{\delta,\koo}(\tau) = 0$.
			Therefore, for $\varepsilon > 0$ small enough, Lemma \ref{lem:divop} implies that $K^{\varepsilon}_{\delta,\koo} \in D(\Delta_\theta)$ and $\Delta_{\theta} K^{\varepsilon}_{\delta,\koo} = \theta \Delta K^{\varepsilon}_{\delta,\koo}$. In particular, $\Delta_\theta S_\theta(s) K^{\varepsilon}_{\delta,\koo} = S_\theta(s) \Delta_\theta K^\varepsilon_{\delta,\koo}$. Let now $\mathcal{E}$ be the Dirichlet form given by $\mathcal{E}(u,v) = \int_0^1 \nabla u \nabla v \diff{\lambda}$ for $u,v$ belonging to the  domain $D(\mathcal{E}) = \{u \in H^1((0,1)): u(0) = 0\}$. The associated self-adjoint operator on $L^2((0,1))$ is the Laplacian subject to mixed homogeneous Dirichlet--Neumann boundary conditions, i.e., $u(0) = 0$ and $\nabla u(1) = 0$ for any $u \in D(\Delta)$. Moreover, it is well-known that the spectrum of the positive self-adjoint operator $-\Delta$ is discrete and bounded from below by some strictly positive constant, whence $(-\Delta)^{-1}$ exists as a bounded operator from $L^2((0,1))$ to $D(-\Delta)$. Since for any
			\[u \in D((-\Delta_\theta)^{1/2}) = D(\mathcal{E}_\theta) \subset D(\mathcal{E}) = D((-\Delta)^{1/2}),\]
			we have $\mathcal{E}_\theta(u,u) \geq \lowell \mathcal{E}(u,u)$, it now follows from the argument in the proof of Theorem VI.2.21 in \cite{kato95} that $(-\Delta_{\theta})^{-1} \leq \lowell^{-1} (-\Delta)^{-1}$.
			Letting  $\tilde{K}^{\varepsilon}_{\delta,\koo} \coloneqq K^{\varepsilon}_{\delta,\koo} - K^{\varepsilon}_{\delta,\koo}(\tau)$ and $\varphi \in C_c((0,1))$, integration by parts shows
			\begin{align*}
				\langle \one_{[\tau,1)}\tilde{K}^{\varepsilon}_{\delta,\koo}, \varphi^{\prime\prime} \rangle &= \int_\tau^1 \tilde{K}^\varepsilon_{\delta,\koo}(x) \varphi^{\prime\prime}(x) \diff{x}\\
				&= -\int_\tau^1 \nabla K^\varepsilon_{\delta,\koo}(x) \varphi^{\prime}(x) \diff{x}\\
				&= \int_\tau^1 K^\varepsilon_{\delta,\koo}(x) \varphi(x) \diff{x}\\
				&= \langle \one_{[\tau,1)} K^\varepsilon_{\delta,\koo}, \varphi \rangle,
			\end{align*}
			where for the second line we used $\tilde{K}^\varepsilon_{\delta,\koo}(\tau) = 0 = \varphi^\prime(1)$ and the third line follows from $\nabla K^\varepsilon_{\delta,\koo}(\tau) = 0 = \varphi(1)$. Thus, $\one_{[\tau,1)} \tilde{K}^{\varepsilon}_{\delta,\koo}$ solves the Poisson equation $\Delta f = \one_{[\tau,1)} \Delta K^{\varepsilon}_{\delta,\koo}$ subject to the mixed Dirichlet--Neumann boundary conditions (note here that $f \coloneqq \one_{[\tau,1)} \tilde{K}^\varepsilon_{\delta,\koo} \in D(\Delta)$ since $f(0) = 0$ and $f^\prime(1) = 0$). From above, we know that the solution is unique, whence  $\Delta^{-1} (\one_{[\tau,1)} \Delta K^{\varepsilon}_{\delta,\koo}) = \one_{[\tau,1)}\tilde{K}^{\varepsilon}_{\delta,\koo}$.
			Thus, in case $\theta^\prime = \theta_-$, for any $0 < t \leq T$ we can calculate as follows for small $\varepsilon > 0$:
			\begin{align*}
				&\int_0^t \lVert \Delta_{\vartheta} S_{\vartheta}(s)K^{\varepsilon}_{\delta, \koo} - \theta_- S_\theta(s) \Delta K^{\varepsilon}_{\delta,\koo} \rVert^2 \diff{s}\\
				 &\quad= \int_0^t \lVert  S_\theta(s) (\Delta_\theta - \theta_- \Delta) K^{\varepsilon}_{\delta,\koo} \rVert^2 \diff{s} \\
				&\quad= \frac{1}{2}\langle (-\Delta_\theta)^{-1} (\mathrm{Id} - S_\theta(2t)) (\theta-\theta_-)\Delta K^{\varepsilon}_{\delta,\koo}, (\theta-\theta_-)\Delta K^{\varepsilon}_{\delta,\koo}\rangle \\
				&\quad\leq \frac{1}{2}\langle (-\Delta_\theta)^{-1} (\theta - \theta_-)\Delta K^{\varepsilon}_{\delta,\koo}, (\theta - \theta_-)\Delta K^{\varepsilon}_{\delta,\koo}\rangle \\
				&\quad= \frac{\eta^2}{2} \langle (-\Delta_\theta)^{-1} (\one_{[\tau,1)} \Delta K^{\varepsilon}_{\delta,\koo}), \one_{[\tau,1)}\Delta K^{\varepsilon}_{\delta,\koo}\rangle\\
				&\quad\leq \frac{\eta^2}{2\lowell} \langle (-\Delta)^{-1} (\one_{[\tau,1)} \Delta K^{\varepsilon}_{\delta,\koo}), \one_{[\tau,1)}\Delta K^{\varepsilon}_{\delta,\koo}\rangle= -\frac{\eta^2}{2\lowell} \langle \one_{[\tau,1)}\tilde{K}^{\varepsilon}_{\delta,\koo}, \one_{[\tau,1)}\Delta K^{\varepsilon}_{\delta,\koo}\rangle \\
				&\quad\leq \frac{\eta^2}{2\lowell}\lVert \nabla K^{\varepsilon}_{\delta,\koo}\rVert^2,
			\end{align*}
			where the last line follows from an integration by parts using $\nabla K^{\varepsilon}_{\delta,\koo}(\tau) = \nabla K^{\varepsilon}_{\delta,\koo}(1) = 0$.
			Due to $\lVert \nabla K_{\delta,\koo} - \nabla K^{\varepsilon}_{\delta,\koo} \rVert \to 0$ as $\varepsilon \to 0$,
			for
			\[\mathcal{I}_2^{\varepsilon}(\theta^\prime) \coloneqq  \int_0^T \int_0^t \lVert \Delta_{\vartheta} S_{\vartheta}(t-s)K^{\varepsilon}_{\delta, \koo} - \theta^\prime S_\theta(t-s) \Delta K^{\varepsilon}_{\delta,\koo} \rVert^2 \diff{s} \diff{t}, \]
			we now arrive at
			\[\lim_{\varepsilon \to 0}\mathcal{I}_2^{\varepsilon}(\theta_-)  \leq \frac{T}{2\lowell} \eta^2 \lVert \nabla K_{\delta,\koo} \rVert^2 =\frac{T}{2\lowell}\lVert K^\prime \rVert^2_{L^2} \eta^2 \delta^{-2}.\]
			For general $\theta^\prime \in [\lowell,\upell]$, we have
			\begin{align*}
				&\int_0^t \lVert \Delta_{\vartheta} S_{\vartheta}(s)K^{\varepsilon}_{\delta, \koo} - \theta^\prime S_\theta(s) \Delta K^{\varepsilon}_{\delta,\koo} \rVert^2 \diff{s}\\
				 &\quad\leq \frac{1}{2}\langle (-\Delta_\theta)^{-1} (\theta - \theta^\prime)\Delta K^{\varepsilon}_{\delta,\koo}, (\theta - \theta^\prime)\Delta K^{\varepsilon}_{\delta,\koo}\rangle \\
				&\quad= \frac{1}{2}\langle (-\Delta_\theta)^{-1} \Delta_\theta K^{\varepsilon}_{\delta,\koo}, \Delta_\theta K^{\varepsilon}_{\delta,\koo}\rangle -\theta^\prime \langle (-\Delta_\theta)^{-1} \Delta_\theta K^{\varepsilon}_{\delta,\koo}, \Delta K^{\varepsilon}_{\delta,\koo}\rangle\\
				&\quad \quad+ \frac{(\theta^\prime)^2}{2}\langle (-\Delta_\theta)^{-1} \Delta K^{\varepsilon}_{\delta,\koo}, \Delta K^{\varepsilon}_{\delta,\koo}\rangle\\
				&\quad\leq \frac{1}{2}\lVert \theta \nabla K^{\varepsilon}_{\delta, \koo} \rVert^2 + \frac{(\theta^\prime)^2}{2\lowell} \lVert \nabla K^{\varepsilon}_{\delta,\koo} \rVert^2\\
				&\quad\leq \frac{\upell^2(1 + \lowell^{-1})}{2} \lVert \nabla K^\varepsilon_{\delta,\koo} \rVert^2 \underset{\varepsilon \to 0}{\longrightarrow} \frac{\upell^2(1 + \lowell^{-1})}{2} \delta^{-2}.
			\end{align*}
			It remains to relate $\mathcal{I}_2^\varepsilon(\theta^\prime)$ to $\mathcal{I}_2$. Let
			\[\mathcal{I}_3^{\varepsilon} \coloneqq \int_0^T \lVert \Delta_{\theta} S_{\theta}(t)(K_{\delta,\koo} - K^{\varepsilon}_{\delta,\koo})\rVert^2 \diff{t} , \quad \mathcal{I}_4^{\varepsilon} \coloneqq \int_0^T \lVert S_{\theta}(t) \Delta (K_{\delta,\koo} - K_{\delta,\koo}^{\varepsilon})\rVert^2 \diff{t},\]
			and recall that $(\lambda_k,e_k)_{k \in \N}$ denotes an eigenbasis of $-\Delta_\theta$. It holds
			\begin{align*}
				\mathcal{I}_3^{\varepsilon} & \leq \frac{1}{2} \sum_{k \in \N} \lambda_k \langle K_{\delta,\koo} - K^{\varepsilon}_{\delta,\koo}, e_k \rangle^2 =\frac{1}{2}\lVert (-\Delta_\theta)^{1/2} (K_{\delta,\koo} - K^{\varepsilon}_{\delta,\koo})\rVert^2\\
				&\leq \frac{\upell^2}{2} \lVert \nabla K_{\delta,\koo} - \nabla K^{\varepsilon}_{\delta,\koo}\rVert^2 \underset{\varepsilon \to 0}{\longrightarrow} 0.
			\end{align*}
			Moreover, since
			\[K_{\delta,\koo} - K^{\varepsilon}_{\delta,\koo} = \delta^{-3} K^\prime((\tau-x_{\koo})/\delta) \varphi_\varepsilon,\]
			following the steps from the proof of Lemma \ref{lem:basic_est}.\ref{lem:basic_est2}, we obtain
			\[ \mathcal{I}_4^{\varepsilon} \lesssim_\delta \frac{T}{2\lowell} \lVert \nabla \varphi_\epsilon \rVert^2 = \frac{T}{2\lowell} \varepsilon \lVert \varphi^\prime \rVert^2_{L^2} \underset{\varepsilon \to 0}{\longrightarrow} 0.\]
			Since $\mathcal{I}_2(\theta^\prime) \leq 2 \mathcal{I}_2^\varepsilon(\theta^\prime) +  4 (1 \vee \upell^2)T(\mathcal{I}^{\varepsilon}_3 + \mathcal{I}^{\varepsilon}_4)$, it now follows from the above estimates by taking $\varepsilon \to 0$ that $\mathcal{I}_2(\theta_-) \leq \tfrac{T}{\lowell} \lVert K^\prime \rVert^2_{L^2} \eta^2 \delta^{-2}$ and, in general, $\mathcal{I}_2(\theta^\prime) \lesssim \delta^{-2}$.
			Combining these estimates with Lemma \ref{lem:basic_est}, the assertion follows from \eqref{eq:est1}.
			
			\item \label{err2} We now proceed with the variance bound. Let $Y = (Y(t))_{t \in [0,T]}$ be the Gaussian process defined by $Y(t) \coloneqq \int_0^t \langle \Delta_{\vartheta} S_{\vartheta}(t-u)K_{\delta,\koo} - \theta^\prime S_\theta(t-u) \Delta K_{\delta,\koo},\diff{W_u} \rangle$. Since $R_{\delta,\koo}(\theta^\prime) = \langle X^\Delta_{\delta,\koo}, Y \rangle_{L^2[0,T]}$ and both $X^\Delta_{\delta,\koo}$ and $Y$ are centered $L^2([0,T])$-valued jointly Gaussian processes we use Lemma \ref{lem:GaussCov} below and Lemma \ref{lem:basic_est} to obtain
			\begin{equation}\label{eq:var_rem1}
				\begin{split}
					\mathrm{Var}(R_{\delta,\koo}) &\leq \sqrt{\mathrm{Var}\big(\lVert X^\Delta_{\delta,\koo} \rVert_{L^2[0,T]}^2\big)} \sqrt{\mathrm{Var}\big(\lVert Y \rVert_{L^2([0,T])}^2\big)}\\
					 &= \sqrt{\mathrm{Var}(I_{\delta,\koo})} \sqrt{\mathrm{Var}\Big(\int_0^T Y(t)^2 \diff{t}\Big)}\\
					&\lesssim \delta^{-1} \sqrt{\mathrm{Var}\Big(\int_0^T Y(t)^2 \diff{t}\Big)},
				\end{split}
			\end{equation}
			By Wick's formula and It\^{o}-isometry, we have
			\begin{align*}
				\mathrm{Var}\Big(\int_0^T Y(t)^2 \diff{t}\Big) &= \int_0^T \int_0^T \mathrm{Cov}(Y(t),Y(s))^2 \diff{s} \diff{t}\\
				&= 2\int_0^T \int_0^T \Big(\int_0^{t\wedge s} \langle F_{t-u}(K_{\delta,\koo}), F_{s-u}(K_{\delta,\koo}) \rangle \diff{u}\Big)^2 \diff{s} \diff{t},
			\end{align*}
			where we denoted $F_r(g) \coloneqq \Delta_{\vartheta} S_{\vartheta}(r)g - \theta^\prime S_\theta(r) \Delta g$. As above, we first bound the $\varepsilon$-approximation
			\[\mathcal{I}^\varepsilon \coloneqq \int_0^T \int_0^T \Big(\int_0^{t\wedge s} \langle F_{t-u}(K^{\varepsilon}_{\delta,\koo}), F_{s-u}(K^{\varepsilon}_{\delta,\koo})\rangle \diff{u} \Big)^2\diff{s} \diff{t}.\]
			Since $K^{\varepsilon}_{\delta,\koo} \in D(\Delta_\theta)$ with $\Delta_\theta K^{\varepsilon}_{\delta,\koo} = \theta \Delta K^{\varepsilon}_{\delta,\koo}$, we can write
			\begin{align*}
				&\int_0^{t\wedge s} \langle F_{t-u}(K^\varepsilon_{\delta,\koo}), F_{s-u}(K^\varepsilon_{\delta,\koo}) \rangle \diff{u} \\
				&\quad= \int_0^{t \wedge s} \big\langle S_{\theta}(t-u) (\theta - \theta^\prime) \Delta K^{\varepsilon}_{\delta,\koo},  S_{\theta}(s-u) (\theta - \theta^\prime) \Delta K^{\varepsilon}_{\delta,\koo}\big\rangle \diff{u}\\
				&\quad= \int_0^{t \wedge s} \big\langle S_{\theta}(t+s-2u) (\theta - \theta_-) \Delta K^{\varepsilon}_{\delta,\koo}, (\theta - \theta^\prime) \Delta K^{\varepsilon}_{\delta,\koo}\big\rangle \diff{u} \\
				&\quad\leq \frac{1}{2} \big\langle (-\Delta_\theta)^{-1} S_\theta(\lvert t -s \rvert)(\theta - \theta^\prime) \Delta K^{\varepsilon}_{\delta,\koo}, (\theta - \theta^\prime) \Delta K^{\varepsilon}_{\delta,\koo}\big\rangle.
			\end{align*}
			Calculating as in Lemma \ref{lem:basic_est}.\ref{lem:basic_est4} and part \ref{err1}, we therefore obtain
			\begin{align*}
				\mathcal{I}^\varepsilon &\lesssim \int_0^T \int_0^T \big\langle (-\Delta_\theta)^{-1} S_\theta(\lvert t -s \rvert)(\theta - \theta^\prime) \Delta K^{\varepsilon}_{\delta,\koo}, (\theta - \theta^\prime) \Delta K^{\varepsilon}_{\delta,\koo}\big\rangle^2 \diff{s} \diff{t} \\
				&\leq \frac{T}{2} \big\lVert (-\Delta)_{\theta}^{-3/4}(\theta - \theta^\prime) \Delta K^{\varepsilon}_{\delta,\koo} \big\rVert^4 \\
				&\lesssim \big\lVert (-\Delta)_{\theta}^{-3/4}\Delta_\theta K^{\varepsilon}_{\delta,\koo} \big\rVert^4 + \big\lVert (-\Delta)^{-3/4}\Delta K^{\varepsilon}_{\delta,\koo} \big\rVert^4\\
				&= \big\lVert (-\Delta_\theta)^{1/4} K^{\varepsilon}_{\delta,\koo} \big\rVert^4 + \big\lVert (-\Delta_\theta)^{-3/4}\Delta K^{\varepsilon}_{\delta,\koo} \big\rVert^4.
			\end{align*}
			By Cauchy--Schwarz inequality,
			\begin{align*}
				\big\lVert (-\Delta_\theta)^{1/4} K^{\varepsilon}_{\delta,\koo} \big\rVert^4 &\leq \lVert (-\Delta_\theta)^{1/2} K^{\varepsilon}_{\delta,\koo} \rVert^2 \lVert K^{\varepsilon}_{\delta,\koo} \rVert^2\\ 
				&\leq \upell \lVert \nabla K^{\varepsilon}_{\delta,\koo} \rVert^2 \lVert K^{\varepsilon}_{\delta,\koo} \rVert^2 \underset{\varepsilon \to 0}{\longrightarrow} \lVert K^\prime \rVert^2_{L^2} \delta^{-2},
			\end{align*}
			and Lemma \ref{lem:sol_interpol} yields
			\begin{align*}
				\norm{(-\Delta_\theta)^{-3/4}\Delta K^{\varepsilon}_{\delta,\koo}} &\lesssim \lVert K^{\varepsilon}_{\delta,\koo} \rVert^{1/2} \lVert K^{\varepsilon}_{\delta,\koo} \rVert^{1/2}_{H^1((0,1))} + \lVert  K^{\varepsilon}_{\delta,\koo} \rVert_\infty\\
				&\underset{\varepsilon \to 0}{\longrightarrow} \lVert K_{\delta,\koo} \rVert^{1/2} \lVert K_{\delta,\koo} \rVert^{1/2}_{H^1((0,1))} + \lVert  K_{\delta,\koo} \rVert_\infty \sim \delta^{-1/2}.
			\end{align*}
			It follows that $\lim_{\varepsilon \to 0} \mathcal{I}^\varepsilon \lesssim \delta^{-2}$, and analogously to part \ref{err1}, we therefore obtain
			\begin{align*}
				\mathrm{Var}\big(\lVert Y \rVert^2_{L^2([0,T])}\big)&= \int_0^T \int_0^T \Big(\int_0^{t\wedge s} \langle F_{t-u}(K_{\delta,\koo}), F_{s-u}(K_{\delta,\koo}) \rangle \diff{u}\Big)^2 \diff{s} \diff{t} \\
				&\lesssim \lim_{\varepsilon \to 0} \mathcal{I}^\varepsilon \lesssim \delta^{-2}.
			\end{align*}
			Thus, \eqref{eq:var_rem1} implies $\mathrm{Var}(R_{\delta,\koo}(\theta^\prime)) \lesssim \delta^{-2}$.
			
			\item\label{err3} We have
			\begin{equation}\label{eq:rep_rem}
				R_{\delta,\koo}(\theta^\prime) = \int_0^T \int_0^t \langle S_{\theta}(t-s)\Delta K_{\delta,\koo}, \diff{W_s} \rangle \int_0^t \langle \Delta_\theta S_\theta(t-s) K_{\delta,\koo}, \diff{W_s} \rangle \diff{t} - \theta^\prime I_{\delta,\koo}.
			\end{equation}
			By now familiar calculations give
			\begin{align*}
				&\E\Big[\int_0^T \int_0^t \langle S_{\theta}(t-s)\Delta K_{\delta,\koo}, \diff{W_s} \rangle \int_0^t \langle \Delta_\theta S_\theta(t-s) K_{\delta,\koo}, \diff{W_s} \rangle \diff{t}\Big] \\
				&\quad =\int_0^T \int_0^t \langle S_{\theta}(t-s)\Delta K_{\delta,\koo}, \Delta_\theta S_{\theta}(t-s) K_{\delta,\koo} \rangle \diff{s} \diff{t}\\
				&\quad= \int_0^T \int_0^t \langle \Delta_\theta S_\theta(2s) K_{\delta,\koo} , \Delta K_{\delta,\koo} \rangle \diff{s} \diff{t}\\
				&\quad= \frac{1}{2} \sum_{k \in \N} \int_0^T (\mathrm{e}^{-2\lambda_k t} -1)\langle \Delta K_{\delta,\koo}, e_k\rangle \langle K_{\delta,\koo}, e_k\rangle \diff{t} \\
				&\quad= \frac{1}{2} \langle -\Delta K_{\delta,\koo}, K_{\delta,\koo} \rangle + \frac{1}{4}\langle (-\Delta_\theta)^{-1} (\operatorname{Id} - S_\theta(2T)) \Delta K_{\delta,\koo}, K_{\delta,\koo} \rangle \\
				&\quad= \frac{T}{2}\lVert K^\prime \rVert^2_{L^2}\delta^{-2} + \mathcal{O}(\delta^{-1/2}),
			\end{align*}
			where for the last line we used that by self-adjointness, Cauchy--Schwarz inequality and Lemma \ref{lem:sol_c2},
			\begin{align*}\lvert\langle (-\Delta_\theta)^{-1} (\operatorname{Id} - S_\theta(2T)) \Delta K_{\delta,\koo}, K_{\delta,\koo} \rangle \rvert&\leq \lVert (-\Delta_\theta)^{-1} \Delta K_{\delta,\koo} \rVert \lVert (\operatorname{Id} -S_\theta(2T)) K_{\delta,\koo} \rVert\\
				&\lesssim  \delta^{-1/2}.
			\end{align*}
			Since by Lemma \ref{lem:basic_est} it holds
			\[\frac{T}{2\upell} \lVert K^\prime \rVert^2_{L^2} \delta^{-2} + \mathcal{O}(\delta^{-1}) \leq \E[I_{\delta,\koo}] \leq \frac{T}{2\lowell} \lVert K^\prime \rVert^2_{L^2} \delta^{-2} + \mathcal{O}(\delta^{-1}),\]
			it therefore follows from \eqref{eq:rep_rem} that there exists $\theta^0_\circ \in [\lowell,\upell]$ such that
			\[\E[R_{\delta,\koo}(\theta^0_\circ)] = \mathcal{O}(\delta^{-1}).\]
		\end{enumerate}
	\end{proof}
	
	\begin{lemma}\label{lem:GaussCov}
		For two centred, jointly Gaussian random variables $X,Y$ with values in a real separable Hilbert space we have
		\[ \Var(\scapro{X}{Y})\le \Var(\norm{X}^2)^{1/2}\Var(\norm{Y}^2)^{1/2}.\]
	\end{lemma}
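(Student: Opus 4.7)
\textbf{Proof plan for Lemma \ref{lem:GaussCov}.} The plan is to pick an orthonormal basis $(e_k)_{k\in\N}$ of the ambient Hilbert space $H$ and expand $X_k\coloneqq \langle X,e_k\rangle$, $Y_k\coloneqq \langle Y,e_k\rangle$, so that $(X_k,Y_k)_{k\in\N}$ is a centered jointly Gaussian family with covariance blocks $A_{ij}\coloneqq \E[X_iX_j]$, $B_{ij}\coloneqq \E[Y_iY_j]$, $C_{ij}\coloneqq \E[X_iY_j]$. By Wick's/Isserlis' formula applied termwise (and Fubini/Parseval to justify summation), I would compute
\[\Var(\norm{X}^2)=2\sum_{i,j}A_{ij}^2=2\norm{A}_{\mathrm{HS}}^2,\qquad \Var(\norm{Y}^2)=2\norm{B}_{\mathrm{HS}}^2,\]
and
\[\Var(\langle X,Y\rangle)=\sum_{i,j}\bigl(A_{ij}B_{ij}+C_{ij}C_{ji}\bigr).\]

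Next, I would bound the two resulting sums separately. For the first, the Cauchy--Schwarz inequality on $\ell^2(\N\times\N)$ immediately gives $\sum_{i,j}A_{ij}B_{ij}\le \norm{A}_{\mathrm{HS}}\norm{B}_{\mathrm{HS}}$. For the second, Cauchy--Schwarz yields $\bigl|\sum_{i,j}C_{ij}C_{ji}\bigr|\le\norm{C}_{\mathrm{HS}}^2$, so it remains to estimate $\norm{C}_{\mathrm{HS}}^2$ by $\norm{A}_{\mathrm{HS}}\norm{B}_{\mathrm{HS}}$. This is the key step and follows from positive semidefiniteness of the joint covariance operator on $H\oplus H$,
\[\begin{pmatrix}A & C\\ C^\ast & B\end{pmatrix}\succeq 0,\]
which by the standard block-factorization admits a representation $C=A^{1/2}RB^{1/2}$ with a contraction $R$, $\norm{R}_{\mathrm{op}}\le 1$. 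Then
\[\norm{C}_{\mathrm{HS}}^2=\trace(A^{1/2}RBR^\ast A^{1/2})=\trace\bigl((RBR^\ast)A\bigr)\le \norm{RBR^\ast}_{\mathrm{HS}}\norm{A}_{\mathrm{HS}}\le \norm{R}_{\mathrm{op}}^2\norm{B}_{\mathrm{HS}}\norm{A}_{\mathrm{HS}},\]
using the trace Cauchy--Schwarz inequality and the contractivity of $R$.

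Putting these bounds together yields
\[\Var(\langle X,Y\rangle)\le 2\norm{A}_{\mathrm{HS}}\norm{B}_{\mathrm{HS}}=\Var(\norm{X}^2)^{1/2}\Var(\norm{Y}^2)^{1/2},\]
as claimed. The only delicate points are (i) a clean justification of the contractive factorization $C=A^{1/2}RB^{1/2}$ in the possibly infinite-dimensional setting (for which one can either invoke the standard PSD block-matrix lemma finite-dimensionally, apply it to finite truncations $A_n,B_n,C_n$, and pass to the limit using monotone convergence of the Hilbert--Schmidt sums, or argue directly via Douglas' range inclusion theorem), and (ii) verifying that all sums arising from Wick's formula converge absolutely, which follows from $\sum_kA_{kk}=\E\norm{X}^2<\infty$ and the analogous bound for $Y$, together with Cauchy--Schwarz. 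No other nontrivial obstacle arises.
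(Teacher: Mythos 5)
Your proof is correct and takes a genuinely different route from the paper. The paper decomposes $Y$ via the Gaussian conditional expectation, writing $Y = LX + Z$ with $LX = \E[Y\,|\,X]$ and $Z$ independent of $X$, introduces the symmetrisation $L_\sigma = \tfrac12(L+L^\ast)$, and works out $\Var(\scapro{X}{Y})$, $\Var(\norm{X}^2)$, $\Var(\norm{Y}^2)$ in terms of $Q_X$, $Q_Z$ and $L_\sigma$; the inequality then follows from Cauchy--Schwarz in the Hilbert--Schmidt inner product together with the operator inequality $L_\sigma^2 \le L^\ast L$. Your argument, by contrast, is fully symmetric in $X$ and $Y$: you expand in an orthonormal basis, apply Wick's formula to obtain $\Var(\scapro{X}{Y}) = \sum_{i,j}(A_{ij}B_{ij} + C_{ij}C_{ji})$, bound the first sum by $\norm{A}_{\mathrm{HS}}\norm{B}_{\mathrm{HS}}$ via Cauchy--Schwarz, and handle the second via $\norm{C}_{\mathrm{HS}}^2 \le \norm{A}_{\mathrm{HS}}\norm{B}_{\mathrm{HS}}$ using the contractive factorisation $C = A^{1/2}RB^{1/2}$ from positive semidefiniteness of the joint covariance block operator. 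Your approach buys a more transparent bookkeeping of exactly which Wick pairings contribute, and avoids introducing the auxiliary decomposition $Y = LX + Z$; the paper's approach, on the other hand, avoids the infinite-dimensional factorisation lemma you need (Douglas, or a truncation argument), deriving all the required operator bounds from the explicit conditional structure. Both proofs give the same sharp constant, and the integrability and summability issues you flag are indeed routine once one notes that Gaussian vectors in a Hilbert space have all moments finite.
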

	\begin{proof}
		Let us decompose $Y=LX+Z$ with a bounded linear operator $L$ such that $LX \coloneq\E[Y\,|\,X]$, $Z\coloneq Y-\E[Y\,|\,X]$ and $Z$ is independent of $X$. We denote by $Q_X$, $Q_Z$ the trace-class covariance operators of $X$ and $Z$. Then using an orthonormal eigensystem $(e_i,\lambda_i)$ of $Q_X$ we find
		\[ \Var(\norm{X}^2)=\sum_{i\ge 1} \Var(\scapro{X}{e_i}^2)=\sum_{i\ge 1}2\lambda_i^2=2\norm{Q_X}_{\text{HS}}^2.\]
		The same argument with orthonormal systems of $Q_Z$ and $Q_X^{1/2}L^\ast LQ_X^{1/2}$, respectively, yields
		\[ \Var(\norm{Z}^2)=2\norm{Q_Z}_{\text{HS}}^2,\quad \Var(\norm{LX}^2)=2\norm{Q_X^{1/2}L^\ast LQ_X^{1/2}}_{\text{HS}}^2.\]
		If $L_\sigma=\frac12(L+L^\ast)$ denotes the symmetrisation of $L$, this argument also yields
		\[ \Var(\scapro{X}{LX})=\Var(\scapro{X}{L_\sigma X})=2\norm{Q_X^{1/2}L_\sigma Q_X^{1/2}}_{\text{HS}}^2.\]
		Due to the independence of $X$ and $Z$, we may disintegrate to obtain
		\[ \Var(\scapro{X}{Z})=\E[\scapro{Q_ZX}{X}]=\E[\scapro{Q_Z}{XX^\ast}_{\text{HS}}]=\scapro{Q_X}{Q_Z}_{\text{HS}}\]
		as well as
		\[ \Var(\scapro{LX}{Z})=\Var(\scapro{L_\sigma X}{Z})=\E[\scapro{Q_Z}{L_\sigma XX^\ast L_\sigma}_{\text{HS}}]=\scapro{Q_X}{L_\sigma Q_ZL_\sigma}_{\text{HS}}.\]
		Using these identities, we arrive at
		\begin{align*}
			\Var(\scapro{X}{Y})&=\Var(\scapro{X}{LX}+\scapro{X}{Z})\\
			&=\Var(\scapro{X}{LX})+\Var(\scapro{X}{Z})+2\Cov(\scapro{X}{LX},\scapro{X}{Z}) \\
			&= 2\norm{Q_X^{1/2}L_\sigma Q_X^{1/2}}_{\text{HS}}^2+\scapro{Q_X}{Q_Z}_{\text{HS}}+0\\
			&=\scapro{Q_X}{2L_\sigma Q_XL_\sigma+Q_Z}_{\text{HS}},
		\end{align*}
		where the covariance vanishes since the product of the arguments is linear in $Z$, which is centered and independent of $X$. By the Cauchy--Schwarz inequality and the identity $\norm{T^\ast T}_{\text{HS}}=\norm{TT^\ast}_{\text{HS}}$ for $T=Q_X^{1/2}L_\sigma$, we obtain further
		\begin{align*}
			\Var(\scapro{X}{Y})^2& \le \norm{Q_X}_{\text{HS}}^2\Big(\norm{2L_\sigma Q_XL_\sigma}_{\text{HS}}^2+\norm{Q_Z}_{\text{HS}}^2+2\scapro{2L_\sigma Q_XL_\sigma}{Q_Z}_{\text{HS}}\Big)\\
			& =\tfrac12\Var(\norm{X}^2)\Big(4\norm{Q_X^{1/2}L_\sigma^2 Q_X^{1/2}}_{\text{HS}}^2+\norm{Q_Z}_{\text{HS}}^2+4\scapro{L_\sigma Q_XL_\sigma}{Q_Z}_{\text{HS}}\Big)\\
			& \le\Var(\norm{X}^2)\Big(\Var(\norm{LX}^2)+\tfrac14 \Var(\norm{Z}^2)+2\Var(\scapro{LX}{Z})\Big),
		\end{align*}
		where the last line follows  from the partial ordering $L_\sigma^2\eqcolon\Re(L)^2\le \abs{L}^2 \coloneq L^\ast L$ and
		from $\scapro{L_\sigma Q_XL_\sigma}{Q_Z}_{\text{HS}}=\scapro{ Q_X}{L_\sigma Q_ZL_\sigma}_{\text{HS}}$. Finally, note
		\[ \Var(\norm{Y}^2)=\Var(\norm{LX+Z}^2)=\Var(\norm{LX}^2)+\Var(\norm{Z}^2)+\Var(2\scapro{LX}{Z}),\]
		since all covariances between $\norm{LX}^2$, $\norm{Z}^2$ and $\scapro{LX}{Z}$ vanish due to independence or symmetry in $Z$ (use $Z\stackrel{\text{d}}{=}-Z$), such that the asserted inequality follows.
	\end{proof}
	
	We now provide the proof of the expectation result needed for the application of the consistency theorem for M-estimators.
	
	\begin{proof}[Proof of Lemma \ref{lem:approxZ}]
		It is enough to show
		\[\lim_{\delta \to 0}\sup_{\chi\in \Theta \times (0,1]} \Big\vert \delta^3 \E[I_{T,\delta}(\chi)] - \mathcal{Z}(\chi^\prime) \Big\vert = 0,\]	
		since $\E[I_{\delta,\koo}] \lesssim \delta^{-2}$.
		By Lemma \ref{lem:basic_est}, it holds
		\begin{align*}
			\delta^3 \E[I_{T,\delta}(\theta_-,\theta_+,h)] &= \frac{T}{4}\lVert K^\prime \rVert^2_{L^2} \delta\sum_{i=1}^{\delta^{-1}} \frac{(\theta_{\delta,i}(\lceil nh \rceil) - \theta^0_{\delta,i})^2}{\theta^0_{\delta,i}} + \mathcal{O}(\delta^2),
		\end{align*}
		and we always have
		\[\frac{\lvert \theta_{\delta,i}(k) - \theta^0_{\delta,i} \rvert^2}{\theta^0_{\delta,i}} \leq \frac{(\upell - \lowell)^2}{\lowell}, \quad i,k \in [n]\]
		It is thus easily verified that
		\begin{align*}
			&\Big\vert \delta^3 \E[I_{T,\delta}(\chi)] - \frac{T}{4}\lVert K^\prime \rVert^2_{L^2} \int_0^1 \frac{(\theta_{\chi^\prime}(x) - \theta^0_\delta(x))^2}{\theta^0_\delta(x)} \diff{x} \Big\vert \notag\\
			&\,\lesssim \frac{(\theta_- - \theta^0_-(\delta))^2}{\theta^0_-(\delta)} \big\lvert\delta(\lceil \tau^0/\delta \rceil \wedge \lceil h/\delta \rceil -1) - \tau^0 \wedge h \big\rvert + \frac{(\theta_+ - \theta^0_+(\delta))^2}{\theta^0_+(\delta)} \big\lvert\delta(\lceil \tau^0/\delta \rceil \vee \lceil h/\delta \rceil) - \tau^0 \vee h \big\rvert \notag\\
			&\quad + \big\lvert\delta(\lvert \lceil \tau^0/\delta \rceil - \lceil h/\delta \rceil\rvert -1)^+ - \lvert \tau^0 - h\rvert \big\rvert \frac{(\theta_+ - \theta_-^0(\delta))^2 + (\theta_- - \theta_+^0(\delta))^2}{\theta^0_+(\delta) \wedge \theta^0_-(\delta)} + \delta \frac{(\upell - \lowell)^2}{\lowell} + \delta^2\notag\\
			&\lesssim \delta \frac{(\upell-\lowell)^2}{\lowell}.\notag
		\end{align*}
		We therefore obtain the uniform convergence
		\begin{equation}\label{eq:uni_exp1}
			\lim_{\delta \to 0}\sup_{\chi \in \Theta \times (0,1]} \Big\vert \delta^3 \E[I_{T,\delta}(\theta_-,\theta_+,h)] - \frac{T}{4}\lVert K^\prime \rVert^2_{L^2} \int_0^1 \frac{(\theta_{\chi^\prime}(x) - \theta^0_\delta(x))^2}{\theta^0_\delta(x)} \diff{x} \Big\vert = 0.
		\end{equation}
		Moreover, for $\theta^\ast \coloneqq \theta_-^\ast \one_{(0,\tau^0)} + \theta_+^\ast \one_{[\tau^0,1)}$, we have
		\begin{equation*}\label{eq:disc_err2}
			\begin{split}
				&\frac{\lvert \theta^\ast_{\pm} (\theta_{\pm}-\theta^0_{\pm}(\delta))^2 - \theta^0_{\pm}(\delta)(\theta_\pm - \theta^\ast_{\pm})^2 \rvert}{\theta^0_{\pm}(\delta)\theta^\ast_{\pm}}\\
				&\quad\leq \frac{\theta_{\pm}^\ast \lvert (\theta_{\pm} - \theta_{\pm}^0(\delta))^2 - (\theta_{\pm} - \theta_{\pm}^\ast)^2 \rvert + (\theta_{\pm}-\theta^\ast_{\pm})^2 \lvert \theta_{\pm}^0(\delta) - \theta^\ast_{\pm}\rvert}{\lowell^2}\\
				&\quad\leq \frac{\upell + (\upell - \lowell)^2}{\lowell^2}\big((\theta_{\pm}^0(\delta) - \theta^\ast_\pm)^2 + \lvert \theta_{\pm}^0(\delta) - \theta^\ast_\pm \rvert \big) \underset{\delta \to 0}{\longrightarrow} 0,
			\end{split}
		\end{equation*}
		and similarly
		\begin{equation*}\label{eq:disc_err3}
			\begin{split}
				\frac{\lvert \theta^\ast_{\pm} (\theta_{\mp}-\theta^0_{\pm}(\delta))^2 - \theta^0_{\pm}(\delta)(\theta_\mp - \theta^\ast_{\pm})^2 \rvert}{\theta^0_{\pm}(\delta)\theta^\ast_{\pm}} \leq \frac{\upell + (\upell - \lowell)^2}{\lowell^2}\big((\theta_{\pm}^0(\delta) - \theta^\ast_\pm)^2 + \lvert \theta_{\pm}^0(\delta) - \theta^\ast_\pm \rvert \big)
			\end{split}
		\end{equation*}
	vanishes as $\delta\to0$.
		By the piecewiese constant nature of $\theta_{\chi^\prime},\theta^0_\delta,\theta^\ast$, it is therefore straightforward to show that
		\[
		\lim_{\delta \to 0}\sup_{\chi \in \Theta \times (0,1]} \Big\vert \int_0^1 \frac{(\theta_{\chi^\prime}(x) - \theta^0_\delta(x))^2}{\theta^0_\delta(x)} \diff{x} -   \int_0^1 \frac{(\theta_{\chi^\prime}(x) - \theta^\ast(x))^2}{\theta^\ast(x)} \diff{x} \Big\vert = 0.
		\]
		The claim then follows by using \eqref{eq:uni_exp1} and the triangle inequality.
	\end{proof}
	
	Finally, we give the proofs for the local fluctuation bounds on the centered empirical processes $\mathcal{Z}_\delta(\cdot) - \E[\mathcal{Z}_\delta(\cdot)]$ and $\mathcal{L}_\delta(\cdot) - \E[\mathcal{L}_\delta(\cdot)]$ around the true parameter $\chi^0(\delta)$.
	\begin{proof}[Proof of Lemma \ref{lem:aux}]
		Due to $\mathcal{Z}_\delta(\chi^0(\delta)) = 0$, the assertion is equivalent to the claim that
		\[\E\Big[\sup_{\tilde{d}_\delta(\chi,\chi^0(\delta)) < \varepsilon} \big\lvert \delta^3 (\mathcal{Z}_\delta(\chi) - \E[\mathcal{Z}_\delta(\chi)]) \big\rvert \Big] \lesssim  \delta^3 + \delta^{1/2}\varepsilon^3 + \delta\varepsilon^2 + \delta^{3/2} \varepsilon.\]
		Let $\varepsilon \leq 1$. With the notation from the proof of Theorem \ref{theo:cons}, we have the bound
		\begin{equation}\label{eq:decomp}
			\begin{split}
				\E\Big[\sup_{\tilde{d}_\delta(\chi,\chi^0(\delta)) < \varepsilon} \big\lvert  \delta^3(\mathcal{Z}_\delta(\chi) - \E[\mathcal{Z}_\delta(\chi)]) \big\rvert \Big] &\lesssim
				\E\Big[\sup_{\tilde{d}_\delta(\chi,\chi^0(\delta)) < \varepsilon}  \delta^3 \big\lvert  I_{T,\delta}(\chi) - \E[I_{T,\delta}(\chi)]  \big\rvert \Big] \\
				&\quad + \delta^3 \E\Big[\sup_{\tilde{d}_\delta(\chi,\chi^0(\delta)) < \varepsilon}  \big\lvert  M_{T,\delta}(\chi)  \big\rvert \Big] \\
				&\quad + \delta^2(\delta \wedge \varepsilon^2) \E[\lvert I_{\delta,\koo} - \E[I_{\delta,\koo}] \rvert]\\
				&\quad + \delta^{5/2}(\delta^{1/2} \wedge \varepsilon)\E[\lvert M_{\delta,\koo} \rvert]),
			\end{split}
		\end{equation}
		where for the last two summands we used that $\tilde{d}_\delta(\chi,\chi^0(\delta)) \leq \varepsilon$ implies
		\[\lvert \theta_{\delta,\koo}(\lceil h/\delta\rceil) - \theta^0_{\delta,\koo} \rvert^2 \lesssim 1 \wedge \frac{\varepsilon^2}{\delta}.\]
		We also observe that $\tilde{d}(\chi,\chi^0(\delta)) < \varepsilon$ implies $(\lvert  [\tau^0]_\delta - [h]_\delta  \rvert - \delta)^+ < \varepsilon^2$, giving
		\begin{equation}\label{eq:eps_size}
			\lvert \lceil \tau^0/\delta \rceil - \lceil h/\delta \rceil \rvert \leq  1+ \varepsilon^2 \delta^{-1}.
		\end{equation}
		Consequently, $\lvert \{\lceil h/\delta \rceil : (\lvert [\tau^0]_\delta - [h]_\delta \rvert - \delta)^+ < \varepsilon^2\} \rvert \lesssim 1 + \varepsilon^2\delta^{-1}$.  Moreover,  if $\varepsilon^2 < \delta/2$, then $\tilde{d}_\delta(\chi,\chi^0(\delta)) <\varepsilon$ implies $\lceil h/\delta\rceil = \lceil \tau^0/\delta \rceil \pm 1 = \koo \pm 1$, i.e.,
		\begin{equation}\label{eq:eps_size1}
			\lceil h/\delta \rceil \wedge \koo +1 < \lceil h/\delta \rceil \vee \koo \implies \delta \leq 2\varepsilon^2.
		\end{equation}
		Thus, for the first term, using \eqref{eq:err5} and Proposition \ref{prop:conc}, it follows with a union bound that, for any $z > 0$,
		\begin{align*}
			&\PP\Big(\delta^3 \sup_{\tilde{d}_\delta(\chi,\chi^0(\delta)) < \varepsilon}  \big\lvert  I_{T,\delta}(\chi) - \E[I_{T,\delta}(\chi)]  \big\rvert \geq z \Big)\\
			&\,\leq \PP\Big(\delta^3 \varepsilon^2 \sup_{\tilde{d}_\delta(\chi,\chi^0(\delta)) < \varepsilon}  \Big\lvert  \sum_{i=1,i\neq\koo}^{\lceil h/\delta \rceil \wedge \koo -1} (I_{\delta,i} - \E(I_{\delta,i}))  \Big\rvert \geq z/4 \Big)\\
			&\,\quad + \PP\Big(\delta^3 \varepsilon^2 \sup_{\tilde{d}_\delta(\chi,\chi^0(\delta)) < \varepsilon}  \Big\lvert  \sum_{i=\lceil h/\delta \rceil \vee \koo +1 ,i\neq \koo}^{n} (I_{\delta,i} - \E(I_{\delta,i}))  \Big\rvert \geq z4 \Big) \\
			&\,\quad + \PP\Big(\delta^3 ((\upell - \lowell)^2 + 2 \varepsilon^2) \sup_{\tilde{d}_\delta(\chi,\chi^0(\delta)) < \varepsilon}  \Big\lvert  \sum_{i=\lceil h/\delta \rceil \wedge \koo + 1,i\neq \koo}^{\lceil h/\delta \rceil \vee \koo - 1} (I_{\delta,i} - \E(I_{\delta,i}))  \Big\rvert \one_{\big\{\frac{\lceil h/\delta \rceil \wedge \koo +1}{\lceil h/\delta \rceil \vee \koo}< 1  \big\}} \geq z/4 \Big)\one_{\{\delta \leq 2\varepsilon^2\}}\\
			&\,\quad + \PP\Big(\delta^2(\delta \wedge \varepsilon^2) \sup_{\tilde{d}(\chi,\chi^0(\delta) < \varepsilon }\lvert I_{\delta,\lceil h/\delta \rceil} - \E[I_{\delta,\lceil h/\delta \rceil}] \rvert \geq z/4\Big)\\
			&\, \lesssim (1 + \varepsilon^2 \delta^{-1}) \Big(\exp\Big(-\frac{\delta^{-3} \varepsilon^{-4} z^2}{C(1 + \varepsilon^{-2}z)} \Big) + \exp\Big(-\frac{\delta^{-3} z^2}{C(\varepsilon^2 +   z)} \Big) \one_{\{\delta \leq 2\varepsilon^2\}}\\
			&\qquad\qquad\qquad+ \exp\Big(-\frac{\delta^{-2}(\delta^{-1}\vee \varepsilon^{-2})z^2}{C(\delta \wedge \varepsilon^2 + z)} \Big)\Big)\\
			&\,\lesssim (1+ \varepsilon^2 \delta^{-1}) \exp\Big(-\frac{\delta^{-3} z^2}{C(\varepsilon^2 +   z)} \Big).
		\end{align*}
		Since
		\begin{align*}
			&(1+ \varepsilon^2 \delta^{-1}) \int_0^\infty \exp\Big(-\frac{\delta^{-3} z^2}{C( \varepsilon^2 + z)} \Big) \diff{z}\\
			&\,\leq (1+ \varepsilon^2 \delta^{-1}) \int_0^\infty \Big(\exp\Big(-\frac{\delta^{-3} \varepsilon^{-2} z^2}{2C}\Big) + \exp\Big(-\frac{\delta^{-3} z}{2C} \Big) \Big) \diff{z} \lesssim \delta^{3/2} \varepsilon + \delta^{1/2} \varepsilon^3 + \delta^3,
		\end{align*}
		we obtain for $\varepsilon \leq 1$,
		\begin{equation}\label{eq:err_qv}
			\begin{split}
				&\E\Big[\sup_{\tilde{d}_\delta(\chi,\chi^0(\delta)) < \varepsilon}  \delta^3 \big\lvert  I_{T,\delta}(\chi) - \E[I_{T,\delta}(\chi)]  \big\rvert \Big] \\
				&\quad= \int_0^\infty \PP\Big(\delta^3 \sup_{\tilde{d}_\delta(\chi,\chi^0(\delta)) < \varepsilon}  \big\lvert  I_{T,\delta}(\chi) - \E[I_{T,\delta}(\chi)]  \big\rvert \geq z \Big) \diff{z}\\
				&\quad\lesssim \delta^{3/2} \varepsilon + \delta^{1/2} \varepsilon^3 + \delta^3.
			\end{split}
		\end{equation}
		We now treat the second summand on the right hand side of \eqref{eq:decomp}.
		We first observe that, similarly to \eqref{eq:err5}, we may write for any $(\theta_-,\theta_+,\theta_\circ,h) \in \{\chi \in \Theta \times (0,1]: \tilde{d}_\delta(\chi,\chi^0(\delta)) < \varepsilon\}$, with $k = \lceil h/\delta \rceil$,
		\[\lvert M_{T,\delta}(\chi) \rvert \lesssim \varepsilon \Big \lvert \sum_{i \in \Lambda_1(k)} M_{\delta,i} \Big \rvert + \varepsilon \Big\lvert \sum_{i \in \Lambda_2(k)} M_{\delta,i} \Big\rvert + ((\upell - \lowell) +2 \varepsilon) \Big\lvert \sum_{i \in \Lambda_3(k)} M_{\delta,i} \Big\rvert + \big(1 \wedge \frac{\varepsilon}{\sqrt{\delta}}\big) \lvert M_{\delta,k} \rvert  ,\]
		where
		\begin{align*}
			\Lambda_1(k) &\coloneqq  \big\{i \in [\delta^{-1}]:  i \neq \koo, i \leq k \wedge \koo -1\big\}, \quad \Lambda_2(k) \coloneqq \big\{i \in [\delta^{-1}]:  i \neq \koo, i \geq (k \vee \koo) +1\big\} \\
			\Lambda_3(k) &\coloneqq \big\{i \in [\delta^{-1}]:  i \neq \koo, (k\wedge \koo) +1 \leq  i \leq k \vee \koo -1\big\}.
		\end{align*}
		Using \eqref{eq:eps_size} and \eqref{eq:eps_size1}, we therefore obtain,
		\begin{equation}\label{eq:decomp_mart}
			\begin{split}
				\E\Big[\delta^3 \sup_{\tilde{d}_\delta(\chi,\chi^0(\delta)) < \varepsilon}  \big\lvert  M_{T,\delta}(\chi)  \big\rvert \Big] &\lesssim \sum_{l=1}^2 \sum_{k \in \Lambda(\varepsilon)} \E\Big[\varepsilon \delta^3 \Big\lvert \sum_{i \in \Lambda_l(k)} M_{\delta,i} \Big\rvert \Big] \\
				& \quad + \sum_{k \in \Lambda(\varepsilon)} \delta^3\E\Big[ \Big\lvert \sum_{i \in \Lambda_3(k)} M_{\delta,i} \Big\rvert \Big] \one_{\{\delta \leq 2\varepsilon^2\}}\\
				&\quad + (\delta^3 \wedge \varepsilon\delta^{5/2})\sum_{k \in \Lambda(\varepsilon)}\E\big[\lvert M_{\delta,k}\rvert\big],
			\end{split}
		\end{equation}
		where $\Lambda(\varepsilon) \coloneqq \{k \in [\delta^{-1}] : (\lvert \koo - k \rvert -1)^+ \leq \varepsilon^2 \delta^{-1}\}$, whose size is bounded by a multiple of $1 + \varepsilon^2 \delta^{-1}$.
		For any $k,l$, $\mathcal{M}_{k,l}(t) \coloneqq \sum_{i \in \Lambda_l(k)} \int_0^t X^\Delta_{\delta,i}(s) \diff{B_{\delta,i}(s)}$ is a martingale in $t$, and, by independence of $(B_{\delta,i})_{i \in [\delta^{-1}]}$, its quadratic variation is given by
		\[\big\langle \mathcal{M}_{k,l} \big\rangle_t = \sum_{i \in \Lambda_l(k)} \int_0^t X^{\Delta}_{\delta,i}(s)^2 \diff{s}, \quad k \in [\delta^{-1}], l \in \{1,2,3\}, t \geq 0.\]
		In particular, $\mathcal{M}_{k,l}(T) = \sum_{i \in \Lambda_l(k)} M_{\delta,i}$ and $\langle \mathcal{M}_{k,l} \rangle_T = \sum_{i \in \Lambda_l(k)} I_{\delta,i}$. Hence, using Cauchy--Schwarz inequality, Lemma \ref{lem:basic_est} and the fact that for $\delta \leq 2\varepsilon^2$ and $k \in \Lambda(\varepsilon)$, $\lvert \Lambda_3(k) \rvert \lesssim \varepsilon^2 \delta^{-1}$, we obtain from \eqref{eq:decomp_mart}
		\begin{equation} \label{eq:mart_max}
			\begin{split}
				&\E\Big[\delta^3 \sup_{\tilde{d}_\delta(\chi,\chi^0(\delta)) < \varepsilon}  \big\lvert  M_{T,\delta}(\chi)  \big\rvert\Big] \\
				&\lesssim \delta^3(1+ \varepsilon^2\delta^{-1}) \varepsilon \Big(\sum_{i \in [\delta^{-1}]}\E[I_{\delta,i}] \Big)^{1/2} + \one_{\{\delta \leq 2\varepsilon^2\}}\delta^3 \sum_{k \in \Lambda(\varepsilon)} \Big(\sum_{i \in \Lambda_3(k)}\E[I_{\delta,i}] \Big)^{1/2}\\
				&\quad + (\delta^3 \wedge \varepsilon\delta^{5/2})(1+ \varepsilon^2 \delta^{-1}) \max_{k\in [\delta^{-1}] \setminus \{\koo\}}\E[ I_{\delta,k}]^{1/2}\\
				&\lesssim \delta^3\varepsilon(1+ \varepsilon^2\delta^{-1}) \delta^{-3/2} + \delta^3 \varepsilon^2 \delta^{-1} (\varepsilon^2 \delta^{-3})^{1/2} + \varepsilon\delta^{3/2}(1+ \varepsilon^2 \delta^{-1})\lesssim \delta^{3/2} \varepsilon + \varepsilon^3 \delta^{1/2}.
			\end{split}
		\end{equation}
		Finally, using Lemma \ref{lem:basic_est}, It\^{o} isometry and Cauchy--Schwarz inequality,
		\begin{equation}\label{eq:cp_bound}
			\begin{split}
				&\delta^2(\delta \wedge \varepsilon^2)\E[\lvert I_{\delta,\koo} - \E[I_{\delta,\koo}] \rvert] + \delta^{5/2}(\delta^{1/2} \wedge \varepsilon)\E[\lvert M_{\delta,\koo} \rvert]\\
				&\,\leq \delta^2(\delta \wedge \varepsilon^2)(\mathrm{Var}(I_{\delta,\koo}))^{1/2} + \delta^{5/2}(\delta^{1/2} \wedge \varepsilon)\E[I_{\delta,\koo}]^{1/2}\\
				&\lesssim \delta \varepsilon^2 + \delta^{3/2} \varepsilon.
			\end{split}
		\end{equation}
		Thus, inserting \eqref{eq:err_qv}, \eqref{eq:mart_max} and \eqref{eq:cp_bound} into \eqref{eq:decomp}, the assertion follows.
	\end{proof}

	\begin{proof}[Proof of Corollary \ref{coro:aux}]
		Using Proposition \ref{lem:remainder} and Lemma \ref{lem:aux}, it holds
		\begin{align*}
			&\E\Big[\sup_{\tilde{d}_\delta(\chi,\chi^0(\delta)) < \varepsilon} \big\lvert (\mathcal{L}_\delta - \tilde{\mathcal{L}}_\delta)(\chi) - ( \mathcal{L}_\delta - \tilde{\mathcal{L}}_{\delta})(\chi^0(\delta)) \big\rvert \Big] \\
			&\, \lesssim \E\Big[\sup_{\tilde{d}_\delta(\chi,\chi^0(\delta)) < \varepsilon} \big\lvert (\mathcal{L}_\delta - \E[\mathcal{L}_\delta])(\chi) - (\mathcal{L}_\delta - \E[\mathcal{L}_\delta])(\chi^0(\delta)) \big\rvert \Big] \\
			&\,\quad  +  \sup_{\tilde{d}_\delta(\chi,\chi^0(\delta)) < \varepsilon} \lvert \theta_{\delta,\koo}(\lceil h/\delta\rceil) - \theta^0_\circ(\delta) \rvert \delta^3\lvert \E[R_{\delta,\koo}(\theta^0_\circ(\delta))]\rvert\\
			&\, \lesssim \E\Big[\sup_{\tilde{d}_\delta(\chi,\chi^0(\delta)) < \varepsilon} \big\lvert (\mathcal{Z}_\delta - \E[\mathcal{Z}_\delta])(\chi) - (\mathcal{Z}_\delta - \E[\mathcal{Z}_\delta])(\chi^0(\delta)) \big\rvert \Big] \\
			&\,\quad  + \sup_{\tilde{d}_\delta(\chi,\chi^0(\delta)) < \varepsilon} \lvert \theta_{\delta,\koo}(\lceil h/\delta\rceil) - \theta^0_\circ(\delta) \rvert \delta^3 (\operatorname{Var}(R_{\delta,\koo}(\theta^0_\circ(\delta))))^{1/2}\\
			&\quad + \sup_{\tilde{d}_\delta(\chi,\chi^0(\delta)) < \varepsilon} \lvert \theta_{\delta,\koo}(\lceil h/\delta\rceil) - \theta^0_\circ(\delta) \rvert \delta^3\lvert \E[R_{\delta,\koo}(\theta^0_\circ(\delta))]\rvert\\
			&\, \lesssim  \tilde{\psi}_\delta(\varepsilon) + \delta^2 \sup_{\tilde{d}_\delta(\chi,\chi^0(\delta)) < \varepsilon} \lvert \theta_{\delta,\koo}(\lceil h/\delta\rceil) - \theta_\circ^0(\delta) \rvert \\
			&\,\lesssim  \delta^3 + \delta^{1/2} \varepsilon^\gamma + \delta \varepsilon^{\varrho} + \delta^{3/2}\varepsilon.
		\end{align*}
		For the last inequality, we used that  $\tilde{d}_\delta(\chi,\chi^0(\delta)) < \varepsilon$ implies
		\[\lvert \theta_{\delta,\koo}(\lceil h/\delta \rceil) - \theta^0_{\delta,\koo} \rvert = \lvert \theta_{\delta,\koo}(\lceil h/\delta \rceil) - \theta^0_\circ(\delta) \rvert \leq \varepsilon/\sqrt{\delta}.\]
	\end{proof}
	
	\section{Remaining proofs of Section \ref{sec:clt}}\label{app:proof_clt}
	We start with verifying the representation of the estimator $\hat k=\hat k(\delta)$ defining the change point estimator via the relation $\hat\tau=\hat k\delta$.
	
	\begin{proof}[Proof of Lemma \ref{lem:cp}]
		Write $\theta_\pm = \theta_\pm(\delta)$.
		First subtracting
		\[\sum_{i=1}^n  \Big(\theta_+ \int_0^T X^{\Delta}_{\delta,i}(t) \diff{X_{\delta,i}(t)} - \frac{\theta_+^2}{2} \int_0^T X^{\Delta}_{\delta,i}(t)^2 \diff{t}\Big)\]
		from the maximum in the definition of $\hat{k}$ in \eqref{eq:cpest} and then adding
		\[
		\sum_{i=1}^{\ko} \Big(\eta \int_0^T X^{\Delta}_{\delta,i}(t) \diff{X_{\delta,i}(t)} + \frac{\theta_-^2 -\theta_+^2}{2} \int_0^T X^{\Delta}_{\delta,i}(t)^2 \diff{t}\Big),
		\]
		it follows that $\hat{k} = \argmaxalt_{k=1,\ldots,\delta^{-1}} \tilde{Z}_k$, where
		\[
		\tilde{Z}_k =
		\begin{cases} 0, &k=\ko,\\
			-\eta \sum_{i=\ko+1}^{k} \int_0^T X^{\Delta}_{\delta,i} \diff{X_{\delta,i}(t)} - \frac{\theta_-^2 - \theta_+^2}{2} \sum_{i=\ko+1}^k\int_0^T X^{\Delta}_{\delta,i}(t)^2 \diff{t}, & k > \ko,\\
			\eta \sum_{i=k + 1}^{\ko} \int_0^T X^{\Delta}_{\delta,i} \diff{X_{\delta,i}(t)} + \frac{\theta_-^2 - \theta_+^2}{2} \sum_{i=k+1}^{\ko}\int_0^T X^{\Delta}_{\delta,i}(t)^2 \diff{t}, & k < \ko.
		\end{cases}
		\]
		Using Proposition \ref{prop:weaksol} and  $\eta\theta_--\frac{\theta_+^2-\theta_-^2}{2}=-\eta^2/2$, one obtains that, for $k < \ko$,
		\begin{align*}
			\tilde{Z}_k &= \one_{\{k\neq \ko-1\}}\Big(\eta\sum_{i=k +1}^{\ko-1} \int_0^T X_{\delta,i}^\Delta(t) \diff{B_{\delta,i}(t)} - \frac{\eta^2}{2}\sum_{i=k+1}^{\ko-1}  \int_0^T X_{\delta,i}^\Delta(t)^2 \diff{t}\Big)\\
			&\quad+ \eta \int_0^T X_{\delta,\ko}^\Delta(t)\diff{X_{\delta,\ko}(t)} + \frac{\theta_-^2 - \theta_+^2}{2} \int_0^T X_{\delta,\ko}^\Delta(t)^2\diff{t}\\
			&\,= \eta\sum_{i=k+1}^{\ko} \int_0^T X_{\delta,i}^\Delta(t)\diff{B_{\delta,i}(t)} - \frac{\eta^2}{2}\sum_{i=k+1}^{\ko}  \int_0^T X_{\delta,i}^\Delta(t)^2 \diff{t}\\
			&\quad + \eta\int_0^T X^{\Delta}_{\delta,\ko}(t) \Big(\int_0^t \langle \Delta_{\vartheta} S_{\vartheta}(t-s)K_{\delta,\ko},\diff{W_s} \rangle - \vartheta_-(\delta) X^\Delta_{\delta,\ko}(t)\Big) \diff{t}\\
			&= Z_k.
		\end{align*}
		Similarly, Proposition \ref{prop:weaksol} together with $-\eta\theta_++\frac{\theta_+^2-\theta_-^2}{2}=-\eta^2/2$ yields $\tilde{Z}_k = Z_k$ for $k>\ko$ as well.
	\end{proof}
	
	Finally, we give the proof for tightness, which involves most of the previous technical considerations in the paper.
	\begin{proof}[Proof of Proposition \ref{prop:tight}]
		We will verify that  $\hat{\tau}- \tau ={\mathcal O}_{\PP}(v_\delta)={\mathcal O}_{\PP}(\delta^3\eta^{-2})$.
		As was shown in Section \ref{sec:clt} (cf.~\eqref{eq:min}), we have
		\begin{align*}
			v_\delta^{-1}(\hat\tau-\tau) &\in \argmin_{h \in \intv}\Big\{\eta M_{T,\delta}^\tau(h)+\frac{\eta^2}{2} I_{T,\delta}^\tau(h) + {\mathcal O}_{\PP}(\eta^2\delta^{-2})\one_{\{h < 0\}}\Big\},
		\end{align*}
		where the ${\mathcal O}_{\PP}$-term is independent of $h$ and comes from the expectation bounds from Lemma \ref{lem:basic_est} and Proposition \ref{lem:remainder} on the quantities $I_{\delta,\ceil{\tau/\delta}}, R_{\ceil{\tau/\delta}}$ associated to the observation block around the change point $\tau$.
		Since $\eta = o(\delta)$, it therefore suffices to show that
		\[
		\forall\eps>0\, \exists R_\eps>0:\; \PP\Big(\inf_{\abs{h}>R_\eps, h \in \intv} \Big\{\eta M_{T,\delta}^\tau(h)+\frac {\eta^2}{2} I_{T,\delta}^\tau(h)\Big\} \leq M_{T,\delta}^\tau(0)+\frac \eta2 I_{T,\delta}^\tau(0) +1\Big)\le\eps,
		\]
		with $R_\eps$ only depending on $\eps$, not on $\delta,\eta$.
		To see this, note that
		\begin{align*}
			&\PP\Big(\inf_{\abs{h}>R_\eps, h \in \intv} \Big\{\eta M_{T,\delta}^\tau(h)+\frac {\eta^2}{2} I_{T,\delta}^\tau(h) + \mathcal{O}_{\PP}(\eta^2 \delta^{-2}) \one_{\{h < 0\}}\Big\}\leq M_{T,\delta}^\tau(0)+\frac \eta2 I_{T,\delta}^\tau(0)\Big)\\
			&\,\leq \PP\Big(\inf_{\abs{h}>R_\eps, h \in \intv} \Big\{\eta M_{T,\delta}^\tau(h)+\frac {\eta^2}{2} I_{T,\delta}^\tau(h)\Big\} \leq M_{T,\delta}^\tau(0)+\frac \eta2 I_{T,\delta}^\tau(0) +1\Big)\\
			&\,\quad+ \PP(o_{\PP}(1) < -1) \one_{\{\tau \delta^{-3} \eta^2 > R_{\varepsilon} \}},
		\end{align*}
		and the second term converges to $0$ as $R_{\varepsilon} \to \infty$.
		Since $M_{T,\delta}^\tau(0) = I_{T,\delta}^\tau(0) = 0$, the required statement will follow from
		\[
		\forall\eps>0\,\exists R_\eps>0:\; \PP\Big(\sup_{a^\delta_\pm(h)\geq \pm h>R_\eps} \Big\{-\eta M_{T,\delta}^\tau(h)-\frac{\eta^2}{2} I_{T,\delta}^\tau(h)\Big\}\geq -1\Big)\le\frac\eps2,
		\]
		where we set $a^\delta_+(h)= (1-\tau)/v_\delta$ and  $a^\delta_-(h)= \tau/v_\delta$ and, moreover, use the convention $\sup \varnothing = -\infty$.
		We only consider the case $h > 0$, the case $h < 0$ is similar.
		
		Let $R_{\varepsilon}$ be large enough to ensure that $R_{\eps}v_\delta/\delta > 1$ for any $\delta \in 1/\N$.
		Inserting the definitions \eqref{eq:Mtau} and \eqref{eq:Itau} yields
		\begin{equation} \label{eq:tight0}
			\sup_{a^\delta_+(h) \geq h>R_\eps} \Big\{-\eta M_{T,\delta}^\tau(h)-\frac{\eta^2}{2} I_{T,\delta}^\tau(h)\Big\} = \max_{k=\ceil{(\tau+R_\eps v_\delta)/\delta},\ldots,\delta^{-1}} \sum_{i=\ceil{\tau/\delta}+1}^k\Big( -\eta M_{\delta,i}-\frac{\eta^2}{2} I_{\delta,i}\Big).
		\end{equation}
		Here and in the following, for a vector $b_\delta = (b_\delta(k))_{k = 1,\ldots, \delta ^{-1}}$,  we set $\max_{k = K, \ldots, \delta^{-1}} b_\delta(k) \coloneqq -\infty$ if $K > \delta^{-1}$.
		
		Let the independent coupled random variables $(\bar{M}_{\delta,i})_{i=1,\ldots,\delta^{-1}}$ be given as in  \eqref{eq:coupling} and note that, by Proposition \ref{prop:coupling}, $(\sum_{i= \ko+1}^{k} \bar{M}_{\delta,i})_{k=1,\ldots,\delta^{-1} - \ko}$ is a martingale.
		For $\alpha > 0$, introduce $\bar{N}_{\delta,\alpha}$ given by
		\[\bar N_{\delta,\alpha}(k) \coloneqq \exp\Big(\sum_{i=\ceil{\tau/\delta}+1}^{\ceil{\tau/\delta}+k} \Big(-\alpha \bar M_{\delta,i}-\frac{\alpha^2}{2}\bar I_{\delta,i}\Big)\Big),\quad k \in \{1,\ldots,\delta^{-1}-\ko\},\]
		which is again a martingale since $\overbar{M}_{\delta,i} \sim N(0, \overbar{I}_{\delta,i})$.
		Note that $\bar{N}_{\delta,\eta/2}$ is positive and has constant expectation $1$.
		Thus, for any $\kappa > 0$, Doob's maximal martingale inequality \cite[Proposition II.1.5]{revyor99} yields, for any $k^{\circ} \in \N$,
		\[
		\PP\Big(\max_{k=k^{\circ},\ldots,\delta^{-1}-\ceil{\tau/\delta}} \bar N_{\delta,\eta/2}(k)>\exp\Big(\frac{\eta^2}{8} \sum_{i=\ceil{\tau/\delta}+1}^{\ceil{\tau/\delta}+k^\circ}\bar I_{\delta,i}-\frac{\kappa}{2}\Big)\Big)
		\le \exp\Big(\frac{\kappa}2-\frac{\eta^2}{8} \sum_{i=\ceil{\tau/\delta}+1}^{\ceil{\tau/\delta}+k^\circ}\bar I_{\delta,i}\Big).\]
		Let  $k^{\circ} \coloneqq \ceil{R_\eps\delta^2/\eta^2} -1 \geq 1$.
		It holds
		\begin{align*}
			&\Big\{\max_{k=\ceil{(\tau+R_\eps v_\delta)/\delta},\ldots,\delta^{-1}} \sum_{i=\ceil{\tau/\delta}+1}^k\Big( -\eta\bar M_{\delta,i}-\frac{\eta^2}{2} \bar I_{\delta,i}\Big)>-\kappa\Big\} \\
			&\quad \subset\Big\{\max_{k=k^{\circ},\ldots,\delta^{-1}-\ceil{\tau/\delta}} \bar N_{\delta,\eta/2}(k)>\exp\Big(\frac{\eta^2}{8} \sum_{i=\ceil{\tau/\delta}+1}^{\ceil{\tau/\delta}+k^\circ} \bar I_{\delta,i}-\frac{\kappa}{2}\Big)\Big\}
		\end{align*}
		and we know from \eqref{eq:exp_quad} that
		\[
		\eta^2\sum_{i=\ceil{\tau/\delta}+1}^{\ceil{\tau/\delta}+k^{\circ}} \bar I_{\delta,i}\ge \frac {T}{2\upell} \eta^2\delta^{-2}k^{\circ}\norm{K'}^2+{\mathcal O} (1).
		\]
		Using $\eta^2\delta^{-2}k^{\circ}\thicksim R_\eps\longrightarrow\infty$, we thus conclude that, for any fixed $\kappa>0$ and $\varepsilon > 0$ choosing $R_{\varepsilon} = R_{\varepsilon}(\kappa)$ large enough,
		\begin{equation}\label{eq:tight1}
			\PP\Big(\max_{k=\ceil{(\tau+R_\eps v_\delta)/\delta},\ldots,\delta^{-1}} \sum_{i=\ceil{\tau/\delta}+1}^k\Big( -\eta \bar M_{\delta,i}-\frac{\eta^2}{2} \bar I_{\delta,i}\Big)>-\kappa -1\Big) \leq \frac{\varepsilon}{4}.
		\end{equation}
		We proceed to studying the difference
		$\sum_{i=\ceil{\tau/\delta}+1}^k(\bar M_{\delta,i} - M_{\delta,i} +\frac\eta2( \bar I_{\delta,i}-I_{\delta,i}))$.
		Proposition \ref{prop:coupling} gives, for any $z,L > 0$,
		\[
		\PP\Big( \sum_{i=\ceil{\tau/\delta}+1}^{\ceil{\tau/\delta}+k}(M_{\delta,i}-\bar M_{\delta,i})\ge z,\, \sum_{i=\ceil{\tau/\delta}+1}^{\ceil{\tau/\delta}+k}\abs{I_{\delta,i}-\bar I_{\delta,i}}\leq L\Big) \le \e^{-z^2/(2L)}.
		\]
		For $1 \leq k_1\le k_2\le \delta^{-1} -\ceil{\tau/\delta}$, we deduce
		\begin{align*}
			&\PP\Big( \max_{k=k_1,\ldots,k_2} \sum_{i=\ceil{\tau/\delta}+1}^{\ceil{\tau/\delta}+k}(M_{\delta,i}-\bar M_{\delta,i})\ge z\Big)\\
			&\quad\le \PP\Big( \sum_{i=\ceil{\tau/\delta}+1}^{\ceil{\tau/\delta}+k_2}\abs{I_{\delta,i}-\bar I_{\delta,i}}> L\Big)\\
			&\quad\quad+ \sum_{k=k_1}^{k_2} \PP\Big( \sum_{i=\ceil{\tau/\delta}+1}^{\ceil{\tau/\delta} + k}(M_{\delta,i}-\bar M_{\delta,i})\ge z,\,
			\sum_{i=\ceil{\tau/\delta}+1}^{\ceil{\tau/\delta}+k}\abs{I_{\delta,i}-\bar I_{\delta,i}}\le L\Big)\\
			&\quad\le \PP\Big( \sum_{i=\ceil{\tau/\delta}+1}^{\ceil{\tau/\delta}+k_2}\abs{I_{\delta,i}-\bar I_{\delta,i}}> L\Big)+ (k_2-k_1+1)\e^{-z^2/(2L)}.
		\end{align*}
		The inequalities $\Var( I_{\delta,i})\lesssim  T \delta^{-2}$, $(\sum_{i=1}^k\alpha_i)^2\le k\sum_{i=1}^k\alpha_i^2$ and Markov's inequality show that
		\[
		\PP\Big( \sum_{i=\ceil{\tau/\delta}+1}^{\ceil{\tau/\delta}+k_2} \abs{I_{\delta,i}-\bar I_{\delta,i}}> L\Big)\le
		\PP\Big( k_2\sum_{i=\ceil{\tau/\delta}+1}^{\ceil{\tau/\delta}+k_2} (I_{\delta,i}-\E[ I_{\delta,i}])^2> L^2\Big)\lesssim \frac{k_2^2T\delta^{-2}}{L^2}.
		\]
		The term in the upper bound tends to zero for $L=R_L k_2\sqrt T \delta^{-1}$ with $R_L\to\infty$.
		Hence, with this choice of $L$, we find for \begin{equation}\label{eq:z_L}
			z_L=R_L\sqrt{k_2\delta^{-1}\log(\delta^{-1})}
		\end{equation}
		that
		\begin{align*}
			& \PP\Big(\max_{k=k_1,\ldots,k_2}  \sum_{i=\ceil{\tau/\delta}+1}^{\ceil{\tau/\delta}+k}(\bar M_{\delta,i}- M_{\delta,i} +\tfrac\eta2(\bar I_{\delta,i}- I_{\delta,i}))\ge 2z_L+\eta L\Big)\\
			&\lesssim \delta^{-1}\e^{-z_L^2\delta/(2R_Lk_2\sqrt T)}+2R_L^{-2} \underset{R_L \to \infty}{\longrightarrow} 0.
		\end{align*}
		Noting $\eta L\leq \eta\sqrt T R_Lk_2^{1/2} \delta^{-3/2}=o(z_L)$ due to $\eta=o(\delta)$, we conclude
		\begin{equation}\label{eq:van}
			\PP\Big(\max_{k=k_1,\ldots,k_2}  \sum_{i=\ceil{\tau/\delta}+1}^{\ceil{\tau/\delta}+k}(\bar M_{\delta,i} - M_{\delta,i} +\tfrac\eta2( \bar I_{\delta,i}- I_{\delta,i}))\ge 3z_L\Big)\underset{R_L \to \infty}{\longrightarrow} 0.
		\end{equation}
		By our assumption $\eta(1/n) = o(1/n)$, there are (possibly empty) subsequences $(n^{(i)}_k)_{k \in \N}$, $i \in \{1,2\}$, such that $\{n^{(1)}_k: k \in \N\} \cup \{n^{(2)}_k: k \in \N\} = \N$ and
		\[\limsup_{k \to \infty} \eta(1/n^{(1)}_k) \sqrt{\log n_{k}^{(1)}}/n_k^{(1)} = 0, \quad \liminf_{k \to \infty} \eta(1/n^{(2)}_k) \sqrt{\log n_{k}^{(2)}}/n_k^{(2)} > 0,\]
		while still $\eta(1/n^{(i)}_k) = o(1/n^{(i)}_k)$ for $i \in \{1,2\}$, Thus, by analysing along these subsequences if necessary, it is sufficient to consider the two following cases: (a) $\eta=o(\delta/\sqrt{\log(\delta^{-1})})$, and (b) $\eta\gtrsim \delta/\sqrt{\log(\delta^{-1})}$, while still $\eta=o(\delta)$.
		
		\paragraph*{Case (a):}
		Choose $k_1=1$ and $k_2=\delta^{-1}-\ceil{\tau/\delta}$ maximally.
		Combining this with the definition of $z_L$ in \eqref{eq:z_L}, we obtain $z_L \leq R_L\sqrt{\log(\delta^{-1})}\delta^{-1} \leq cR_L \eta^{-1}$ for some $c > 0$.
		Hence, \eqref{eq:van} implies
		\[ \PP\Big(\max_{k=1,\ldots,\delta^{-1} - \ceil{\tau/\delta}}  \sum_{i=\ceil{\tau/\delta}+1}^{\ceil{\tau/\delta}+k} \eta(\bar M_{\delta,i} -  M_{\delta,i}+\tfrac\eta2( \bar I_{\delta,i}- I_{\delta,i}))\ge 3cR_L\Big)\underset{R_L \to \infty}{\longrightarrow} 0.
		\]
		Thus, for any $\varepsilon > 0$, there exists $\kappa = \kappa(\eps) > 1$ such that, for any $\delta \in 1/\N$,
		\[ \PP\Big(\max_{k=1,\ldots,\delta^{-1} - \ceil{\tau/\delta}}  \sum_{i=\ceil{\tau/\delta}+1}^{\ceil{\tau/\delta}+k}  \eta(\bar M_{\delta,i}- M_{\delta,i} +\tfrac\eta2( \bar I_{\delta,i}- I_{\delta,i}))\ge \kappa\Big)\leq \frac\eps4.
		\]
		Using \eqref{eq:tight0} and \eqref{eq:tight1}, we therefore obtain for $R_\eps = R_\eps(\kappa)$ large enough
		\begin{align*}
			&\PP\Big(\sup_{ h>R_\eps} \Big\{-\eta M_{T,\delta}^\tau(h)-\frac{\eta^2}{2} I_{T,\delta}^\tau(h)\Big\} \geq - 1\Big)\\
			&\,\leq \PP\Big(\max_{k=\ceil{(\tau+R_\eps v_\delta)/\delta},\ldots,\delta^{-1}} \sum_{i=\ceil{\tau/\delta}+1}^k\Big( -\eta \bar M_{\delta,i}-\frac{\eta^2}{2} \bar I_{\delta,i}\Big) > -\kappa -1 \Big) \\
			&\,\quad + \PP\Big(\max_{k=1,\ldots,\delta^{-1} - \ceil{\tau/\delta}}  \sum_{i=\ceil{\tau/\delta}+1}^{\ceil{\tau/\delta}+k}   \eta(\bar M_{\delta,i} -  M_{\delta,i} +\tfrac\eta2( \bar I_{\delta,i}- I_{\delta,i}))\ge \kappa\Big) \\
			&\,\leq \eps/2.
		\end{align*}
		
		\paragraph*{Case (b):}
		Choose $k_1 = 1$ and $k_2= \min\{\lfloor R_L\delta^{-1}/\log(\delta^{-1}) \rfloor, \delta^{-1}-\ko\}$.
		Then, $z_L \leq R_L^{3/2}\delta^{-1} \leq cR_L^{3/2}\eta^{-1}$ for some $c > 0$ thanks to $\eta = o(\delta)$.
		From \eqref{eq:van}, we thus obtain
		\[ \PP\Big(\max_{k= k_1,\ldots,k_2}  \sum_{i=\ceil{\tau/\delta}+1}^{\ceil{\tau/\delta}+k}\eta ( M_{\delta,i}-\bar M_{\delta,i}-\tfrac\eta2( I_{\delta,i}-\bar I_{\delta,i}))\ge 3cR_L^{3/2}\Big)\underset{R_L\to \infty}{\longrightarrow} 0.
		\]
		Consequently, arguing as before, given $\varepsilon > 0$, choosing $R_L > 0$ and $R_\eps=R_{\varepsilon}(R_L^{3/2})$ large enough yields
		\begin{align*}
			&\PP\Big(\sup_{ h>R_\eps} \Big\{-\eta M_{T,\delta}^\tau(h)-\frac{\eta^2}{2} I_{T,\delta}^\tau(h)\Big\} \geq - 1\Big)\\
			&\,\leq \PP\Big(\max_{k=\ceil{(\tau+R_\eps v_\delta)/\delta},\ldots,\delta^{-1}} \sum_{i=\ko+1}^k\Big( -\eta \bar M_{\delta,i}-\frac{\eta^2}{2} \bar I_{\delta,i}\Big) > -3cR_L^{3/2}-1 \Big) \\
			&\,\quad + \PP\Big(\max_{k=1,\ldots,k_2}  \sum_{i=\ko+1}^{\ko+k}   \eta(\bar M_{\delta,i} -  M_{\delta,i} +\tfrac\eta2( \bar I_{\delta,i}- I_{\delta,i}))\ge 3cR_L^{3/2}\Big) \\
			&\, \quad + \PP\Big(\max_{k=k_2+1,\ldots,\delta^{-1}}  \sum_{i=\ko+1}^{\ko+k}\eta(-M_{\delta,i}-\tfrac\eta2I_{\delta,i})\ge -1\Big) \one_{\{k_2 < \delta^{-1}-\ko\}}\\
			& \leq  \eps/2 + \PP\Big(\max_{k=k_2+1,\ldots,\delta^{-1}}  \sum_{i=\ko+1}^{\ko+k} \eta(-M_{\delta,i}-\tfrac\eta2I_{\delta,i})\ge -1\Big)\one_{\{k_2 < \delta^{-1}-\ko\}}.
		\end{align*}
		It remains to show that the second term becomes small for any $\delta \in 1/\N$  as $R_L \to \infty$. Assume $k_2 < \delta^{-1} - \ko$.
		Using a union bound, we obtain directly via Girsanov's theorem, for any $L^\prime > 0$,
		\begin{align}\nonumber
			&\PP\Big(\max_{k=k_2+1,\ldots,\delta^{-1}}  \sum_{i=\ko+1}^{\ko+k}\eta(-M_{\delta,i}-\frac\eta2I_{\delta,i})\ge -1\Big)\\\nonumber
			&\le \PP\Big(\frac{\eta^2}{8}\sum_{i=\ko+1}^{\ko+k_2+1}I_{\delta,i}\le L' + 1\Big)+\sum_{k=k_2+1}^{\delta^{-1}} \PP\Big(\exp\Big( \sum_{i=\ko+1}^{\ko+k} (-\frac\eta2M_{\delta,i}-\frac{\eta^2}8I_{\delta,i})
			\Big)\ge \e^{L^\prime}\Big)\\\label{eq:tight2}
			&\le \PP\Big(\frac{\eta^2}{8}\sum_{i=\ko+1}^{\ko+ k_2+1}I_{\delta,i}< L' +1\Big)+
			\delta^{-1}\e^{-L'}.
		\end{align}
		Yet, in order to apply Girsanov, we have to check the Novikov condition
		\[
		\E\Big[\exp\Big(\frac{\eta^2}8 \sum_{i=\ko+1}^{\delta^{-1}} I_{\delta,i}\Big)\Big]<\infty.
		\]
		From Proposition \ref{prop:conc}, we know for some $c>0$, independent of $\delta$,
		\[ \PP\Big(\frac{\eta^2}8 \sum_{i=\ko+1}^{\delta^{-1}} (I_{\delta,i}-\E[I_{\delta,i}])\ge z\Big)\le \exp\Big(-c\eta^{-4}z^2/(\eta^{-1}z+\delta^{-3})\Big)=\e^{-cz^2/(\eta^3z+\eta^4\delta^{-3})}.
		\]
		Since $\eta = o(\delta)$, it therefore follows that, for $\delta$ sufficiently small, the right hand side is $o(\mathrm{e}^{-2z})$, whence, $\tfrac{\eta^2}8 \sum_{i=\ceil{\tau/\delta}+1}^{\delta^{-1}} I_{\delta,i}$ has an exponential moment of order $1$, as needed.
		Having verified the validity of \eqref{eq:tight2}, it remains to bound the first term.
		By Lemma \ref{lem:basic_est}, we have
		\[ \E\Big[\eta^2\sum_{i=\ko+1}^{\ko+k_2+1} I_{\delta,i}\Big]\ge \frac T2 \eta^2 \delta^{-2}(k_2 +1)\norm{K'}^2\lowell+{\mathcal O}(1)\thicksim R_L\eta^2\delta^{-3}/\log(\delta^{-1})
		\]
		and
		\[ \Var\Big(\eta^2\sum_{i=\ko+1}^{\ko+k_2+1} I_{\delta,i}\Big) \lesssim  T \eta^4 \delta^{-2}(k_2+1) \thicksim R_L \eta^4 \delta^{-3}/\log(\delta^{-1}).
		\]
		Hence,  choosing $L' \thicksim R_L\eta^2\delta^{-3}/\log(\delta^{-1})$ in case $k_2 < \delta^{-1} - \ko$, Chebyshev's inequality yields
		\[
		\PP\Big(\frac{\eta^2}{8} \sum_{i= \ko +1}^{\ko+k_2+1} I_{\delta,i} < L^\prime +1 \Big) \lesssim \delta^3 R_L^{-1} \log(\delta^{-1}) \lesssim \e^{-R_L} R_L^{-1},\]
		where we used that $k_2 < \delta^{-1} - \ko$ implies $R_L < \log(\delta^{-1}) +1$ and hence $\delta \lesssim \e^{-R_L}$. Hence, by \eqref{eq:tight2}, using also the fact that in our case (b), $L^\prime \gtrsim R_L \delta^{-1}/(\log(\delta^{-1}))^2$, it follows
		\begin{align*}
			&\PP\Big(\max_{k=k_2+1,\ldots,\delta^{-1}}  \sum_{i=\ko+1}^{\ko+k}\eta(-M_{\delta,i}-\frac\eta2I_{\delta,i})\ge -1\Big)\one_{\{k_2 < \delta^{-1}-\ko\}}\\
			 &\quad\leq \mathrm{e}^{-R_L}R_L^{-1} + \delta^{-1} \mathrm{e}^{-R_L \delta^{-1}/(\log(\delta^{-1}))^2},
		\end{align*}
		and the right hand side converges to $0$ uniformly over $\delta \in 1\slash \N$ as $R_L \to \infty$, as desired.
		Putting everything together, we have proved tightness.
	\end{proof}
\end{appendix}

\paragraph*{Acknowledgements}
MR is grateful for financial funding
by Deutsche Forschungsgemeinschaft (DFG) - SFB1294/2 - 318763901. CS and LT gratefully acknowledge financial support of Carlsberg Foundation Young Researcher Fellowship grant CF20-0640 ``Exploring the potential of nonparametric modelling of complex systems via SPDEs''.

\printbibliography

\end{document}